\newtheorem{theorem}{Theorem}[section]
\newtheorem{lemma}[theorem]{Lemma}
\newtheorem{proposition}[theorem]{Proposition}
\newtheorem{corollary}[theorem]{Corollary}
\newtheorem{claim}[theorem]{Claim}
\theoremstyle{definition}
\newtheorem{definition}[theorem]{Definition}
\newtheorem{problem}[theorem]{Problem}
\newtheorem{example}[theorem]{Example}
\newtheorem{definitions and remarks}[theorem]{Definitions and Remarks}
\theoremstyle{remark}
\newtheorem{remark}[theorem]{Remark}
\newtheorem{notation}[theorem]{Notation}
\numberwithin{equation}{section}
\newcommand{\umon}[1]{\pmb{u}^{\boldsymbol{#1}}}
\newcommand{\xmon}[1]{\pmb{x}^{\boldsymbol{#1}}}
\newcommand{\sm}[1]{{\scriptstyle (#1)}}
\begin{document}
\title[Local Resolution of Ideals Subordinated to a Foliation]{Local Resolution of Ideals Subordinated to a Foliation}

\author[A.~Belotto]{Andr\'e Belotto da Silva}
\address{University of Toronto, Department of Mathematics, 40 St. George Street,
Toronto, ON, Canada M5S 2E4}
\email[A.~Belotto]{andre.belottodasilva@utoronto.ca}
%\thanks{}

%\subjclass{}

\keywords{Foliation, Resolution of singularities, Monomial}

\maketitle

\section*{Abstract}
Let $M$ be a complex- or real-analytic manifold, $\theta$ be a singular distribution and $\mathcal{I}$ a coherent ideal sheaf defined on $M$. We prove the existence of a local resolution of singularities of $\mathcal{I}$ that preserves the class of singularities of $\theta$, under the hypothesis that the considered class of singularities is invariant by $\theta$-admissible blowings-up. In particular, if $\theta$ is monomial, we prove the existence of a local resolution of singularities of $\mathcal{I}$ that preserves the monomiality of the singular distribution $\theta$.
\setcounter{tocdepth}{3}
%\tableofcontents
%
\section{Introduction}
The subject of this article is resolution of singularities of an analytic ideal or variety which preserve the ``class'' of singularities of an ambient foliation. Let $M$ be a complex- or real-analytic manifold, $\mathcal{I}$ be a coherent and everywhere non-zero \emph{ideal sheaf}, $E$ be a \emph{simple normal crossing divisor} (i.e., an ordered collection $E = (E^{(1)},...,E^{(l)})$, where each $E^{(i)}$ is a smooth divisor on $M$ such that $\sum_i E^{(i)}$ is a reduced divisor with simple normal crossings) and $\theta$ be an \emph{involutive singular distribution} tangent to $E$ (i.e a coherent sub sheaf of the sheaf of vector fields over $M$ tangent to $E$, denoted by $Der_M(-logE)$, such that for each point $p$ in $M$, the stalk $\theta \cdot \mathcal{O}_p$ is closed under the Lie bracket operation). Note that $\theta$ generates a singular foliation over $M$ (by the Stefan-Sussmann Theorem \cite{Ste,Suss}). The triple $(M,\theta,E)$ will be called a \textit{foliated manifold} and the quadruple $(M,\theta, \mathcal{I},E)$ a \textit{foliated ideal sheaf}.
 
In here, a class of singularities (satisfying a property) $P$ stands for a collection of singular distributions whose singularities satisfy property $P$. For example, we say that $\theta$ is in the class of log-canonical singularities, if all singularities of $\theta$ are log-canonical (introduced by McQuillan in \cite{Mc}).

\begin{problem}
\label{prb:main}
To find a resolution of singularities of $\mathcal{I}$ (i.e. a composite of blowings-up $\sigma: \widetilde{M} \to M$ such that the pull-back $\widetilde{\mathcal{I}}$ of $\mathcal{I}$ is principal with support contained in a SNC divisor $\widetilde{E}$) such that the transform $\widetilde{\theta}$ of the ambient singular distribution $\theta$ (defined as the strict transform of $\theta$ intersected with $Der(-log\widetilde{E})$) is in the same class of singularities of $\theta$ (e.g. log-canonical, simple, monomial - we need to specify the class of singularities we are interested in). To simplify notation, we will denote the composition of blowings-up as $\sigma: (\widetilde{M},\widetilde{\theta}, \widetilde{\mathcal{I}},\widetilde{E}) \to (M,\theta, \mathcal{I},E)$.
\end{problem}
\begin{remark}
In general, given a class of singularities of $\theta$, blowings-up will not preserve this class of singularities. Indeed, over a three dimensional manifold, consider $\mathcal{I} = (y,z)$ and the log-canonical (i.e \emph{elementary} by \cite[Fact I.ii.4]{McP}) foliation $\theta = (\partial_x + x\partial_z)$. Then, the transform of $\theta$ by the blowing-up with center $V(y,z)$ (which principalize $\mathcal{I}$) is not log-canonical (see example \ref{ex:1} below).
\label{rk:ex1}
\end{remark}
In this manuscript we consider in details \emph{$\mathcal{K}$-monomial singular distributions}, defined in section \ref{ssec:Mon}, where $\mathcal{K}$ is a field between $\mathbb{Q}$ and the base field $\mathbb{K} =\mathbb{R}$ or $\mathbb{C}$. The foliation associated to it corresponds locally to the level curves of a monomial map (see \cite[Definition 1.1]{Cut1} for a definition of monomial maps), i.e. of a map whose output are monomials, with exponents in $\mathcal{K}$, in a special choice of coordinate system (see definition \ref{def:MSD} and Lemma \ref{lem:Fi}). This is possibly the smallest class of singularities where the problem \ref{prb:main} has a positive answer. Note that $\mathcal{K}$-monomial singular distributions are log-canonical singular distributions, moreover, and are useful in the study of first integrals and families of ideal sheaves and vector fields. In particular, any analytic singular foliation generated by a system of first integrals can be reduced, at least locally, to a $\mathbb{Q}$-monomial singular distribution (\cite{Bel2}, c.f \cite{Cut1}).

In \cite{BeloT,Bel1}, problem \ref{prb:main} is solved for $\mathcal{K}$-monomial and log-canonical singular distributions, under the additional hypothesis that $\theta$ has leaf dimension $1$ (i.e. when all leaves of the associated foliation have dimension smaller or equal to $1$) or $\mathcal{I}$ is invariant by $\theta$. In this manuscript, we prove that problem \ref{prb:main} can always be solved locally, i.e. we allow sequences of \emph{local blowings-up}, which is the composition of a blowing-up with an open immersion. More precisely, in the case of $\mathcal{K}$-monomial singular distributions, we prove the following:
\begin{theorem}[Main Theorem 1]\label{thm:main}
Let $(M,\theta,\mathcal{I},E)$ be a foliated ideal sheaf where $\theta$ is $\mathcal{K}$-monomial. Then, for every compact set $K$ in $M$, there exists a finite collection of morphisms $\tau_i : (M_i,\theta_i,\mathcal{I}_i,E_i) \rightarrow (M,\theta,\mathcal{I},E)$ such that:
\begin{itemize}
\item The pull-back of $\mathcal{I}$ is a principal ideal sheaf with support in $E_i$;
\item The singular distribution $\theta_i$ is $\mathcal{K}$-monomial for all $i$.
\item The morphism $\tau_i$ is a finite composition of admissible \emph{local} blowing-ups. 
\item There exists a compact set $K_i \subset M_i$ such that $\bigcup \tau_i(K_i)$ is a compact neighborhood of $K$.
\end{itemize}
\end{theorem}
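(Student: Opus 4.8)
The plan is to reduce the statement to a local, and ultimately combinatorial, problem, and then to run a resolution algorithm subordinated to the monomial structure of $\theta$. Since the conclusion only asks for $\bigcup_i \tau_i(K_i)$ to be a \emph{neighborhood} of the compact set $K$, it is enough to prove that for each $p \in M$ there exist an open neighborhood $U_p$ of $p$ and a finite family of sequences of $\theta$-admissible local blowings-up over $U_p$ that make the pull-back of $\mathcal{I}$ principal with support in the divisor, keep the transform of $\theta$ $\mathcal{K}$-monomial, and have images covering a neighborhood of $p$; extracting a finite subcover of $K$ then gives the theorem. This is exactly where passing to \emph{local} blowings-up --- allowing open immersions, hence different centers over different charts --- is indispensable: as Remark~\ref{rk:ex1} shows, a globally chosen center principalizing $\mathcal{I}$ need not respect the class of $\theta$.

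\emph{Normal form of $\theta$ and admissible centers.} Near $p$ I would invoke Lemma~\ref{lem:Fi} to fix coordinates $(\mathbf{x},\mathbf{y})$ in which $E$ is a union of coordinate hyperplanes and $\theta$ is the singular distribution tangent to the fibres of a monomial map in the variables $\mathbf{x}$ with exponents in $\mathcal{K}$; in particular $\theta$ is ``trivial'' in the $\mathbf{y}$-directions and toric in the $\mathbf{x}$-directions, so the $\theta$-admissible centers are, after this coordinate choice, the intersections of coordinate hyperplanes compatible with $E$ and with the multiplicative part of $\theta$, enlarged by arbitrary smooth SNC centers in the remaining ``free'' directions. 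The bookkeeping to be checked here is that blowing up such a center again produces a $\mathcal{K}$-monomial distribution in adapted coordinates: the new monomial exponents are obtained from the old ones by the rational linear maps attached to the blowing-up, so they still lie in $\mathcal{K}$ because $\mathbb{Q}\subseteq\mathcal{K}$. This is the monomial instance of the hypothesis ``the class is invariant by $\theta$-admissible blowings-up'' mentioned in the abstract.

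\emph{Principalizing $\mathcal{I}$ and the main obstacle.} With the admissible centers pinned down, I would run a Hironaka-type resolution algorithm, modified so that every center it selects is $\theta$-admissible, by induction on $\dim M$. In the ``free'' directions one disposes of $\theta$-invariant maximal-contact hypersurfaces (the coordinate hyperplanes, and more generally hypersurfaces adapted to the monomial structure), which lowers the dimension while keeping all centers admissible; the ``monomial part'' of the ideal surviving each inductive step is a monomial ideal in the multiplicative variables $\mathbf{x}$, which is principalized by a finite sequence of toric --- hence $\theta$-admissible --- blowings-up realizing a subdivision of its Newton polyhedron, again with exponents staying in $\mathcal{K}$; this last point is a monomialization statement in the spirit of~\cite{Cut1}. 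Throughout, one shrinks $U_p$ and splits into the finitely many relevant combinatorial cases (which variables are multiplicative on a given chart, which components of $E$ it meets), so one genuinely obtains a finite family of sequences of \emph{local} blowings-up. The hard part will be exactly this reconciliation: proving that the resolution invariant can always be decreased using only $\theta$-admissible centers, i.e.\ that the monomial structure of $\theta$ supplies enough invariant maximal-contact hypersurfaces and that its multiplicative part is compatible with the monomialization step. Once this local construction is performed for the points of a finite cover of $K$, the theorem follows from the first reduction.
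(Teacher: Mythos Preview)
Your reduction to a point and your recognition that $\theta$-admissible blowings-up preserve $\mathcal{K}$-monomiality (the paper's Proposition~\ref{prop:AdmCenter}) are both correct and match the paper. The genuine gap is in your principalization step: you propose to run a modified Hironaka algorithm using ``$\theta$-invariant maximal-contact hypersurfaces in the free directions'' together with a toric step on the monomial part, and you flag the reconciliation as the hard part --- but this reconciliation actually fails as stated. The standard order of $\mathcal{I}$ is unrelated to $\theta$, and a maximal-contact hypersurface for it is typically \emph{not} $\theta$-admissible (nor $\theta$-invariant: a hypersurface invariant under the regular generators $\partial_{w_i}$ would have to be independent of the free variables, hence useless as maximal contact for a general $\mathcal{I}$). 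The paper says this explicitly in \S\ref{ssec:Over}: ``the tg-order has no surface of maximal contact associated to it, i.e.\ we can not use the standard ideas of Hironaka.'' Worse, Example~\ref{ex:main} shows that even sticking to $\theta$-admissible centers the natural invariant can \emph{increase}, so there is no straightforward way to force a Hironaka-type invariant down along admissible centers.

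What the paper does instead is introduce a different invariant, the \emph{tg-order} $\nu$ of Definition~\ref{def:inv}, which measures the order of tangency between $\mathcal{I}$ and $\theta$ via the chain $H(\theta,\mathcal{I},n)$. The proof is an induction on $(\dim M,\nu)$ in three steps (Propositions~\ref{prop:Hia}, \ref{prop:prepnormal}, \ref{prop:Dropping}): first reduce to type~$1$ using functoriality of ordinary resolution applied to the $\theta$-invariant ideal $H(\theta,\mathcal{I},\nu)$; then --- and this is the idea your outline is missing --- \emph{prepare} by blowing up $\theta$-admissible centers that are \emph{not} contained in the support of $\mathcal{I}$, chosen via the induction on $\dim M$ applied to the coefficients $a_{i,j}$ of a Weierstrass--Tschirnhaus form along a distinguished $\partial_v\in\theta$; only after this preparation does a combinatorial/toric sequence (your final step) actually drop $\nu$. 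The use of centers outside $V(\mathcal{I})$ to emulate a nonexistent maximal-contact hypersurface, and the fact that this forces the argument to be genuinely local (the direction $v$ need not globalize), are the substantive points your proposal does not supply.
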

\begin{remark}
See Theorem \ref{thm:main2} below for a version valid for more general classes of singularities. 

\end{remark}

One of the interests of this result is for reduction of singular foliations in higher dimensions (see \cite{Can,McP,Pan2} for results in reduction of singularities of foliations in dimension three). For example, the above theorem is used in \cite{Bel2} in order to prove that a foliation which is generated by a system of first integrals can be locally reduced to a monomial singular foliation (c.f. local monomialization of analytic morphisms via etoiles \cite{Cut1}). Theorem \ref{thm:main} may be useful, moreover, in the study of \emph{equiresolution} and \emph{resolution in families}, since a family structure generates a $\mathbb{Q}$-monomial foliation.
\subsection{Overview of the proof}
\label{ssec:Over}
The originality of this result comes from the fact that the desired resolution is not given by a standard resolution of singularities (e.g. \cite{BM,Vil4}). Let us consider in detail the example given in Remark $\ref{rk:ex1}$:
\begin{example}
\label{ex:1}
Consider a three dimensional regular variety $M = \mathbb{C}^3$, with globally defined coordinates $(x,y,z)$, and let $\mathcal{I}=(y,z)$ and $\theta = (\partial_x + x\partial_z)$, which is a regular singular distribution and therefore log-canonical (and monomial). The standard algorithms of resolution of singularities will demand us to blow-up with center $\mathcal{C}=V(y,z)$. As a consequence, in the $y$-chart $(x,y,z)=(\widetilde{x},\widetilde{y},\widetilde{y}\widetilde{z})$, the transform of $\theta$ is generated by $\widetilde{y}\partial_{\widetilde{x}} + \widetilde{x}\partial_{\widetilde{z}} $. Note that the linear part of this vector field is nilpotent and therefore $\theta$ is not log-canonical by \cite[Fact I.ii.4]{McP} (in particular, it is not monomial).\\
\\
In order to resolve the singularities of $\mathcal{I}$ and preserve the log-canonical (or monomial) class of singularities of $\theta$, we must first blow-up the origin. In the $x$-chart $(x,y,z)=(\widetilde{x},\widetilde{x}\widetilde{y},\widetilde{x}\widetilde{z})$, the transform of $\theta$ is generated by:
\[
\widetilde{x}\partial_{\widetilde{x}} -\widetilde{x} \widetilde{y}  \partial_{\widetilde{y}} + \widetilde{x}(1 - \widetilde{z} ) \partial_{\widetilde{z}}
\]
which is log-canonical (the linear part in not nilpotent). We must blow-up once again the origin of this chart, before blowing-up the regular curve given by $V(\widetilde{y},\widetilde{z})$, so to guarantee that the transforms of $\theta$ have only log-canonical singularities through the process (i.e. in order to preserve the class of singularities of $\theta$).
\end{example}
The example shows that we should impose some restriction to the centers of blowing up. In section \ref{sec:tadm}, we give the definition a notion of $\theta$\textit{-admissible} blowings-up (see definition \ref{def:tadm}) which is first introduced in \cite{BeloT} (c.f the one leaf-dimensional case \cite[Section 2.3]{Bel1}). We show that $\theta$-admissible blowings-up preserve the class of $\mathcal{K}$-monomial singularities (Proposition \ref{prop:AdmCenter}). As a consequence, we only need to find a resolution of singularities of $\mathcal{I}$ by $\theta$-admissible blowings-up. Indeed, we will prove the following more general result:
\begin{theorem}[Main Theorem 2]\label{thm:main2}
Let $(M,\theta,\mathcal{I},E)$ be a foliated ideal sheaf where the singularities of $\theta$ are of certain class which are also log-canonical. Suppose that this class of singularities is invariant by $\theta$-admissible blowings-up, i.e. an analogous of Proposition \ref{prop:AdmCenter} is valid. Then, for every compact set $K$ in $M$, there exists a finite collection of morphisms $\tau_i : (M_i,\theta_i,\mathcal{I}_i,E_i) \rightarrow (M,\theta,\mathcal{I},E)$ such that:
\begin{itemize}
\item The pull-back of $\mathcal{I}$ is a principal ideal sheaf with support in $E_i$;
\item The singular distribution $\theta_i$ is in the same class of singularities as $\theta$.
\item The morphism $\tau_i$ is a finite composition of $\theta$-admissible \emph{local} blowing-ups (in particular, all intermediate transforms of $\theta$ are in the same class of singularities as $\theta$). 
\item There exists a compact set $K_i \subset M_i$ such that $\bigcup \tau_i(K_i)$ is a compact neighborhood of $K$.
\end{itemize}
\end{theorem}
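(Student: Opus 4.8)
The plan is to deduce Main Theorem~2 from a valuation-local resolution statement together with the assumed invariance of the class of singularities under $\theta$-admissible blowings-up.

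\emph{Reduction to a single valuation.} By the hypothesised analogue of Proposition~\ref{prop:AdmCenter}, any finite composition of $\theta$-admissible local blowings-up keeps $\theta$ (and all its intermediate transforms) in the prescribed class; hence the second and third bullet points are automatic, and only the principalization of $\mathcal{I}$ has to be realised by such a composition. To obtain the finite collection of morphisms whose images cover a compact neighbourhood of $K$, I would argue one valuation at a time: by quasi-compactness of the Zariski--Riemann space of $M$ restricted to the valuations centered in $K$, it suffices to prove that for each such valuation $\nu$ there is a finite sequence of $\theta$-admissible local blowings-up, each chart chosen so as to contain the center of $\nu$, after which $\mathcal{I}$ is principal with support in a simple normal crossing divisor $\widetilde{E}$ (the transform of $E$ together with the exceptional divisors). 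A neighbourhood of $\nu$ in the Zariski--Riemann space then admits the same sequence; passing to a finite subcover produces the $\tau_i$ and compacts $K_i\subset M_i$ with $\bigcup\tau_i(K_i)$ a compact neighbourhood of $K$. It is precisely this step, and the resulting necessity of passing to the chart containing the center of $\nu$ at each stage, that forces us to admit \emph{local} rather than global blowings-up, exactly as in Cutkosky's local monomialization of morphisms \cite{Cut1}.

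\emph{Normal form and elimination of the leaf directions.} Fix $\nu$, with center $p$. Near $p$ I would use that the class of singularities of $\theta$ is contained in the log-canonical (elementary) one, together with Lemma~\ref{lem:Fi}, to choose coordinates $(x_1,\dots,x_n)$ adapted to $E=\{x_1\cdots x_k=0\}$ in which the foliation of $\theta$ is the foliation by fibres of a monomial map $\phi$ with components $\mathbf{x}^{\alpha_1},\dots,\mathbf{x}^{\alpha_r}$, the $\alpha_j$ supported on the $E$-variables $x_1,\dots,x_m$, and in which $\theta$ is generated by $\partial_{x_{m+1}},\dots,\partial_{x_n}$ (the ``leaf directions'') together with logarithmic monomial vector fields in $x_1,\dots,x_m$. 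Using $\partial_{x_{m+1}},\dots,\partial_{x_n}\in\theta$, I would pass from $\mathcal{I}$ to the ideal $\mathcal{H}$ generated by $\mathcal{I}$ and all of its iterated derivatives along these vector fields: $\mathcal{H}$ is finitely generated, resolving it yields a resolution of $\mathcal{I}$ after a controlled number of further $\theta$-admissible blowings-up (the one-leaf-dimensional instance of this comparison being carried out in \cite{Bel1,BeloT}), and, being closed under $\partial_{x_{m+1}},\dots,\partial_{x_n}$, it is cylindrical over these directions: up to an exceptional monomial it is generated by functions of $x_1,\dots,x_m$ alone, that is, by an ideal $\mathcal{J}$ on the slice $\{x_{m+1}=\dots=x_n=0\}$, where $\theta$ restricts to the monomial fibration $\phi$.

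\emph{Monomial resolution in the toric variables.} It remains to resolve $\mathcal{J}$ compatibly with $\phi$. Here I would invoke local monomialization of the monomial morphism $\phi$ along $\nu$, in the sense of \cite{Cut1}: after finitely many blowings-up whose centers are unions of fibres of $\phi$ --- hence $\theta$-invariant, and, being coordinate subspaces of $E$ in the relevant chart, in normal crossings with $E$, hence $\theta$-admissible by Definition~\ref{def:tadm} --- one reaches the situation in which $\mathcal{J}$ is a monomial ideal in $x_1,\dots,x_m$. A Newton-polyhedron subdivision then principalizes $\mathcal{J}$ by blowings-up of coordinate subspaces contained in $E$, each of which is again $\theta$-admissible because monomial logarithmic vector fields are tangent to coordinate subspaces of $E$. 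Reversing the reduction of the previous paragraph, $\mathcal{I}$ becomes principal with support in a simple normal crossing divisor; this gives the local resolution along $\nu$, and by the first paragraph the theorem follows.

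\emph{The main obstacle.} The crux is the bookkeeping that makes the two monomialization steps cohere, and in particular the construction of a well-ordered resolution invariant for the foliated ideal sheaf $(M,\theta,\mathcal{I},E)$ which (i) strictly decreases under each of the $\theta$-admissible local blowings-up prescribed above, (ii) controls the passage $\mathcal{I}\rightsquigarrow\mathcal{H}$ to the leaf-direction derivative closure and the subsequent inductive descent, and (iii) is compatible with the valuation-theoretic reduction, so that the process terminates along every $\nu$. A secondary but pervasive point is to check at each stage that the prescribed centers are $\theta$-admissible in the full sense of Definition~\ref{def:tadm} --- adapted to $\theta$ and in normal crossings with the \emph{accumulated} exceptional divisor, not merely set-theoretically invariant by $\theta$ --- for which the monomial normal form of $\theta$ and the invariance statement (the analogue of Proposition~\ref{prop:AdmCenter}) are used repeatedly.
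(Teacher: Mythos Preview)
Your proposal has two genuine gaps.

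First, the normal-form step invokes Lemma~\ref{lem:Fi} to obtain monomial first integrals and a monomial basis for $\theta$, but that lemma characterises $\mathcal{K}$-monomial distributions specifically. Theorem~\ref{thm:main2} is stated for an arbitrary class of log-canonical singularities closed under $\theta$-admissible blowings-up; a log-canonical $\theta$ need not admit any such monomial description. So even if completed, your argument would establish at most Theorem~\ref{thm:main}, not Theorem~\ref{thm:main2}. The paper proceeds in the opposite direction: it proves Theorem~\ref{thm:main2} using only the log-canonical hypothesis (needed exactly once, in Lemma~\ref{lem:Hir1}) and then deduces Theorem~\ref{thm:main} via Proposition~\ref{prop:AdmCenter}.

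Second, and more seriously, your passage from $\mathcal{I}$ to the derivative closure $\mathcal{H}$ along the regular leaf directions collapses in the generic case. If some generator of $\mathcal{I}$ contains a term $x_j^{\nu}\cdot(\text{unit})$ with $j>m$ --- this is exactly the paper's ``type~$1$'' situation of Definition~\ref{def:inv} --- then after $\nu$ derivatives $\mathcal{H}$ contains a unit, so $\mathcal{H}=\mathcal{O}_M$ and the slice ideal $\mathcal{J}$ is trivial. Resolving $\mathcal{J}$ is then vacuous and carries no information about $\mathcal{I}$. Your sentence ``resolving it yields a resolution of $\mathcal{I}$ after a controlled number of further $\theta$-admissible blowings-up'' is precisely the content of Propositions~\ref{prop:prepnormal} and~\ref{prop:Dropping} and is not supplied by anything in your outline; the reference to \cite{Bel1,BeloT} does not help, since even in leaf-dimension one this is the substantive step. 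The paper's mechanism is different: it uses the tg-order $\nu$ and, in type~$1$, the Weierstrass--Tschirnhaus form of Lemma~\ref{lem:BasicNormalForm}; it then \emph{prepares} the coefficients $a_{i,j}$ by an inductive resolution on the $v$-transverse slice (Proposition~\ref{prop:prepnormal}) --- with centers that are $\partial_v$-invariant rather than contained in the support of $\mathcal{I}$ --- and only afterwards decreases $\nu$ by combinatorial blowings-up in the $(\pmb{u},v)$-variables (Proposition~\ref{prop:Dropping}). Example~\ref{ex:main} shows that without this preparation $\nu$ can strictly increase under $\theta$-admissible blowings-up, so the invariant you seek in item~(i) of your obstacle paragraph cannot be built from the derivative closure alone. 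The Zariski--Riemann compactness reduction is fine as a covering device, but the local step along each valuation still has to confront this type-$1$ phenomenon, and your proposal does not.
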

\begin{remark}
In what follows, we prove the theorem in the case that the compact set $K$ is a point $p \in M$. The result for more general compact sets easily follows.
\end{remark}
Standard resolution of singularities algorithms of $\mathcal{I}$ are based on decreasing the order of $\mathcal{I}$ (see \cite{Ko} and references therein). Unfortunately, the order of $\mathcal{I}$ has no relation with the singular distribution and, therefore, is not fit for problem \ref{prb:main}. In section \ref{sec:CIS}, we give the definition of an invariant $\nu$ (first introduced in \cite{BeloT,Bel1}), which we call \emph{tg-order} (see definition \ref{def:inv}). This invariant measures the order of tangency between an ideal sheaf $\mathcal{I}$ and a singular distribution $\theta$, even if the objects are singular.

The proof of Theorem \ref{thm:main2} follows by induction on the tg-order $\nu$. The induction has three main steps (as in \cite{AB} and inspired by \cite{Cut2}) which are presented in details in section \ref{sec:over} and proved in sections \ref{sec:type}, \ref{sec:prep} and \ref{sec:Drop}. Technically, the main difficulty is that the tg-order has no \emph{surface of maximal contact} associated to it, i.e. we can not use the standard ideas of Hironaka in order to argue by induction on the dimension of $M$. This comes as no surprise since we deal with singular foliations, where surfaces of maximal contact are rare. The same technical difficulty appears in the case when $\theta$ has leaf dimension one (i.e. all leaves have dimension smaller or equal to one), where the Problem \ref{prb:main} admits a global solution by \cite{Bel1}.

Nevertheless, the problem in general is more delicate since the invariant $\nu$ may \emph{increase} after $\theta$-admissible blowings-up (see example \ref{ex:main}). If the leaf dimension of $\theta$ is one the invariant $\nu$ does not increase since the transform of $\theta$ under $\theta$-admissible blowings-up follows a well-behaved dichotomy (see \cite[section 2.3]{Bel1}). In \cite{Bel1}, the proof is divided in two steps which explore this dichotomy in order to control $\nu$. In the case of a higher leaf-dimension, the transform of $\theta$ does not follow this dichotomy (see example \ref{ex:Kinds}(3)) and the invariant $\nu$ may increase.

In order to deal with this extra difficulty we blow-up centers which are \emph{not} necessarily contained in the support of $\mathcal{I}$ (as in \cite{Cut2}). This allow us to emulate the existence of a surface of maximal contact and to control the invariant $\nu$. The price is that we need to choose an special direction, which means that the algorithm is only local instead of global. Nevertheless, it is worth remarking that our algorithm can be globalized when the dimension of $M$ is smaller or equal to three (c.f. \cite{AB,Cut2}).

\begin{example}
We assume that the reader is familiar with definition \ref{def:inv} (of the pair of invariant $(\nu,type)$). Let $n \in \mathbb{N}$ be bigger then $2$ and consider: the three dimensional manifold $\mathbb{C}^3$ with coordinates $(x,y,z)$, the ideal sheaf $\mathcal{I} =(y^2 +x z^n+x^{n+1})$ and the involutive singular distribution $\theta = (\partial_y,\partial_z)$. Computing the tangency sequence of $(\theta,\mathcal{I})$ give us:
\[
H(\theta,\mathcal{I},1) = (y,x  z^{n-1},x^{n+1}), \quad H(\theta,\mathcal{I},2) = \mathcal{O}_M
\] 
which implies that: $type=1$ everywhere; $\nu\leq 2$ in every point in $M$; $\nu =2$ over the variety $\mathcal{C} = V(x,y)$. Let $\sigma : (\widetilde{M},\widetilde{\theta},\widetilde{\mathcal{I}},\widetilde{E}) \to (M,\theta,\mathcal{I},E)$ be the blowing-up with center $\mathcal{C}$. Note that this is a $\theta$-admissible blowing-up (see Definition \ref{def:tadm} and compare with example \ref{ex:Kinds}(3)). Consider the $x$-chart $(x,y,z) = (\tilde{x},\tilde{x}\tilde{y},\tilde{z})$, where $\widetilde{E}$ is locally generated by $(\widetilde{x}=0)$. Then:
\[
\widetilde{\mathcal{I}} = \tilde{x} \, (\tilde{y}^2\tilde{x} + \tilde{z}^n + \tilde{x}^n), \quad \widetilde{\theta} = (\partial_{\widetilde{y}}, \partial_{\widetilde{z}})
\]
which implies that the invariant $\nu$ has increased since it is equal to $n$ along the variety $V(\widetilde{x},\widetilde{z})$.

Let us briefly indicate how our algorithm would treat this example. We first blow-up with center $V(x,z)$ (instead of $V(x,y)$) in order to obtain, what we call, a \emph{prepared normal form} (see Proposition \ref{prop:prepnormal}). For instance, in the $x$-chart $(x,y,z) = (\tilde{x},\tilde{y},\tilde{x}\tilde{z})$ (where $\widetilde{E} = (\tilde{x}=0)$) we obtain:
\[
\widetilde{\mathcal{I}} = (\tilde{y}^2 + \tilde{x}^{n+1}(1+\tilde{z}^n)), \quad \widetilde{\theta} = (\partial_{\widetilde{y}}, \partial_{\widetilde{z}})
\]
Note that the center $V(x,z)$ leaves $y$ invariant and, therefore, the tangency order $\nu$ does not increase. Nevertheless, this center does not need to be global. Once in prepared normal form, we are able to control the invariant $\nu$ by sequences of blowings-up (see Proposition \ref{prop:Dropping}). In this example, we take a sequence of blowings-up which principalize $(\tilde{y}^2, \tilde{x}^{n+1})$ in order to decrease the invariant $\nu$.
\label{ex:main}
\end{example}
\section{Notation and background}
We consider a foliated manifold $(M,\theta,E)$ and a foliated ideal sheaf $(M,\theta, \mathcal{I},E)$. 
\subsection{Singular distributions} Let $Der_M$ denote the sheaf of analytic vector fields on $M$, i.e. the sheaf of analytic sections of $TM$. An {\em involutive singular distribution} is a coherent sub sheaf $\theta$ of $Der_M$ such that for each point $p$ in $M$, the stalk $\theta_p:=\theta \cdot \mathcal{O}_p$ is closed under the Lie bracket operation. Consider the quotient sheaf $Q = Der_M/ \theta$.  The {\em singular set} of $\theta$ is defined by the closed analytic subset $S(\theta) = \{p \in M : Q_p \text{ is not a free $\mathcal{O}_p$ module}\}$. A singular distribution $\theta$ is called \textit{regular} if $S(\theta)=\emptyset$. On $M \setminus S(\theta)$ there exists an unique analytic subbundle $L$ of $TM |_{ M\setminus S(\theta)}$ such that $\theta$ is the sheaf of analytic sections of $L$. We assume that the dimension of the $\mathbb{K}$ vector space $L_p$ is the same for all points $p$ in $M \setminus S$ (this always holds if $M$ is connected). It is called the {\em leaf dimension} of $\theta$ and denoted by $d$. In this case $\theta$ is called an involutive \textit{$d$-singular distribution}.

Let $Der_M(-logE)$ denote the coherent sub sheaf of $Der_M$ which contains all derivations tangent to $E$, i.e. given a point $p$, a derivation $\partial$ is in $Der_M(-logE)\cdot \mathcal{O}_p$ if and only if $\partial[\mathcal{I}_{E}] \subset \mathcal{I}_{E}$ (where $\mathcal{I}_E$ is the reduced principal ideal whose support is $E$). A singular distribution $\theta$ which is also a sub sheaf of $Der_M(-logE)$ is said to be tangent to $E$.
\subsection{Blowings-up and local resolution of singularities} We follow \cite{Ko}. A blowing-up $\sigma: \widetilde{M} \to M$ is said to be \emph{admissible} if the center of blowing-up $\mathcal{C}$ has normal crossings with $E$. In this case, we denote the blowing-up by $\sigma : (\widetilde{M},\widetilde{E}) \to (M,E)$, where $\widetilde{E}$ is the union of the inverse-image of $E$ with the exceptional divisor $F$ of the blowing-up. We denote by $\mathcal{I}_{F}$ the reduced ideal sheaf whose support is the divisor $F$.

Given an ideal sheaf $\mathcal{I}\subset \mathcal{O}_M$, we define the \emph{total transform} of $\mathcal{I}$ as the ideal sheaf $\mathcal{I}^{\ast}:=\sigma^{\ast}\mathcal{I}$. If the center of blowing-up is contained in the support of $\mathcal{I}$, we say that the blowing-up is of \emph{order $\geq1$}. In this case, the \emph{birational transform} $\mathcal{I}^c$ of the ideal sheaf $\mathcal{I}$ is the ideal sheaf that satisfies the equality $\mathcal{I}^{\ast} = \mathcal{I}_{F} \cdot \mathcal{I}^c$, which we will also denote by $\mathcal{I}^{c}=\mathcal{I}_{F}^{-1} \cdot \mathcal{I}^{\ast}$ (where $\mathcal{I}_{F}^{-1} $ stands for the $\mathcal{O}_M$ sub-sheaf of the meromorphic functions whose poles vanish over $F$). In other words, $\mathcal{I}^c$ is locally generated by functions of the form $x^{-1} f^{\ast}$, where $f \in \mathcal{I}$ and $(x=0)$ is a local equation of the divisor $F$.
\begin{remark}
The definition of birational transform we use corresponds to the birational transform of the marked ideal $(\mathcal{I},1)$ (see \cite[Def 3.60]{Ko}).
\end{remark}
We can now write an admissible blowing-up as $
\sigma: (\widetilde{M}, \widetilde{\mathcal{I}},\widetilde{E}) \to (M,\mathcal{I},E)$ where: either $\sigma$ is of order $\geq1$ and the ideal sheaf $\widetilde{\mathcal{I}}$ is the birational transform $\mathcal{I}^c$; or the ideal sheaf $\widetilde{\mathcal{I}}$ is the total transform $\mathcal{I}$. We need this dichotomy since we blow up centers outside of the support of $\mathcal{I}$ (see subsection \ref{ssec:Over}).

An admissible \textit{local} blowing-up $\tau: (\widetilde{M},\widetilde{\mathcal{I}},\widetilde{E}) \to (M,\mathcal{I},E)$ is the composition of an admissible blowing with an open immersion (e.g. a chart of the blowing-up). A \emph{sequence of local blowings-up} is a sequence of morphisms
\[
 \begin{tikzpicture}
  \matrix (m) [matrix of math nodes,row sep=3em,column sep=3em,minimum width=1em]
  {(M_r,\mathcal{I}_r,E_r) & \cdots & (M_0,\mathcal{I}_0,E_0)\\};
  \path[-stealth]
    (m-1-1) edge node [above] {$\tau_r$} (m-1-2)
    (m-1-2) edge node [above] {$\tau_1$} (m-1-3);
\end{tikzpicture}
\]
where each morphism is an admissible local blowing-up and the transforms are defined in the usual way (see \cite[Def 3.66]{Ko}). Finally, a local resolution of $\mathcal{I}$ at a point $p$ of $M$, is a finite collection of morphisms $\tau_i : (M_i,\mathcal{I}_i,E_i) \rightarrow (M,\mathcal{I},E)$ such that:
\begin{enumerate}
\item The ideal sheaf $\mathcal{I}_i = \mathcal{O}_{M_i}$, i.e. $\mathcal{I}_i$ is everywhere locally generated by units.
 \item The morphism $\tau_i$ is a finite composition of admissible local blowing-ups. 
 \item There exists a compact set $K_i \subset M_i$ such that $\bigcup \tau_i(K_i)$ is a compact neighborhood of $p$.
\end{enumerate}

\begin{remark}
In our Theorems $\ref{thm:main}$ and $\ref{thm:main2}$ we prove the existence of a local resolution of $\mathcal{I}$ at a point $p$ of $M$ (and therefore over any compact of $M$). Note that the total transform $\tau_i^{\ast}(\mathcal{I})$ is a principal ideal sheaf with support in $E_i$. Indeed, the total transform $\tau_i^{\ast}(\mathcal{I})$ is given by the product of ideal sheaves $\mathcal{I}_{F}$ and their pull-backs, i.e. it is locally generated by a monomial with support in the exceptional divisor $E_i$.
\end{remark}

\subsection{Blowing-up of a singular distributions}
Given a singular distribution $\theta\subset Der_M(-logE)$ and an admissible blowing-up $\sigma : (\widetilde{M},\widetilde{E}) \to (M,E)$, we denote by $\widetilde{\theta}$ the intersection of the strict transform of $\theta$ with $Der_{\widetilde{M}}(-log\widetilde{E})$. In particular, this guarantees that $(\widetilde{M},\widetilde{\theta},\widetilde{E})$ is a foliated ideal sheaf and we can write $\sigma : (\widetilde{M},\widetilde{\theta},\widetilde{E}) \to (M,\theta,E)$.
\section{Monomial Distributions}
\label{ssec:Mon}
\begin{definition}[Monomial singular distribution]
\label{def:MSD}
Given a foliated manifold $(M,\theta,\allowbreak E)$ and a field $\mathcal{K}$ such that $\mathbb{Q} \subset \mathcal{K} \subset \mathbb{K}$, we say that the singular distribution $\theta$ is $\mathcal{K}$-\textit{monomial} at a point $p$ if there exists set of generators $\{\partial_1,...,\partial_d\}$ of $\theta \cdot \mathcal{O}_p$ and a coordinate system $(\pmb{u},\pmb{w}) = (u_1,\ldots,  u_k,w_{k+1}, \ldots w_m)$ centered at $p$ such that:
\begin{itemize}
\item[(i)] Locally $E = \{u_1 \cdots u_l=0\}$, for some $l\leq k$;
\item[(ii)] $\theta$ is everywhere tangent to $E$, i.e. $\theta \subset Der_M(-logE)$;
\item[(iii)] The vector fields $\partial_i$ are of the form:
\[
 \begin{aligned}
 \partial_i &=  \partial w_{m+1-i} \text{, }i = 1, \ldots, m-k &\text{, and}\\
  \partial_i &= \sum^{k}_{j=1} \alpha_{i,j}u_j \partial u_j \text{, }i=m-k+1, \ldots, d&
 \end{aligned}
\]
where $\alpha_{i,j} \in \mathcal{K}$.
\item[(iv)] If $\omega \subset Der_M(-logE)$ is an involutive $d$-singular distribution such that $\theta \subset \omega$, then $\theta = \omega$.
\end{itemize}
In this case, we say that $(\pmb{u},\pmb{w})$ is a \textit{monomial coordinate system} and that $\{\partial_1,...,\partial_d\}$ is a \textit{monomial basis} of $\theta \cdot \mathcal{O}_p$.
\end{definition}
\begin{remark}[Geometrical Interpretation of $(iv)$]
Assuming conditions $[i-iii]$ above, Property $[iv]$ implies that the singularity set of $\theta$ is of codimension at least two outside of the exceptional divisor $E$.
\label{rk:Condiv}
\end{remark}
\begin{notation}[Monomial coordinate system]
We sometimes need to distinguish one of the non-exceptional coordinates $\pmb{w}$. To that end, we will denote by $(\pmb{u},v,\pmb{w})$ a monomial coordinate system where the vector field $\partial_{v}$ will always be assumed to be contained in $\theta\cdot \mathcal{O}_p$.
\end{notation}
The lemma below shows an important feature of $\mathcal{K}$-monomial singular distributions:
\begin{lemma}(Monomial First Integrals - \cite[Lemma 2.2.2]{BeloT})
Given a foliated manifold $(M,\theta,E)$, the singular distribution $\theta$ is $\mathcal{K}$-monomial if and only if for all points $p \in M$, there exists a monomial coordinate system $(\pmb{u},\pmb{w}) = (u_1,\ldots,  u_k,w_{k+1}, \ldots,\allowbreak w_m)$ centered at $p$ and $m-d$ monomials $(\umon{\beta_1},\dots,\umon{\beta_{m-d}})$, where $\umon{\beta_j} = \Pi_{i=1}^k u_i^{\beta_{i,j}}$ with $\beta_{i,j} \in \mathcal{K}$, such that
\begin{itemize}
\item the multi-indexes $\{\pmb{\beta}_1, \ldots, \pmb{\beta}_{m-d}\}$ span a $m-d$ subspace of $\mathcal{K}^{k}$,
\item $
\theta\cdot \mathcal{O}_p =\{\partial \in Der_p(-logE);\text{ } \partial(\umon{\beta_i})\equiv 0 \text{ for all } i\}
$.
\end{itemize}
\label{lem:Fi}
\end{lemma}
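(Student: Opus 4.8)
The plan is to reduce both implications to one elementary duality between \emph{diagonal logarithmic vector fields} and \emph{generalised monomials}. If $\partial=\sum_{i=1}^{k}\alpha_i u_i\partial_{u_i}$ with $\alpha_i\in\mathcal{K}$ and $\umon{\beta}=\prod_{i=1}^k u_i^{\beta_i}$ with $\beta_i\in\mathcal{K}$, then $\partial(\umon{\beta})=\langle\alpha,\beta\rangle\,\umon{\beta}$, so $\partial$ annihilates $\umon{\beta}$ precisely when the exponent vectors are orthogonal in $\mathcal{K}^{k}$. For an arbitrary $\partial\in Der_p(-logE)$ one has $\partial(\umon{\beta})=\bigl(\sum_i\beta_i\,\partial(u_i)/u_i\bigr)\umon{\beta}$, and multiplying the bracket by $\prod_{i=l+1}^{k}u_i$ turns it into a well-defined $\mathbb{K}$-analytic function; hence, for any finite family $\umon{\beta_1},\dots,\umon{\beta_{m-d}}$, the common annihilator
\[
\omega_0\;:=\;\bigl\{\partial\in Der_p(-logE)\;:\;\partial(\umon{\beta_j})\equiv0,\ j=1,\dots,m-d\bigr\}
\]
is a coherent, involutive singular distribution tangent to $E$, and vanishing of $\partial(\umon{\beta_j})$ is detected on any dense open set. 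I would work throughout with these associated analytic functions rather than with the multivalued symbols $\umon{\beta_j}$.

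For the direct implication, start from a monomial coordinate system $(\pmb u,\pmb w)$ and a monomial basis $\{\partial_1,\dots,\partial_d\}$ of $\theta\cdot\mathcal{O}_p$ as in Definition~\ref{def:MSD}. The exponent vectors $\alpha_{m-k+1},\dots,\alpha_d\in\mathcal{K}^{k}$ of the diagonal generators are $\mathcal{K}$-linearly independent --- a relation among them would give a relation among the $d$ generators of $\theta$, impossible since $\theta$ is locally free of rank $d$ off its singular set --- so $V:=\mathrm{span}_{\mathcal{K}}\{\alpha_{m-k+1},\dots,\alpha_d\}$ has dimension $k-(m-d)$. Pick a basis $\beta_1,\dots,\beta_{m-d}$ of $V^{\perp}\subset\mathcal{K}^{k}$ and set $\umon{\beta_j}=\prod_i u_i^{\beta_{i,j}}$. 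The duality gives $\partial_i(\umon{\beta_j})=\langle\alpha_i,\beta_j\rangle\umon{\beta_j}=0$ and $\partial_{w_s}(\umon{\beta_j})=0$, hence $\theta\cdot\mathcal{O}_p\subseteq\omega_0$; on the other hand the $1$-forms $d\log\umon{\beta_j}=\sum_i\beta_{i,j}\,du_i/u_i$ are linearly independent wherever all $u_i\neq0$, so $\omega_0$ has leaf dimension $m-(m-d)=d$. Since $\omega_0$ is involutive, tangent to $E$, and contains $\theta$, property~$(iv)$ of Definition~\ref{def:MSD} forces $\theta\cdot\mathcal{O}_p=\omega_0$, and $\dim V^{\perp}=m-d$ is the asserted spanning property.

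For the converse, suppose $\theta\cdot\mathcal{O}_p=\omega_0$ for coordinates $(\pmb u,\pmb w)$ adapted to $E=\{u_1\cdots u_l=0\}$ and monomials $\umon{\beta_j}$ whose exponent vectors span an $(m-d)$-dimensional subspace. First \emph{relabel}: any coordinate $u_i$ with $i>l$ occurring in no $\umon{\beta_j}$ (i.e.\ $\beta_{i,j}=0$ for all $j$) is moved into the $\pmb w$-block --- legitimate, since such $u_i$ is not in $E$ and $\partial_{u_i}\in\omega_0$ already. After relabeling, the surviving $\pmb u$-block $u_1,\dots,u_{k_0}$ (with $l\le k_0$) has the feature that every $u_i$ with $l<i\le k_0$ occurs in some first integral, and the $\beta_j$ still span an $(m-d)$-dimensional subspace of $\mathcal{K}^{k_0}$. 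Choose a basis $\alpha^{(1)},\dots,\alpha^{(k_0-(m-d))}$ of $\{\alpha\in\mathcal{K}^{k_0}:\langle\alpha,\beta_j\rangle=0\ \forall j\}$, put $\partial_t^{\mathrm{diag}}=\sum_i\alpha^{(t)}_i u_i\partial_{u_i}$, and propose the $d$ vector fields $\{\partial_{w_s}\}_{s=k_0+1}^{m}\cup\{\partial_t^{\mathrm{diag}}\}_t$ as a monomial basis. They lie in $\omega_0$ by the duality, and the real content is that they \emph{generate} $\omega_0$ over $\mathcal{O}_p$: writing a general $\partial\in\omega_0$ as $\sum_{i\le l}a_i u_i\partial_{u_i}+\sum_{l<i\le k_0}a_i\partial_{u_i}+\sum_s b_s\partial_{w_s}$ (tangency to $E$ forcing $u_i\mid a_i$ for $i\le l$), subtracting the $\partial_{w_s}$-part, expanding $\partial(\umon{\beta_j})\equiv0$, and restricting the resulting analytic relation to a hyperplane $\{u_{i_0}=0\}$ with $l<i_0\le k_0$ forces $u_{i_0}\mid a_{i_0}$ (because $u_{i_0}$ occurs in some $\umon{\beta_{j_0}}$); hence the remaining field equals $\sum_{i\le k_0}c_i u_i\partial_{u_i}$ with $c_i\in\mathcal{O}_p$ and $\sum_i c_i\beta_{i,j}=0$ for all $j$, and since $(\beta_{i,j})$ has full rank $m-d$ over $\mathcal{K}$ its $\mathcal{O}_p$-solution module is freely generated by $\alpha^{(1)},\dots,\alpha^{(k_0-(m-d))}$. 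Conditions $(i)$--$(iii)$ then hold by construction, and $(iv)$ holds because $\omega_0$, being an annihilator, coincides with every involutive $d$-singular distribution containing it (both have rank $d$ on a dense open set, and a vector field for which $\partial(\umon{\beta_j})$ vanishes on a dense set has $\partial(\umon{\beta_j})\equiv0$); thus $\theta$ is $\mathcal{K}$-monomial at $p$.

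The main obstacle is the converse direction. The relabeling of the coordinates that appear in no first integral is exactly what makes the divisibility step $u_{i_0}\mid a_{i_0}$ go through, and without it the generation argument fails; and one must be disciplined about the fact that $\umon{\beta_j}$ is only a formal, possibly multivalued symbol, so every manipulation of ``$\partial(\umon{\beta_j})$'' has to be read through the honest $\mathbb{K}$-analytic function $\bigl(\prod_{i>l}u_i\bigr)\sum_i\beta_{i,j}\,\partial(u_i)/u_i$. Everything else is bilinear-form bookkeeping in $\mathcal{K}^{k}$, together with the observation that a matrix with entries in $\mathcal{K}$ has the same rank, and hence the same solution-module structure, over $\mathcal{O}_p$.
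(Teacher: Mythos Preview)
The paper does not prove this lemma: immediately after the statement and the subsequent remark, the author writes ``We won't explicitly use Lemma~\ref{lem:Fi} in this manuscript, so we omit its proof,'' deferring instead to \cite[Lemma~2.2.2]{BeloT}. There is therefore no in-paper argument to compare your proposal against.

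On its own merits your proof is sound. The forward direction is clean: the exponent vectors of the singular generators are $\mathcal{K}$-independent (else the $d$ generators would be $\mathcal{O}_p$-dependent, contradicting generic rank $d$), their orthogonal complement in $\mathcal{K}^k$ has dimension $m-d$, the associated monomials are annihilated by all of $\theta$, and property~(iv) closes the gap $\theta\subseteq\omega_0$. For the converse, the relabeling step is indeed the crux, and your divisibility argument (restrict the cleared-denominator relation to $\{u_{i_0}=0\}$, use that $\beta_{i_0,j_0}\neq 0$ for some $j_0$) correctly forces every coefficient into the form $c_i u_i$, after which the $\mathcal{O}_p$-kernel of the constant-coefficient matrix $(\beta_{i,j})$ is the $\mathcal{O}_p$-span of a $\mathcal{K}$-basis of the $\mathcal{K}$-kernel. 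Your handling of the possibly non-integral exponents via the honest analytic function $\bigl(\prod_{i>l}u_i\bigr)\sum_i\beta_{i,j}\,\partial(u_i)/u_i$ is the right way to make the formal manipulation rigorous, and the verification of~(iv) for $\omega_0$ via density is correct since the annihilator condition is analytic. One small wording slip: you write ``tangency to $E$ forcing $u_i\mid a_i$ for $i\le l$'' after already having written the term as $a_i u_i\partial_{u_i}$; the intent is clear but the bookkeeping is redundant there.
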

\begin{remark}
The lemma gives a precise relation between monomial singular distributions and monomial maps (see definition in \cite{Cut2}). If a holomorphic map is monomial, the foliation generated by its level curves is $\mathbb{Q}$-monomial. Furthermore, in the study of families, the notion of quasi-smoothness (see a definition in \cite{Vil2}) is closely related to a $\mathbb{Q}$-monomial distributions. We won't explicitly use Lemma \ref{lem:Fi} in this manuscript, so we omit its proof.
\end{remark}
We now turn to two properties of $\mathcal{K}$-monomial singular distributions which will be needed:
\begin{lemma}
If $\theta$ is $\mathcal{K}$-monomial singular distribution at a point $p$ in $M$, then there exists an open neighborhood $U$ of $p$ such that $\theta$ is $\mathcal{K}$-monomial at every point $q$ in $U$. Moreover, if $(\pmb{u},\pmb{w})$ is a coordinate system defined in a connected open neighborhood $V$ which is monomial at $p$, then $\theta$ is $\mathcal{K}$-monomial everywhere in $V$.
\label{lem:Gmonloc}
\end{lemma}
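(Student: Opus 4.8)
The plan is to deduce the first assertion from the second and then, given a monomial coordinate system $(\pmb u,\pmb w)$ at $p$ defined on a connected open $V$, to manufacture a monomial coordinate system at an arbitrary $q\in V$. A monomial coordinate system centred at $p$ is by definition defined on some connected open neighbourhood, so applying the second assertion to it yields the first with $U=V$. For the second assertion I would first note, using condition $(iv)$ of Definition \ref{def:MSD} and coherence of $\theta$, that $\theta|_V$ coincides with the distribution $\theta'$ generated by the monomial basis $\{\partial_1,\dots,\partial_d\}$ on $V$: the pairwise Lie brackets of these vector fields vanish identically, so $\theta'$ is an involutive $d$-distribution tangent to $E$ throughout $V$; it agrees with $\theta$ near $p$, hence generically on $V$; and maximality of $\theta|_V$ together with saturatedness of $\theta'$ (established below) forces $\theta|_V=\theta'$. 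So it suffices to build, for each $q\in V$, a monomial coordinate system for $\theta'$ centred at $q$.

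Fix $q\in V$ and partition $\{1,\dots,k\}=J_0\sqcup J_1$ with $J_0=\{j:u_j(q)=0\}$ and $J_1=\{j:u_j(q)\neq0\}$; near $q$ the divisor $E$ is cut out by the $u_j$ with $j\in L_0:=J_0\cap\{1,\dots,l\}$. For $j\in J_1$ the coordinate $u_j$ is a unit near $q$, so $\bar u_j:=\log(u_j/u_j(q))$ (principal branch, in both the real and complex cases) is analytic near $q$, vanishes at $q$, and satisfies $u_j\,\partial_{u_j}=\partial_{\bar u_j}$. Replacing $u_j$ by $\bar u_j$ for $j\in J_1$, each $w_i$ by $w_i-w_i(q)$, and keeping $u_j$ for $j\in J_0$, I obtain coordinates centred at $q$ in which the generators $\partial_{w_{k+1}},\dots,\partial_{w_m}$ are unchanged while each of the remaining $r:=d-(m-k)$ generators becomes $X_i=\sum_{j\in J_0}\alpha_{i,j}u_j\partial_{u_j}+\sum_{j\in J_1}\alpha_{i,j}\partial_{\bar u_j}$. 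Next I run linear algebra on the $X_i$: taking $\mathcal K$-linear combinations of them (a change of monomial basis) and a $\mathcal K$-linear change of the coordinates $\{\bar u_j\}_{j\in J_1}$, I arrange that, with $t$ the rank of the ``$J_1$-block'' of $(\alpha_{i,j})$, exactly $t$ of these generators read $Y_a=\partial_{\bar u_{j_a}}+\sum_{j\in J_0}\gamma_{a,j}u_j\partial_{u_j}$ (with $j_1,\dots,j_t\in J_1$) and the remaining $r-t$ are pure monomial fields $Z_b=\sum_{j\in J_0}b_{b,j}u_j\partial_{u_j}$. The exponential shear $\hat u_j:=u_j\exp\!\big(-\sum_{a=1}^t\gamma_{a,j}\bar u_{j_a}\big)$ for $j\in J_0$ is a legitimate coordinate change near $q$ (each $\hat u_j$ is $u_j$ times a unit and vanishes at $q$), after which $Y_a=\partial_{\bar u_{j_a}}$ and each $Z_b$ is still monomial in $\{\hat u_j\}_{j\in J_0}$.

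Now I designate the monomial coordinate system at $q$: its $w$-coordinates are $\{w_i-w_i(q)\}_{i>k}\cup\{\bar u_{j_a}\}_{a\le t}$, and its $u$-coordinates are $\{\hat u_j\}_{j\in J_0}\cup\{\bar u_{j_a}\}_{a>t}$, with the $L_0$-members of the latter cutting out $E$; the crucial point is that the leftover $\bar u_{j_a}$ with $a>t$ still vanish at $q$ and are non-exceptional, hence are admissible regular $u$-coordinates, appearing with zero exponent in each $Z_b$. Conditions $(i)$–$(iii)$ of Definition \ref{def:MSD} are then bookkeeping: for $(i)$ one reorders the $u$-coordinates and checks the new $l'=|L_0|$ satisfies $l'\le|J_0|\le k'=k-t$; for $(ii)$ each generator visibly preserves $\mathcal I_E$; for $(iii)$ all new coefficients lie in $\mathcal K$; and a count confirms there are $d$ generators split as required.

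The hard part is condition $(iv)$ — that the form produced at $q$ is maximal among involutive $d$-distributions tangent to $E$, equivalently that it is saturated in $Der_q(-\log E)$. I would handle this via Remark \ref{rk:Condiv}: under $(i)$–$(iii)$, maximality is equivalent to $S(\theta)$ having codimension $\ge2$ away from $E$, which for a monomial distribution unravels into the combinatorial condition that deleting any column of index $>l$ from the exponent matrix preserves its rank; a short computation with the block form $[B\,|\,0]$ of the exponent matrix at $q$ produced above shows this property at $p$ forces it at $q$. The same computation shows $\theta'$ is saturated at every point of $V$, which is what closed the identification $\theta|_V=\theta'$ above. I expect $(iv)$ to be the only genuine obstacle: conditions $(i)$–$(iii)$ just amount to absorbing the units $u_j(q)$ into a coordinate change, whereas maximality is a codimension/sheaf-theoretic statement that must be tracked by hand through the construction.
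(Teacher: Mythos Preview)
Your core coordinate-change argument --- replace each nonvanishing $u_j$ by its logarithm, do $\mathcal K$-linear algebra on the resulting coefficient matrix, then apply an exponential shear to kill the mixed terms --- is exactly what the paper does (compare its explicit formulas $u_j=\xi_j\exp(\alpha_{j,j}x_j)$, etc.). So on the substantive part, the approaches coincide.

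The main divergence is your treatment of condition $(iv)$. You flag it as ``the hard part'' and attempt to verify it at $q$ via Remark~\ref{rk:Condiv}, but note that Remark~\ref{rk:Condiv} only states one implication (that $(iv)$ \emph{implies} the codimension bound), so you are invoking an equivalence the paper never proves. More to the point, the effort is unnecessary: as written in Definition~\ref{def:MSD}, condition $(iv)$ is a \emph{global} statement about $\theta$ and $Der_M(-\log E)$ --- it mentions neither $p$, nor the coordinate system, nor the monomial basis --- so once it holds as part of $\mathcal K$-monomiality at $p$, it holds verbatim at every $q$. This is why the paper simply declares ``$(i)$, $(ii)$ and $(iv)$ are clearly open'' and concentrates on $(iii)$. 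Your identification $\theta|_V=\theta'$ likewise comes for free on a small enough $V$ by coherence of $\theta$; what your saturation argument would genuinely buy is the stronger ``moreover'' clause (monomiality on the \emph{entire} connected domain of the chart), a point the paper leaves implicit. In short: right argument for $(iii)$, over-engineered (and resting on an unproved converse) for $(iv)$.
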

\begin{proof}
Conditions $[i]$, $[ii]$ and $[iv]$ are clearly open. In order to prove that condition $[iii]$ also is, fix a monomial coordinate system $(\pmb{u},\pmb{w})$ centered at $p$ and a monomial basis $\{\partial_1, \ldots, \partial_d\}$, both defined in an open neighborhood $U$ of $p$, and let $q$ be another point in $U$. Note that a translation in the non-exceptional variables $\pmb{w}$ preserves the $\mathcal{K}$-monomiality. Thus, without loss of generality, we can suppose that $q = (\pmb{\xi},\pmb{0}) = (\xi_1,\ldots, \xi_t,0, \ldots, 0)$, where $t\leq k$ and all terms $\xi_i \neq 0$. Moreover, we can suppose that all vector fields $\partial_i$ in the monomial basis are singular at $p$, i.e. $\partial_i =\sum^{k}_{j=1} \alpha_{i,j}u_j \partial u_j $. Let $s$ be the rank of the $d \times t$ matrix of coefficients $\pmb{A} = [\alpha_{i,j}]$. Without loss of generality, we can assume that:
\[
\pmb{A} = \left[ \begin{array}{cc}
\pmb{A}_1 & \pmb{A}_2 \\
0 & 0
\end{array}
 \right]
\] 
where $\pmb{A}_1$ is a $s \times s$-diagonal matrix. In particular
\[
\begin{aligned}
\partial_i &= \alpha_{i,i}u_i \partial_{u_i} +  \sum^{k}_{j=s+1} \alpha_{i,j} u_j \partial_{u_j} \text{, } i \leq s\\
\partial_i &= \phantom{\alpha_{i,i}u_i \partial_{u_i} + } \sum^{k}_{j=t+1} \alpha_{i,j} u_j \partial_{x_j} \text{, } i > s
\end{aligned}
\]
where $\alpha_{i,i}\neq 0$ for all $i\leq s$. Now, consider the coordinate system $(\pmb{x},\pmb{w})$ given by the following formulas:
\[
\begin{aligned}
u_j &= \xi_j exp(\alpha_{j,j}x_j) \text{, }j\leq s\\
u_j &= \xi_j exp\left( x_j + \sum_{i=1}^s \alpha_{i,j}x_i \right) \text{, }s < j\leq t\\
u_j &= x_j exp\left(\sum_{i=1}^s \alpha_{i,j}x_i \right) \text{, } t<j \leq k
\end{aligned}
\]
It is not difficult to see that $(\pmb{x},\pmb{w})$ is a coordinate system centered at $q$ and:
\[
\begin{aligned}
\partial_i &= \partial_{x_i} \text{, } i \leq s\\
\partial_i &=  \sum^{k}_{j=t+1} \alpha_{i,j} x_j \partial_{x_j} \text{, } i > s
\end{aligned}
\]
which finishes the proof.
\end{proof}
\begin{lemma}
Suppose that $\theta$ is a $\mathcal{K}$-monomial $1$-singular distribution and that the ideal sheaf $\mathcal{I}$ is $\theta$-invariant, i.e., $\theta[\mathcal{I}]\subset \mathcal{I}$. Given a point $p$ in $M$, fix a monomial basis $\partial$, a monomial coordinate system $(\pmb{u},\pmb{w})$ and a system of generators $(f_1,\ldots,f_n)$ of $\mathcal{I}$ at the point $p$. Then, there exists a system of generators $(h_1,\dots,h_N)$ of $\mathcal{I}\cdot \mathcal{O}_p$ such that $\partial h_i = \zeta_i h_i$, for constants $\zeta_i \in \mathcal{K}$. If $\partial$ is a regular vector field, moreover, then $\zeta_i =0$. Finally, each $h_i$ is a sum of blocks in the Taylor expansion of one of the $f_j$ (in relation to the fixed monomial coordinate system).
\label{lem:InvNSing}
\end{lemma}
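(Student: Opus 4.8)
The plan is to show that a $\theta$-invariant ideal is, locally, compatible with the block decomposition attached to the generator $\partial$ of $\theta$, and then to take finitely many blocks of $f_1,\dots,f_n$ as the new generators $h_i$. Since $\theta$ has leaf dimension one and is $\mathcal{K}$-monomial, Definition~\ref{def:MSD} forces the fixed monomial basis $\partial$ of $\theta\cdot\mathcal{O}_p$ to have one of two shapes: either $\partial=\partial_v$ is a coordinate field (so $\partial$ is regular, and $v$ is then the only non-exceptional coordinate of the monomial chart), or $\partial=\sum_{j=1}^{k}\alpha_j u_j\partial_{u_j}$ with $\alpha_j\in\mathcal{K}$ not all zero. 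In either case I would first reduce to the formal completion $\widehat{\mathcal{O}}_p$: the map $\mathcal{O}_p\to\widehat{\mathcal{O}}_p$ is faithfully flat, a block of a convergent power series is again convergent, and if finitely many such blocks generate $\widehat{\mathcal{I}}=\mathcal{I}\widehat{\mathcal{O}}_p$ then — lying in $\mathcal{I}=\widehat{\mathcal{I}}\cap\mathcal{O}_p$ — they already generate $\mathcal{I}$. I will use repeatedly that $\widehat{\mathcal{I}}$ is $\partial$-stable (as $\partial$ is a derivation with $\partial f_j\in\mathcal{I}$) and $\mathfrak{m}$-adically closed, so that $\mathfrak{m}$-adically convergent series of elements of $\widehat{\mathcal{I}}$ remain in $\widehat{\mathcal{I}}$.

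For the singular case, I would assign to the monomial $\umon{c}\wmon{e}$ the weight $\langle\pmb{\alpha},\pmb{c}\rangle=\sum_j\alpha_j c_j\in\mathcal{K}$ and, for $g\in\widehat{\mathcal{O}}_p$, let $g_\zeta$ be the sub-series of its monomials of weight $\zeta$, so $g=\sum_\zeta g_\zeta$ and $\partial g_\zeta=\zeta\,g_\zeta$. The key claim is: \emph{if $g\in\widehat{\mathcal{I}}$ then every $g_\zeta\in\widehat{\mathcal{I}}$}, which I would prove degree by degree. Fixing $\zeta$ and a total degree $d$, only finitely many weights $\zeta_0=\zeta,\zeta_1,\dots,\zeta_s$ occur among the monomials of $g$ of degree $\le d$; they are distinct elements of a field of characteristic zero, so Vandermonde gives $c_0,\dots,c_s\in\mathcal{K}$ with $\sum_{m=0}^{s}c_m\zeta_i^m=\delta_{0i}$. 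Then $\sum_m c_m\partial^m g=\sum_\eta\bigl(\sum_m c_m\eta^m\bigr)g_\eta$ lies in $\widehat{\mathcal{I}}$ and agrees with $g_\zeta$ in all degrees $\le d$; letting $d\to\infty$ yields a sequence in $\widehat{\mathcal{I}}$ converging $\mathfrak{m}$-adically to $g_\zeta$, so $g_\zeta\in\widehat{\mathcal{I}}$. Applied to $f_1,\dots,f_n$, this shows that the blocks $f_{j,\zeta}$ lie in $\widehat{\mathcal{I}}$ and together generate it (each $f_j$ is the $\mathfrak{m}$-adic limit of finite sums of its blocks), so by noetherianity finitely many of them, $h_1,\dots,h_N$, generate $\widehat{\mathcal{I}}$; each is a block of some $f_j$ with $\partial h_i=\zeta_i h_i$, $\zeta_i\in\mathcal{K}$.

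For the regular case $\partial=\partial_v$, I would write $\widehat{\mathcal{O}}_p=R[[v]]$ with $R=\mathbb{K}[[\pmb{u}]]$ and expand $g=\sum_{a\ge 0}g_a v^a$. Since $\partial_v$ is not semisimple I would replace eigenvalue extraction by the identity $g|_{v=0}=\sum_{j\ge 0}c_j\,v^j\partial_v^j g$, valid with rational constants ($c_0=1$, and $c_b=-\tfrac1{b!}\sum_{j=0}^{b-1}c_j\,b(b-1)\cdots(b-j+1)$ for $b\ge 1$), the right-hand side being $\mathfrak{m}$-adically convergent. Hence $g|_{v=0}\in\widehat{\mathcal{I}}$ whenever $g\in\widehat{\mathcal{I}}$, and since $g_a=\tfrac1{a!}(\partial_v^a g)|_{v=0}$ with $\partial_v^a g\in\widehat{\mathcal{I}}$, every coefficient $g_a\in\widehat{\mathcal{I}}$. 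As before, the coefficients $f_{j,a}$ of the given generators generate $\widehat{\mathcal{I}}$, finitely many of them ($h_1,\dots,h_N$) suffice, each is a block ($v$-homogeneous piece) of some $f_j$, and $\partial_v h_i=0$, so $\zeta_i=0$.

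The hard part will be the key claim in the singular case — making $\partial$-invariance interact with a block decomposition with infinitely many parts. My remedy is to localize the extraction to each total degree, where only finitely many weights occur and Vandermonde applies, and then to glue the finite-degree approximations with $\mathfrak{m}$-adic closedness of $\widehat{\mathcal{I}}$; in the regular case the analogous obstruction (the non-semisimplicity of $\partial_v$) is bypassed by writing the projection onto the $v$-independent part as an infinite-order differential operator in $\partial_v$ with rational coefficients. The rest — faithfully flat descent, convergence of blocks, noetherian reduction to finitely many generators — is routine, and in both cases the resulting $h_i$ are, as required, blocks of the Taylor expansions of the $f_j$ relative to the fixed monomial coordinate system.
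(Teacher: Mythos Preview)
Your proof is correct and follows essentially the same strategy as the paper: decompose each generator into $\partial$-eigen-blocks, show each block lies in $\mathcal{I}$ by approximating it $\mathfrak{m}$-adically with differential operators in $\partial$ applied to elements of $\mathcal{I}$, then invoke faithful flatness and noetherianity. The paper carries out the approximation by the recursive elimination $g_n=(\zeta_0-\zeta_n)^{-1}(\partial g_{n-1}-\zeta_n g_{n-1})$ rather than your degree-by-degree Vandermonde, and in the regular case simply says the argument is ``analogous to the previous case,'' whereas your operator $\sum_j c_j\,v^j\partial_v^j$ makes this step explicit.
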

\begin{proof}
Fix a monomial coordinate system $(\pmb{u},\pmb{w})$ and a system of generators $(f_1, \dots, \allowbreak f_n)$ of $\mathcal{I}\cdot \mathcal{O}_p$. We first assume that $\partial$ is a singular vector field. In this case we will also denote the monomial coordinate system $(\pmb{u},\pmb{w})$ by $\pmb{x} = (x_1, \ldots, x_m)$ and, therefore, $\partial = \sum \alpha_i x_i \partial_{x_i}$. So, given any monomial $\xmon{\beta}= \Pi_{i=1}^m x_i^{\beta_i}$ with $\beta_i \in \mathbb{N}$, we have that
\[
\partial(\xmon{\beta}) = \sum^m_{i=1} \alpha_i \beta_i \xmon{\beta} = \zeta_{\boldsymbol{\beta}} \xmon{\beta}
\]
For some $\zeta_{\boldsymbol{\beta}} \in \mathcal{K}$. Since the number of monomials in a Taylor expansion is countable, there exists a countable set $\widetilde{\mathcal{K}} \subset \mathcal{K}$ such that $\zeta_{\boldsymbol{\beta}} \in \widetilde{\mathcal{K}}$ for all $\boldsymbol{\beta} \in \mathbb{N}^{m}$. This allow us to rewrite the Taylor expansion of each generator $f_i$:
\[
f_i(\pmb{x})= \sum_{j \in \mathbb{N}} h_{i,j}(\pmb{x})
\]
where $\partial(h_{i,j})= \zeta_j h_{i,j}$ with $\zeta_j \in \widetilde{\mathcal{K}}$ and $\zeta_j \neq \zeta_k$ whenever $j\neq k$. Furthermore, let us note that $h_{i,j}(\pmb{x})$ are convergent Taylor series (because $f_i$ is absolutely convergent in a neighborhood), which implies that $h_{i,j}(\pmb{x}) \in \mathcal{O}_p$. We claim that all functions $h_{i,j}$ are contained in $\mathcal{I}\cdot \mathcal{O}_p$. Indeed, let us show that $h_{1,0}$ is in the ideal (the proof for the other functions is analogous). Let $g_0 = f_1$ and
\[
\begin{aligned}
 g_1 &= \frac{1}{\zeta_0-\zeta_1}(\partial(f_1) - \zeta_1 f_1 ) = \frac{1}{\zeta_0-\zeta_1}\left[ \sum_{j \in \mathbb{N}} \zeta_j h_{1,j}(x) - \zeta_1 \sum_{j \in \mathbb{N}} h_{1,j}(x) \right] \\
 &= h_{1,0} + \sum_{j \geq 2} \gamma_{1,j}\sm{1} h_{1,j}
\end{aligned}
\]
where $\gamma_{1,j}\sm{1} = \frac{\zeta_j-\zeta_1}{\zeta_0-\zeta_1}$. We recursively define $g_n \in \mathcal{I}\cdot \mathcal{O}_p$:
\[
g_n = \frac{1}{\zeta_0-\zeta_n}(\partial(g_{n-1}) - \zeta_n g_{n-1}) = h_{1,0} + \sum_{j \geq n+1} \gamma_{1,j}\sm{n} h_{1,j} 
\]
for constants $\gamma_{1,j}\sm{n}$. It is clear that $(g_n)_{n\in\mathbb{N}} \subset \mathcal{I}\cdot \mathcal{O}_p$ converges formally to $h_{1,0}(x)$, i.e in the Krull topology of $\widehat{\mathcal{O}}_p$. By faithful flatness, this implies that $h_{1,0} \in \mathcal{I}\cdot \mathcal{O}_p$ (c.f \cite{Hor} section 6.3). It is now clear that $\mathcal{I}\cdot \mathcal{O}_p = (h_{i,j})$ and we just need to use Noetherianity to obtain a finite system of generators.

If $\partial$ is a regular vector field we will also denote the monomial coordinate system $(\pmb{u},\pmb{w})$ by $(\pmb{x},y) = (x_1, \ldots, x_{m-1},y)$, where we assume that $\partial  = \partial_y$. Now, we can write the Taylor expansion of each generator $f_i$ as:
\[
f_i(\pmb{x},y)= \sum_{j \in \mathbb{N}} h_{i,j}(\pmb{x})y^j
\]
We claim that all functions $h_{i,j}$ are contained in $\mathcal{I}\cdot \mathcal{O}_p$. Indeed, this is analogous to the previous case. Finally, it is clear that $\zeta_i=0$ for every $i$ and we are done.
\end{proof}
\begin{corollary}
Suppose that $\theta$ is a $\mathcal{K}$-monomial singular distribution and that the ideal sheaf $\mathcal{I}$ is $\theta$-invariant, i.e., $\theta[\mathcal{I}]\subset \mathcal{I}$. Fix a monomial basis $\{\partial_1, \ldots, \partial_d\}$ at a point $p$. Then, there exists a system of generators $(f_1,\dots,f_n)$ of $\mathcal{I}\cdot \mathcal{O}_p$ such that $\partial_i f_j = \zeta_{i,j} f_j$, where $\zeta_{i,j} \in \mathcal{K}$. Moreover, if $\partial_i$ is a regular vector field, then $\zeta_{i,j} =0$.
\label{cor:InvaBase}
\end{corollary}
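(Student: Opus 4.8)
The plan is to iterate Lemma \ref{lem:InvNSing} over the elements of the monomial basis, one derivation at a time, keeping track of the fact that each refinement step only regroups Taylor blocks of the previous generators. First I would fix a monomial coordinate system $(\pmb{u},\pmb{w})$ adapted to the chosen monomial basis $\{\partial_1,\ldots,\partial_d\}$ at $p$, together with an initial finite system of generators $(f_1^{(0)},\ldots,f_{n_0}^{(0)})$ of $\mathcal{I}\cdot\mathcal{O}_p$. The key structural observation from the proof of Lemma \ref{lem:InvNSing} is that the new generators it produces are each sums of Taylor blocks (with respect to the fixed coordinates) of the old generators; and the grouping into blocks is by the eigenvalue of the acting derivation. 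Since all the $\partial_i$ in a monomial basis are simultaneously diagonal in the coordinates $(\pmb{u},\pmb{w})$ — the singular ones act as $\sum_j \alpha_{i,j} u_j\partial_{u_j}$, scaling each monomial $\umon{\beta}$ by $\sum_j \alpha_{i,j}\beta_j$, and the regular ones are among the $\partial_{w}$, so $\partial_{w_s}(\umon{\beta}\wmon{\gamma}) $ lowers the $w_s$-degree by one — a common eigenbasis for all of $\partial_1,\ldots,\partial_d$ is given by the individual monomials $\umon{\beta}\wmon{\gamma}$ themselves.

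With this in hand, here is the inductive step. Suppose we have already found a finite system of generators $(f_1^{(r)},\ldots,f_{n_r}^{(r)})$ such that $\partial_i f_j^{(r)} = \zeta_{i,j} f_j^{(r)}$ for $i\le r$, with $\zeta_{i,j}\in\mathcal{K}$ (and $\zeta_{i,j}=0$ when $\partial_i$ is regular). Apply Lemma \ref{lem:InvNSing} with the single derivation $\partial_{r+1}$ and the generating set $(f_1^{(r)},\ldots,f_{n_r}^{(r)})$; this yields a finite system $(f_1^{(r+1)},\ldots,f_{n_{r+1}}^{(r+1)})$ with $\partial_{r+1} f_j^{(r+1)} = \zeta_{r+1,j} f_j^{(r+1)}$. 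The point is that each $f_j^{(r+1)}$ is, by the last sentence of Lemma \ref{lem:InvNSing}, a sum of Taylor blocks of some $f_k^{(r)}$; since $f_k^{(r)}$ was already a $\partial_i$-eigenvector for $i\le r$, and $\partial_i$ is diagonal in the monomial basis of the coordinate ring, every Taylor block of $f_k^{(r)}$ — being a sub-sum of monomials all carrying the same $\partial_i$-eigenvalue $\zeta_{i,k}$ — remains a $\partial_i$-eigenvector with that same eigenvalue. Hence $\partial_i f_j^{(r+1)} = \zeta_{i,k} f_j^{(r+1)}$ for all $i\le r$, and together with the new relation for $i=r+1$ we conclude the statement holds for all $i\le r+1$. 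After $d$ steps we obtain the desired simultaneous eigen-generators $(f_1,\ldots,f_n)$, with $\zeta_{i,j}=0$ whenever $\partial_i$ is regular.

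The main obstacle — really the only subtlety — is making precise the claim that Taylor blocks of a $\partial_i$-eigenvector are again $\partial_i$-eigenvectors with the same eigenvalue, for all the previously-handled $i$ simultaneously; this is exactly where one uses that a \emph{monomial} basis is simultaneously diagonalizable on the monomials $\umon{\beta}\wmon{\gamma}$, so that the block decomposition produced for $\partial_{r+1}$ refines, rather than conflicts with, the eigenspace decompositions for $\partial_1,\ldots,\partial_r$. One should also check the bookkeeping point that Lemma \ref{lem:InvNSing} is being applied to an ideal that is still $\theta$-invariant and to a legitimate generating set at each stage, which is immediate since $\mathcal{I}\cdot\mathcal{O}_p$ does not change and $\theta[\mathcal{I}]\subset\mathcal{I}$ implies in particular $\partial_{r+1}[\mathcal{I}]\subset\mathcal{I}$. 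Noetherianity of $\mathcal{O}_p$, already invoked inside Lemma \ref{lem:InvNSing}, guarantees each intermediate system can be taken finite, so the iteration terminates after the $d$ steps.
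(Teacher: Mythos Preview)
Your proposal is correct and follows essentially the same approach as the paper: iterate Lemma~\ref{lem:InvNSing} one derivation at a time, and use the ``sum of Taylor blocks'' clause to show the new generators remain eigenvectors for the previously-handled $\partial_i$. The paper phrases this as induction on the leaf dimension $d$ rather than as an explicit iteration, but the content is identical. One small imprecision: monomials $\umon{\beta}\wmon{\gamma}$ are \emph{not} eigenvectors of a regular $\partial_{w_s}$ in general, so your phrase ``simultaneously diagonalizable on the monomials'' is literally false; what makes the argument go through is that once a regular $\partial_i=\partial_{w_s}$ has been handled, the current generators are independent of $w_s$, and every subsequent Taylor-block decomposition (whether for a singular or a regular $\partial_{r+1}$) only regroups monomials that already omit $w_s$, hence remain annihilated by $\partial_i$.
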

\begin{proof}
This result follows by induction on the leaf-dimension $d$. The case $d=1$ is given by Lemma \ref{lem:InvNSing}. The induction hypothesis means that the Corollary is valid for monomial singular distributions of leaf dimension $d-1$. Since $\{\partial_1, \ldots, \partial_{d-1}\}$ locally generates a $d-1$ monomial singular distribution, there exists a systems of generators  $(f_1,\dots,f_n)$ of $\mathcal{I}\cdot \mathcal{O}_p$ such that $\partial_i f_j = \widetilde{\zeta}_{i,j} f_j$, where $\widetilde{\zeta}_{i,j}$ in $\mathcal{K}$ and $i\leq d-1$. We apply Lemma \ref{lem:InvNSing} again to get a system of generators $(h_1,\ldots, h_N)$ such that $\partial_d h_j = \zeta_{d,j} h_j$. Finally, since $h_j$ is a sum of blocks in the Taylor expansion of one of the $f_i$, we conclude that $\partial_i h_j = \zeta_{i,j} h_j$ for constants $\zeta_{i,j} \in \mathcal{K}$ and $i\leq d$.
\end{proof}
\section{$\theta$-Admissible blowings-up}
\label{sec:tadm}
In this section we define the notion of $\theta$-admissible centers, which was first introduced in \cite{BeloT} (an specialization of the definition when $\theta$ has leaf dimension one can be found in \cite[section 2.3]{Bel1}). Given an ideal sheaf $\mathcal{I}$, we consider ideal sheaves $\Gamma_{\theta,k}(\mathcal{I})$, which we call \textit{generalized $k$-Fitting ideal}, whose stalk at each point $p$ in $M$ is generated by all terms of the form:
\[
det \left\| 
\begin{array}{ccc}
\partial _1(f_1) & \ldots & \partial_{1}(f_k) \\
 \vdots & \ddots & \vdots \\
 \partial_{k}(f_1) & \ldots & \partial_{k}(f_k) 
\end{array}
\right\|
%\{ det[\partial_i(f_j)]_{i,j \leq k}; \text{ } \partial_i \in \theta.\mathcal{O}_{p}, f_j \in \mathcal{I}.\mathcal{O}_{p}\}
\]
for $\partial_i \in \theta \cdot \mathcal{O}_{p}$ and $f_j \in \mathcal{I} \cdot\mathcal{O}_{p}$.
\begin{definition}[$\theta$-Admissible blowing-up] We say that an admissible blowing-up $\sigma:(\widetilde{M},\widetilde{\theta},\widetilde{E}) \to (M,\theta,E)$ is $\theta$\textit{-admissible} if there exists $k_0 \in \mathbb{N}$ (possibly $k_0=0$) such that:
\begin{enumerate}
\item The ideal sheaf $\Gamma_{\theta,k}(\mathcal{I}_{\mathcal{C}}) + \mathcal{I}_{\mathcal{C}}$ is equal to $\mathcal{O}_M$ for $k\leq k_0$;
\item The ideal sheaf $\Gamma_{\theta,k}(\mathcal{I}_{\mathcal{C}})+ \mathcal{I}_{\mathcal{C}}$ is contained in $\mathcal{I}_{\mathcal{C}}$ for $k>k_0$. 
\end{enumerate} 
where $\mathcal{I}_{\mathcal{C}}$ is the reduced ideal sheaf whose support is the center of the blowing-up.
\label{def:tadm}
\end{definition}

\begin{example}
\label{ex:Kinds}
We present four examples:
\begin{enumerate}
\item If the center $\mathcal{C}$ is $\theta$-invariant center (i.e if all leaves of $\theta$ that intersects $\mathcal{C}$ are contained in $\mathcal{C}$), the blowing-up is $\theta$-admissible.
\item If the center $\mathcal{C}$ is an admissible $\theta$-totally transverse (i.e all vector fields in $\theta$ are transverse to $\mathcal{C}$), the blowing-up is $\theta$-admissible.
\item Let $M = \mathbb{C}^3$ and $\theta$ be generated by $ \{\partial_x, \partial_y\}$. A blowing-up with center $\mathcal{C} = \{x=0, z=0\}$ is $\theta$-admissible since $\Gamma_{\theta,1}(\mathcal{I}_{\mathcal{C}}) = \mathcal{O}_M$ and $\Gamma_{\theta,2}(\mathcal{I}_{\mathcal{C}}) \subset \mathcal{I}_{\mathcal{C}}$.
\item Let $M = \mathbb{C}^3$ and $\theta$ be generated by $ \{\partial_x, \partial_y\}$. A blowing-up with center $\mathcal{C} = \{x^2-z=0,y=0\}$ is not $\theta$-admissible since $\Gamma_{\theta,2}(\mathcal{I}_{\mathcal{C}})+\mathcal{I}_{\mathcal{C}} = (x,y,z)$.
%\end{flushleft}
\end{enumerate}
\end{example}

\begin{remark}[Intuition of the Definition] A blowing-up $\sigma:(\widetilde{M},\widetilde{\theta},\widetilde{E}) \to (M,\theta,E)$ is $\theta$-admissible if at each point $p$ in the center $\mathcal{C}$, the singular distribution $\theta$ can be decomposed in two singular distributions germs $\theta_{inv}$ and $\theta_{tr}$ such that the center $\mathcal{C}$ is $\theta_{inv}$-invariant and $\theta_{tr}$-totally transverse (see examples 1 and 2 above).
\label{rk:IntAC}
\end{remark}
The following result enlightens the interest of $\theta$-admissible blowings-up. The rest of this section is dedicated to its proof:
\begin{proposition}
Let $(M,\theta,E)$ be a $\mathcal{K}$-monomial foliated manifold and $
\sigma: (\widetilde{M},\widetilde{\theta},\widetilde{E})\allowbreak \to (M,\theta,E)
$ be a $\theta$-admissible blowing-up. Then $\widetilde{\theta}$ is also $\mathcal{K}$-monomial.
\label{prop:AdmCenter}
\end{proposition}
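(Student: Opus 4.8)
The plan is to reduce to a local chart computation, exploiting that $\mathcal{K}$-monomiality is an open and local property (Lemma \ref{lem:Gmonloc}). Since $\sigma$ restricts to an isomorphism over $M \setminus \mathcal{C}$, it suffices to show that $\widetilde{\theta}$ is $\mathcal{K}$-monomial at each point $\widetilde{p}$ of the exceptional divisor $F$; such a point lies over some $p \in \mathcal{C}$. Fix a monomial coordinate system $(\pmb{u},\pmb{w})$ and a monomial basis $\{\partial_1,\dots,\partial_d\}$ of $\theta\cdot\mathcal{O}_p$ as in Definition \ref{def:MSD}, so that $\partial_i=\partial_{w_{m+1-i}}$ for $i\le m-k$ (the \emph{flow} generators) and $\partial_i=\sum_{j=1}^k\alpha_{i,j}u_j\partial_{u_j}$ with $\alpha_{i,j}\in\mathcal{K}$ for $i>m-k$ (the \emph{diagonal} generators).

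\textit{Step 1: normal form for the center.} The claim is that, after a coordinate change preserving the monomial form of $\theta$ --- translations and linear substitutions in the flow coordinates $w_{k+1},\dots,w_m$, together with substitutions among the $u_i$ respecting the common weights carried by the diagonal generators --- the center becomes a coordinate subspace adapted to the basis,
\[
\mathcal{C}=\{u_i=0:i\in A\}\cap\{w_j=0:j\in B\},\qquad A\subseteq\{1,\dots,k\},\ B\subseteq\{k+1,\dots,m\}.
\]
This is where $\theta$-admissibility enters. Since every diagonal generator vanishes at $p$, the $k_0$ vector fields transverse to $\mathcal{C}$ forced by the condition $\Gamma_{\theta,k}(\mathcal{I}_{\mathcal{C}})+\mathcal{I}_{\mathcal{C}}=\mathcal{O}_M$ ($k\le k_0$) must come, at $p$, from the flow generators; after a linear change of the $\pmb{w}$ we may take them to be $\partial_{w_j}$, $j\in B$, with $|B|=k_0$ and the $w_j$ restricting to local equations of a complement of $\mathcal{C}$. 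The condition $\Gamma_{\theta,k}(\mathcal{I}_{\mathcal{C}})+\mathcal{I}_{\mathcal{C}}\subseteq\mathcal{I}_{\mathcal{C}}$ for $k>k_0$ then forces the remaining directions of $\mathcal{C}$ to be invariant under the complementary (diagonal) part of $\theta$; since $\partial_i[u_j]=\alpha_{i,j}u_j\in\mathcal{I}_{\mathcal{C}}$ whenever $\mathcal{C}$ is cut out by $u$-coordinates, a $u$-coordinate subspace is automatically invariant, so the remaining equations of $\mathcal{C}$ can be straightened to $\{u_i=0:i\in A\}$. This realizes the decomposition $\theta=\theta_{\mathrm{inv}}\oplus\theta_{\mathrm{tr}}$ of Remark \ref{rk:IntAC}, with $\mathcal{C}$ being $\theta_{\mathrm{tr}}$-transverse in the $B$-directions and $\theta_{\mathrm{inv}}$-invariant in the $A$-directions. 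Verifying that an arbitrary $\theta$-admissible center can indeed be straightened in this way --- in particular that it cannot mix flow and weight directions in a way surviving every monomiality-preserving change --- is the technical heart of this step.

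\textit{Step 2: the transformed distribution.} With $\mathcal{C}$ in normal form one computes $\widetilde{\theta}$ chart by chart. In the chart centered at $\widetilde{p}$, one of the defining coordinates --- some $u_{i_0}$ with $i_0\in A$, or some $w_{j_0}$ with $j_0\in B$ --- becomes a local equation of $F$; one relabels it as a new $u$-type coordinate, so that $\widetilde{E}$ is again a product of $u$-coordinates and the bound $l'\le k'$ is preserved. A direct calculation then shows that every pulled-back generator is, after clearing of exceptional factors and $\mathcal{O}$-linear recombination with the flow generators $\partial_{\bar w_j}$ already present (these absorb the cross terms, e.g. the terms $\bar w_j\partial_{\bar w_j}$ produced by pulling back a diagonal generator through the $u_{i_0}$-chart, or the terms produced by pulling back $\partial_{w_{j_0}}$), again of monomial form; the new coefficients are $\mathbb{Z}$-linear combinations of the $\alpha_{i,j}$, hence lie in $\mathcal{K}$. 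One checks that all these vector fields are tangent to $\widetilde{E}$, so that $\widetilde{\theta}$ coincides with this strict transform, and conditions (i), (ii), (iii) of Definition \ref{def:MSD} hold by construction.

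\textit{Step 3: maximality.} It remains to verify condition (iv) for $\widetilde{\theta}$, equivalently (Remark \ref{rk:Condiv}) that the singular locus of $\widetilde{\theta}$ stays of codimension at least two away from $\widetilde{E}$. This is deduced from condition (iv) for $\theta$ together with the structure found in Step 2: away from $F$ the transverse part $\theta_{\mathrm{tr}}$ is unaffected by $\sigma$, and the only new generator contributed by the invariant part --- of the shape $\bar w_{j_0}\partial_{\bar w_{j_0}}-\sum_{i\in A}\bar u_i\partial_{\bar u_i}-\cdots$, or a diagonal generator with shifted weights --- is non-degenerate off the new exceptional component, so no new codimension-one degeneracy appears. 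I expect the verification of (iv), together with the center normal form of Step 1, to be the genuine obstacles; the chart computations of Step 2 are mechanical.
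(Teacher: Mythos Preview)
Your overall strategy matches the paper's exactly: reduce to a local chart computation via Lemma~\ref{lem:Gmonloc}, put the center in coordinate-subspace normal form (the paper's Lemma~\ref{lem:AdmissibleCenter}), then compute the transformed basis chart by chart. Step~2 is essentially the paper's computation, and your remark about absorbing the cross terms $y_j\partial_{y_j}$ via $\mathcal{O}$-linear recombination with the flow generators is exactly what the paper's ``it is clear that $\widetilde\theta$ is $\mathcal{K}$-monomial'' is hiding.

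The genuine gap is in Step~1. You correctly extract the $k_0$ transverse directions from the flow generators, but for the remaining ($\theta$-invariant) part of the center your justification is circular: you observe that a $u$-coordinate subspace is $\theta$-invariant and conclude that the invariant equations ``can be straightened to $\{u_i=0:i\in A\}$''. That is the wrong direction; what is needed is that an arbitrary smooth $\theta$-invariant germ of center can be brought to $u$-coordinate form by a change of coordinates \emph{preserving the monomial basis}. The paper supplies this via Corollary~\ref{cor:InvaBase} (built on Lemma~\ref{lem:InvNSing}): a $\theta$-invariant ideal admits generators $f_j$ that are simultaneous eigenvectors of the monomial basis, $\partial_i f_j=\zeta_{i,j}f_j$ with $\zeta_{i,j}\in\mathcal{K}$. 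Once you have such an eigenfunction $f$ with $\partial_{u_l}f$ a unit, the substitution $\widetilde u_l=f$ keeps every $\partial_i$ in monomial form (the eigenvalue $\zeta_{i,l}$ becomes the new coefficient), and iterating gives the coordinate-subspace normal form. Without this eigenfunction lemma there is no reason the straightening should respect monomiality of the diagonal generators, and you have not indicated any substitute for it.

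A smaller point on Step~3: Remark~\ref{rk:Condiv} only asserts that (iv) \emph{implies} codimension $\ge 2$ off $E$, not the converse, so your ``equivalently'' is not justified as stated. The paper simply declares (iv) ``clearly true'' for $\widetilde\theta$; the honest argument is that maximality inside $Der(-\log E)$ is a birational condition on a generically regular distribution, hence passes to the strict transform intersected with $Der(-\log\widetilde E)$, rather than anything about singular-locus codimension.
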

\subsection{Proof of Proposition \ref{prop:AdmCenter}} The proof of the proposition relies on the following preliminary result:
\begin{lemma}
Let $\theta$ be a $\mathcal{K}$-monomial singular distribution and $\sigma$ be a $\theta$-admissible blowing-up. Then, at each point $p$ in the center of blowing-up $\mathcal{C}$, there exists a monomial coordinate system $(\pmb{u},\pmb{w})$ centered at $p$ such that $\mathcal{I}_{\mathcal{C}}$ is locally generated by $(u_1^{\epsilon_1}, \ldots, u_k^{\epsilon_k},w_{k+1}^{\epsilon_{k+1}},\ldots, w_m^{\epsilon_m})$ with $\epsilon_i \in \{0,1\}$.
\label{lem:AdmissibleCenter}
\end{lemma}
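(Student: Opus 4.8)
The plan is to reformulate $\theta$-admissibility as a constant-rank condition on the tangency between $\theta$ and $\mathcal C$, and then to straighten $\mathcal C$ inside a monomial coordinate system, peeling off first the directions of $\theta$ transverse to $\mathcal C$ and treating afterwards the $\theta$-invariant remainder. Concretely, I would fix a monomial coordinate system $(\pmb u,\pmb w)=(u_1,\dots,u_k,w_{k+1},\dots,w_m)$ centred at $p$ and a monomial basis $\{\partial_1,\dots,\partial_d\}$, so that $\partial_i=\partial_{w_{m+1-i}}$ for $i\le m-k$ and $\partial_i=\sum_j\alpha_{i,j}u_j\partial_{u_j}$ for $i>m-k$; note that the latter ``diagonal'' fields all vanish at $p$, since $p$ is the origin of these coordinates. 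Let $c=\operatorname{codim}\mathcal C$, let $g_1,\dots,g_c$ minimally generate $\mathcal I_{\mathcal C}\cdot\mathcal O_p$, and set $J=[\partial_i(g_j)]_{1\le i\le d,\,1\le j\le c}$. By multilinearity of the determinant, $\Gamma_{\theta,k}(\mathcal I_{\mathcal C})$ is generated near $p$ by the $k\times k$ minors of $J$; and since $\mathcal C$ is smooth and $\mathcal I_{\mathcal C}$ radical, $\Gamma_{\theta,k}(\mathcal I_{\mathcal C})+\mathcal I_{\mathcal C}=\mathcal O_M$ at $q\in\mathcal C$ iff some $k\times k$ minor of $J$ is nonzero at $q$, while $\Gamma_{\theta,k}(\mathcal I_{\mathcal C})+\mathcal I_{\mathcal C}\subset\mathcal I_{\mathcal C}$ near $q$ iff every $k\times k$ minor vanishes along $\mathcal C$ near $q$. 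Hence Definition~\ref{def:tadm} is equivalent to $\operatorname{rank}J\equiv k_0$ along $\mathcal C$ near $p$. For $q\in\mathcal C$ the matrix $J(q)$ represents the pairing between the conormal space of $\mathcal C$ at $q$ and the image $\operatorname{Im}\theta(q)\subset T_qM$, so this rank equals $\dim\operatorname{Im}\theta(q)-\dim(\operatorname{Im}\theta(q)\cap T_q\mathcal C)$: the number of independent directions of $\theta$ transverse to $\mathcal C$ at $q$. This is the rigorous content of Remark~\ref{rk:IntAC}.

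Next I would normalise the basis against $\mathcal C$. Since the diagonal fields vanish at $p$, we have $\operatorname{Im}\theta(p)=\langle\partial_{w_{k+1}},\dots,\partial_{w_m}\rangle$, so $\dim(\langle\partial_{w_s}\rangle\cap T_p\mathcal C)=m-k-k_0$; by upper semicontinuity of $q\mapsto\dim(\langle\partial_{w_s}\rangle\cap T_q\mathcal C)$ and the constant-rank condition this dimension is constantly $m-k-k_0$ along $\mathcal C$ near $p$, the regular part of the basis already realises the full rank $k_0$, and consequently $\partial_i(q)\in\langle\partial_{w_s}\rangle_q+T_q\mathcal C$ for every diagonal field $\partial_i$ and every $q\in\mathcal C$ near $p$. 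Hence I may replace each diagonal $\partial_i$ by $\partial_i+\sum_{s>k}f_{i,s}\partial_{w_s}$ with suitable $f_{i,s}\in\mathcal O_p$ to obtain a monomial basis whose diagonal fields are tangent to $\mathcal C$; this is legitimate because $\theta$ is an $\mathcal O_M$-module, so the associated shears of the $\pmb w$-coordinates by coefficients involving the $u_j$ preserve the monomial normal form of Definition~\ref{def:MSD} once the diagonal basis fields are correspondingly corrected. A further linear change of the $\pmb w$-coordinates (also monomiality-preserving) then arranges that $\partial_{w_{k+1}},\dots,\partial_{w_{m-k_0}}$ are tangent to $\mathcal C$ along $\mathcal C$ and that $\partial_{w_{m-k_0+1}},\dots,\partial_{w_m}$ span a complement transverse to it.

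With this normal form in place I would straighten $\mathcal C$ in two steps. Since $\mathcal C$ is transverse to each of $\partial_{w_{m-k_0+1}},\dots,\partial_{w_m}$, the implicit function theorem lets me replace $k_0$ of the generators $g_j$ by coordinates $w_{m-k_0+1}-\psi_{m-k_0+1},\dots,w_m-\psi_m$, with $\psi_s$ independent of $w_s$; the corresponding change of the $\pmb w$-coordinates is again monomiality-preserving (the subspace $\langle\partial_{w_s}:s>m-k_0\rangle$ of transverse directions is stable under it), after which $\mathcal C\subset\{w_{m-k_0+1}=\dots=w_m=0\}$. Restricting $\theta$ to this submanifold yields a monomial distribution of leaf dimension $d-k_0$ with the same exceptional divisor, relative to which $\mathcal C$ is now $\theta$-invariant; so the problem reduces to the case $k_0=0$. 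In that case $\mathcal I_{\mathcal C}$ is generated by functions of $\pmb u$ alone (invariance under all $\partial_{w_s}$), so $\mathcal C$ is the pull-back of a smooth germ $Z\subset\mathbb C^k$ through the origin invariant under the scaling flows of the diagonal fields and with normal crossings with $\{u_1\cdots u_l=0\}$; a direct case analysis according to which $u_j$ are scaled by $\theta$ — the scaled ones that occur in $\mathcal I_Z$ being forced to appear as $u_j$, the unscaled ones (genuine first integrals) straightened using the freedom to relabel first integrals, compatibly with $E$ via the normal-crossings hypothesis — shows that in suitable monomial coordinates $\mathcal I_{\mathcal C}$ is generated by a subset of $u_1,\dots,u_k$. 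Unwinding the two reductions gives $\mathcal I_{\mathcal C}=(u_1^{\epsilon_1},\dots,u_k^{\epsilon_k},w_{k+1}^{\epsilon_{k+1}},\dots,w_m^{\epsilon_m})$ with $\epsilon_i\in\{0,1\}$, as claimed.

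The main obstacle throughout is keeping track of which coordinate changes preserve the monomial normal form: essentially linear changes of the $\pmb w$, translations, relabellings of the $u_j$ through the first integrals, and the ``$\mathcal O_M$-absorbed'' shears of the $\pmb w$ used above (where a shear of a $w$-coordinate is compensated by adding an $\mathcal O_M$-multiple of a regular basis field to a diagonal one). The crux is to verify that the constant-rank property characterising a $\theta$-admissible centre is exactly what makes the straightening of $\mathcal C$ achievable within this restricted class of changes — the obstruction for a non-admissible centre being precisely the phenomenon illustrated by Example~\ref{ex:Kinds}(4).
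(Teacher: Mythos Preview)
Your overall strategy coincides with the paper's: split off the directions of $\theta$ transverse to $\mathcal C$ using the regular basis fields $\partial_{w_s}$, reduce to the $\theta$-invariant case, and there straighten $\mathcal C$ in the $\pmb u$-variables. Your rank reformulation of $\theta$-admissibility is a useful clarification and is implicitly what the paper uses when it asserts ``without loss of generality'' that the $k_0\times k_0$ minor built from the \emph{regular} fields $\partial_{w_{m+1-i}}$ is a unit (the diagonal fields vanish at $p$, so they contribute nothing to the rank there).

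The genuine gap is in your treatment of the invariant case $k_0=0$. You correctly reduce to a smooth germ $Z$ in the $\pmb u$-space, invariant under the diagonal flows, and then invoke a ``direct case analysis according to which $u_j$ are scaled by $\theta$''. This is where the real work lies, and your sketch does not supply it. The issue is: if $f$ is a regular generator of $\mathcal I_Z$ with $\partial_{u_l}f$ a unit, the substitution $\tilde u_l=f$ keeps each $\partial_i=\sum_j\alpha_{i,j}u_j\partial_{u_j}$ in diagonal form \emph{only if} $\partial_i f$ is a scalar multiple of $f$, i.e.\ only if $f$ is a simultaneous eigenfunction for all the $\partial_i$. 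That such generators always exist is exactly Corollary~\ref{cor:InvaBase} (built on Lemma~\ref{lem:InvNSing}): a $\theta$-invariant ideal has generators $f_j$ with $\partial_i f_j=\zeta_{i,j}f_j$, $\zeta_{i,j}\in\mathcal K$. The paper's proof invokes precisely this: choose such an $f_1$, set $\tilde u_l=f_1$, check that $\partial_i$ becomes $\sum_{j\neq l}\alpha_{i,j}\tilde u_j\partial_{\tilde u_j}+\zeta_{i,l}\tilde u_l\partial_{\tilde u_l}$, and iterate. Without this eigenfunction lemma your case analysis is incomplete --- when several $u_j$ lie in a common eigenspace for some $\partial_i$, the invariant $Z$ can be an arbitrary linear subspace of that eigenspace, and it is the eigenfunction property, not any dichotomy between ``scaled'' and ``unscaled'' coordinates, that makes the substitution monomiality-preserving.

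A secondary remark: your adjustment of the diagonal $\partial_i$ by $\sum_s f_{i,s}\partial_{w_s}$ followed by ``shears of the $\pmb w$-coordinates'' is stated somewhat circularly. The paper performs the equivalent manoeuvre in the opposite order --- first $\tilde w_{m+1-j}=f_j$ for $j\le k_0$, then replace each remaining $\partial_i$ by $\partial_i-\sum_{j\le k_0}\partial_i(\tilde w_{m+1-j})\partial_{\tilde w_{m+1-j}}$ --- and simply verifies by computation that the result is again a monomial basis. Your justification (``$\theta$ is an $\mathcal O_M$-module, so the shears preserve monomial form once the diagonal fields are corrected'') is the right intuition but needs that explicit computation to stand on its own.
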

\begin{proof}
Let us fix a monomial basis $\{\partial_1,\ldots, \partial_d\}$ and a monomial coordinate system $(\pmb{u},\pmb{w})$ centered at $p$. We divide in two cases depending on the relation between $\mathcal{C}$ and $\theta$:

\medskip
\noindent
\emph{Case I: The center $\mathcal{C}$ is invariant by $\theta$:} In this case, by corollary \ref{cor:InvaBase}, there exists a system of generators $(f_1,\ldots, f_n)$ of $\mathcal{I}_{\mathcal{C}}\cdot \mathcal{O}_p$ such that $\partial_i(f_j) = \zeta_{i,j}f_j$, with $\zeta_{i,j}\in \mathcal{K}$. In particular, if any of the vector fields $\partial_i$ is regular, say $\partial_i = \partial_{w_m}$, then all functions $\zeta_{i,j}=0$ for all $j$ and $f_j$ is independent of the $w_m$ coordinate. Thus, apart from taking the quotient $\mathcal{O}_p/(\pmb{w})$, we can assume that the monomial coordinate is just $\pmb{u}$ and all vector fields $\partial_i$ are singular, i.e. they are equal to $\sum \alpha_{i,j}u_j\partial_{u_j}$.

Now, since $\mathcal{C}$ is a regular sub-variety, we can suppose that $f_1$ is regular. Thus, there exists $u_l$ such that $\partial_{u_l} f_1$ is a unit. Let $\widetilde{u}_l = f_1$ and $\widetilde{u}_i =u_i$ otherwise. After this change of coordinates we obtain:
\[
\partial_i = \sum^k_{j\neq l} \alpha_{i,j} \widetilde{u}_j \partial_{\widetilde{u}_j} + \zeta_{i,l}\widetilde{u}_l \partial_{\widetilde{u}_l} 
\]
which implies that $\widetilde{w}_l \in \mathcal{I}_{\mathcal{C}}$ and $(\pmb{\widetilde{u}},\pmb{\widetilde{w}})$ is a monomial coordinate system. Applying the above argument a finite number of times, we conclude the proof of the lemma in this case.
\medskip
\noindent
\emph{Case II: The center $\mathcal{C}$ is not invariant by $\theta$:} There exists a maximal natural number $k_0  > 0$ such that $\Gamma_{\theta,k_0}(\mathcal{I}_{\mathcal{C}})+\mathcal{I}_{\mathcal{C}} = \mathcal{O}_M$. So, without loss of generality, there exists function $(f_1,...,f_{k_0})$ in $\mathcal{I}_{\mathcal{C}}\cdot \mathcal{O}_p$ such that:
\[
%\begin{array}{cc}
 det
\left\| \begin{array}{ccc}
\partial_1(f_1) & \ldots & \partial_{1}(f_{k_0}) \\
 \vdots & \ddots & \vdots \\
 \partial_{k_0}(f_1) & \ldots & \partial_{k_0}(f_{k_0}) 
 \end{array}
\right\|  \text{ is a unit.}
% \end{array}
\]
where $\partial_i = \partial_{w_{m+1-i}}$. Now, without loss of generality we can assume that $\partial_i(f_i)$ is a unit. We consider the change of coordinates $\widetilde{w}_{m+1-i} = f_i$ for $i\leq k_0$. After this change of coordinates we get:
\[
\begin{aligned}
\partial_i &= U_i \partial_{\widetilde{w}_{m+1-i}} + \sum^{k_0}_{j \neq i, j=1} f_{i,j} \partial_{\widetilde{w}_{m+1-j}} \text{ for } i\leq k_0
\end{aligned}
\]
for units $U_i$. So, the set of vector fields $\{\widetilde{\partial}_i\}$ given by $\widetilde{\partial}_i = \partial_{\widetilde{w}_{m+1-i}}$ for $i\leq k_0$ and $\widetilde{\partial}_i = \partial_{i} - \sum_{j=1}^{k_0} \partial_i (\widetilde{w}_{m+1-j})\partial_{\widetilde{w}_{m+1-i}}$ otherwise, is a monomial basis of $\theta$ at $p$. To finish the proof, we just need to note that, over the quotient $\mathcal{O}_p / (w_{m-k_0}, \ldots, w_m)$, we are in the same conditions of Case I.
\end{proof}
We are now ready to prove Proposition \ref{prop:AdmCenter}:
\begin{proof}[Proof of Proposition \ref{prop:AdmCenter}] Note that conditions $[i]$, $[ii]$ and $[iv]$ of the definition of monomial singular distributions are clearly true for $\widetilde{\theta}$. In order to prove that condition $[iii]$ also holds, fix a monomial basis $\{\partial_1, \ldots, \partial_d\}$. By Lemma \ref{lem:AdmissibleCenter} at each point $p$ in the center of blowing-up $\mathcal{C}$, there exists a monomial coordinate system $(\pmb{u},\pmb{w})$ centered at $p$ such that $\mathcal{I}_{\mathcal{C}}$ is locally generated by $(u_1^{\epsilon_1}, \ldots, u_k^{\epsilon_k},w_{k+1}^{\epsilon_{k+1}},\ldots, w_m^{\epsilon_m})$ with $\epsilon_i \in \{0,1\}$. We now divide the proof in two cases depending whether we consider a $u$-chart of a $w$-chart:

\medskip
\noindent
\emph{The origin of a $u$-chart:} Without loss of generality, we may suppose that $\epsilon_1=1$ and we consider the $u_1$-chart. The origin of this chart has coordinate system $(\pmb{x},\pmb{y})$ given by:
\[
u_1=x_1, \quad u_i = x_1^{\epsilon_i}x_i, \quad w_i = x_1^{\epsilon_i}y_i 
\]
In this case, the transforms of the vector fields $\partial_i$ are given by:
\[
\begin{aligned}
\partial_i &= \alpha_{i,1}x_1\partial_{x_1} + \sum_{j=2}^k (\alpha_{i,j} - \epsilon_j \alpha_{i,1})x_j \partial_{x_j}-  \sum_{j=k+1}^m \epsilon_j \alpha_{j,1} y_j \partial_{y_j}  \\
\partial_i &= \frac{1}{x_1^{\epsilon_i}}\partial_{y_i}  
\end{aligned}
\]
and it is clear that $\widetilde{\theta}$ is $\mathcal{K}$-monomial at the origin of this chart.

\medskip
\noindent
\emph{The origin of a $w$-chart:} Without loss of generality, we may suppose that $\epsilon_m=1$ and we consider the $w_m$-chart. The origin of this chart has coordinate system $(\pmb{x},\pmb{y},z)$ given by:
\[
w_m =z, \quad u_i = z^{\epsilon_i}x_i, \quad w_i = z^{\epsilon_i}y_i 
\]
In this case, the transforms of the vector fields $\partial_i$ are given by one of the following:
\[
\begin{aligned}
\partial_1 &= \frac{1}{z} \left( z\partial_z - \sum \epsilon_i x_i\partial_{x_i} + \sum \epsilon_i y_i\partial_{y_i} \right)\\
\partial_i &= \sum_{j=1}^k \alpha_{i,j}x_j \partial_{x_j}  \\
\partial_i &= \frac{1}{z^{\epsilon_i}}\partial_{y_i} 
\end{aligned}
\]
and it is clear that $\widetilde{\theta}$ is $\mathcal{K}$-monomial at the origin of this chart.

\medskip
Now, by Lemma \ref{lem:Gmonloc}, we conclude that $\widetilde{\theta}$ is monomial over all the pre-image of $p$. Since the choice of $p$ in the center $\mathcal{C}$ was arbitrary, the lemma follows.
\end{proof}
\section{Main invariants of a foliated ideal sheaf}
\label{sec:CIS}
In this section we define a pair of invariant $(\nu,type)$ just as in \cite[Section 3.1]{Bel1}. Given a foliated ideal sheaf $(M,\theta,\mathcal{I},E)$, let us consider the sequence of ideal sheaves $\left(H(\theta,\mathcal{I},n)\right)_{n \in \mathbb{N}}$ which is defined recursively as:
\[
\begin{aligned}
H(\theta,\mathcal{I},0) \phantom{+1}&:= \mathcal{I}  \\
H(\theta,\mathcal{I},n+1) &:= H(\theta, \mathcal{I},n)+ \theta[H(\theta,\mathcal{I},n)]  
\end{aligned}
\]
We now define the main pair of invariants of a foliated ideal sheaf, which was first introduced in \cite{BeloT}:
\begin{definition}
Given a foliated ideal sheaf $(M,\theta,\mathcal{I},E)$ and a point $p$ in $M$, its \textit{tg-order} $\nu =\nu(p)$ at $p$ is the smallest natural number such that $ H(\theta,\mathcal{I},\nu) = H(\theta,\mathcal{I},\nu+1)$ (which always exists since $\mathcal{O}_p$ is a noetherian ring). Moreover, the foliated ideal sheaf is of \textit{type $1$} at a point $p$ if $ H(\theta,\mathcal{I},\nu) \cdot \mathcal{O}_p = \mathcal{O}_p$ and of \textit{type $2$} otherwise. 
\label{def:inv}
\end{definition}
In what follows, we consider as main invariant the pair $(\nu,type)$ (ordered lexicographically). Note that this invariant is upper semi-continuous. If the type is one, we can work with a particular normal form: 

\begin{lemma}[Weierstrass-Thirnhaus form]
Let $(M,\theta,\mathcal{I},E)$ be a foliated ideal-sheaf and $p$ a point in $M$ where the type is $1$ and the tg-order $\nu := \nu(p)$ is positive. Then, there exists a coordinate system $(\pmb{u},v,\pmb{w})$ of $p$ and a set of generators $(g_1,\ldots,g_n)$ of $\mathcal{I}\cdot \mathcal{O}_p$ such that the vector field $\partial_v$ belongs to $\theta \cdot \mathcal{O}_p$ and:
\begin{equation}\label{eq:basicnormaform}
 \begin{aligned}
  g_1 &= v^{\nu} U + \sum^{\nu-2}_{j=0} v^j a_{1,j}(\pmb{u},\pmb{w}) \text{ where $U$ is a unit, and}\\
  g_i &= v^{\nu} \bar{g}_i + \sum^{\nu-1}_{j=0} v^j a_{i,j}(\pmb{u},\pmb{w})
 \end{aligned}
\end{equation}
%Furthermore, the coordinate system $(\pmb{u},v,\pmb{w})$ can be chosen to be monomial whenever $\theta$ is $\mathcal{K}$-monomial.
\label{lem:BasicNormalForm}
\end{lemma}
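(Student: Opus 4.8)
The plan is to produce the Weierstrass–Tschirnhaus form by first finding one distinguished generator of $\mathcal{I}\cdot\mathcal{O}_p$ that is regular of order exactly $\nu$ in a direction tangent to $\theta$, then applying the Weierstrass preparation theorem in that direction, and finally performing a Tschirnhaus-type change of coordinates to kill the degree-$(\nu-1)$ term in the distinguished generator while leaving the remaining generators expressed as polynomials of degree $<\nu$ in the same variable (after reducing them modulo the distinguished one). The key input is the hypothesis that the type at $p$ is $1$, which by Definition \ref{def:inv} means $H(\theta,\mathcal{I},\nu)\cdot\mathcal{O}_p=\mathcal{O}_p$; unwinding the recursive definition of $H$, this says precisely that there is some $f\in\mathcal{I}\cdot\mathcal{O}_p$ and a composition of at most $\nu$ derivations from $\theta\cdot\mathcal{O}_p$ whose application to $f$ is a unit at $p$, while no such composition of length $<\nu$ produces a unit (that is what $\nu$ being minimal with $H(\theta,\mathcal{I},\nu)=H(\theta,\mathcal{I},\nu+1)$, together with positivity of $\nu$ and type $1$, gives us).

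\smallskip\noindent\textbf{Step 1: locate a good generator and a good direction.} Since $H(\theta,\mathcal{I},\nu)\cdot\mathcal{O}_p=\mathcal{O}_p$ but $H(\theta,\mathcal{I},\nu-1)\cdot\mathcal{O}_p\neq\mathcal{O}_p$, I would choose $f\in\mathcal{I}\cdot\mathcal{O}_p$ and derivations $\partial^{(1)},\dots,\partial^{(\nu)}\in\theta\cdot\mathcal{O}_p$ so that $\partial^{(\nu)}\cdots\partial^{(1)}(f)$ is a unit at $p$. A standard argument (replacing the $\partial^{(j)}$ by a single generic $\mathbb{K}$-linear combination $\partial$ of the generators of $\theta\cdot\mathcal{O}_p$, using that the relevant polynomial in the coefficients does not vanish identically) lets me assume there is a single vector field $\partial\in\theta\cdot\mathcal{O}_p$ with $\partial^{\nu}(f)$ a unit at $p$, while $\partial^{j}(\mathcal{I})\cdot\mathcal{O}_p$ is contained in $\mathfrak{m}_p$ for $j<\nu$. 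Then $\partial$ is regular at $p$ (otherwise $\partial^{\nu}(f)\in\mathfrak{m}_p$), so I can pick coordinates $(\pmb{u},v,\pmb{w})$ with $\partial=\partial_v$; this is the coordinate in which we do Weierstrass preparation, and by construction $\partial_v^{\nu}(f)(p)\neq0$ while $\partial_v^{j}(g)(p)=0$ for every $g\in\mathcal{I}\cdot\mathcal{O}_p$ and $j<\nu$. In particular $f$ is $v$-regular of order exactly $\nu$.

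\smallskip\noindent\textbf{Step 2: Weierstrass preparation and reduction of the other generators.} Applying the Weierstrass preparation theorem to $f$ in the variable $v$, I get $f=U\cdot g_1$ with $U$ a unit and $g_1=v^{\nu}+\sum_{j=0}^{\nu-1}v^{j}b_j(\pmb{u},\pmb{w})$ a Weierstrass polynomial; replace $f$ by $g_1$ in the generating set. For any other generator $g$, the Weierstrass division theorem writes $g=q\,g_1+\sum_{j=0}^{\nu-1}v^{j}a_j(\pmb{u},\pmb{w})$, and since $g,g_1\in\mathcal{I}\cdot\mathcal{O}_p$ the remainder is too; so after this reduction I may assume every $g_i$ with $i\geq2$ is a polynomial in $v$ of degree $\leq\nu-1$ with coefficients in $\mathcal{O}_{p}/(v)$, which is the second line of \eqref{eq:basicnormaform} (I allow the leading $v^{\nu}\bar g_i$ term to be zero, i.e. I can even take $\bar g_i=0$, but keeping it does no harm). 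The crucial point that survives the division is that the condition $\partial_v^{j}(g_i)(p)=0$ for $j<\nu$, established in Step 1, forces $b_{\nu-1}(\pmb{0})=0$; more importantly it is stable, which feeds into the next step.

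\smallskip\noindent\textbf{Step 3: Tschirnhaus shift.} Finally I perform the coordinate change $\tilde v=v+\frac{1}{\nu}b_{\nu-1}(\pmb{u},\pmb{w})$, which preserves the property that $\partial_{\tilde v}=\partial_v\in\theta\cdot\mathcal{O}_p$ (the shift is in the $\pmb w$-directions only, and $\theta\cdot\mathcal O_p$ was tangent to — indeed contained $\partial_v$), and eliminates the $v^{\nu-1}$ coefficient of $g_1$, giving $g_1=\tilde v^{\nu}U'+\sum_{j=0}^{\nu-2}\tilde v^{j}a_{1,j}(\pmb u,\pmb w)$ with $U'$ a unit; re-expanding $g_2,\dots,g_n$ in powers of $\tilde v$ keeps them of degree $\leq\nu$ (and degree $\leq\nu-1$ after one more Weierstrass division against the new $g_1$ if one wants). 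The remaining bookkeeping — checking $U'$ is still a unit, that no denominators are introduced, and that $\partial_{\tilde v}\in\theta\cdot\mathcal O_p$ — is routine. \textbf{The main obstacle} I anticipate is Step 1: extracting from the purely ideal-theoretic statement $H(\theta,\mathcal{I},\nu)\cdot\mathcal{O}_p=\mathcal{O}_p$ a \emph{single} generator and a \emph{single} vector field in $\theta\cdot\mathcal O_p$ realizing the order $\nu$ simultaneously — the genericity argument over the (possibly infinite) field $\mathbb K$ and the verification that choosing a generic linear combination of a finite generating set of $\theta\cdot\mathcal O_p$ does not lower the order — is where the real content lies; everything after that is the classical Weierstrass–Tschirnhaus machinery.
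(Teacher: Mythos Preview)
Your proposal is correct and follows essentially the same route as the paper. The paper's proof is much terser: it asserts (without the genericity argument you supply) that type~$1$ gives coordinates $(\pmb u,v,\pmb w)$ with $\partial_v\in\theta\cdot\mathcal O_p$ and $\partial_v^{\nu}g_1$ a unit, then in one step invokes the implicit function theorem on $\partial_v^{\nu-1}g_1$ to obtain a change $\tilde v=V(\pmb u,v,\pmb w)$ making the $(\nu-1)$-st derivative of $g_1$ vanish along $\tilde v=0$, and finally notes $\partial_v=U\,\partial_{\tilde v}$ for a unit $U$, so $\partial_{\tilde v}\in\theta\cdot\mathcal O_p$. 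Your Weierstrass preparation plus explicit Tschirnhaus shift $\tilde v=v+\tfrac{1}{\nu}b_{\nu-1}$ is the classical alternative to that IFT step, and your observation that involutivity lets one symmetrize $\partial^{(\nu)}\!\cdots\partial^{(1)}f$ modulo $H(\theta,\mathcal I,\nu-1)\subset\mathfrak m_p$ (hence replace the string of derivations by a single generic one) fills in exactly the point the paper leaves implicit. One small wording slip: the Tschirnhaus shift is a translation of $v$ by a function of $(\pmb u,\pmb w)$, not a shift ``in the $\pmb w$-directions''; but your conclusion $\partial_{\tilde v}=\partial_v$ is correct regardless.
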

\begin{proof}
By the definition of type it is clear that there exists a coordinate system $(\pmb{u},v,\pmb{w})$ of $p$ such that the vector field $\partial_v$ belongs to $\theta \cdot \mathcal{O}_p$ and, for any set of generators $(g_1,\ldots,g_n)$ of $\mathcal{I}$, without loss of generality, the function $\partial^{\nu}_{v} g_1$ is a unit. Furthermore, by the implicit function Theorem, there is a change of coordinates $(\widetilde{\pmb{u}},\widetilde{v},\widetilde{\pmb{w}}) = (\pmb{u},V(\pmb{u},v,\pmb{w}),\pmb{w})$ such that $\partial^{\nu-1}_{\widetilde{v}}g_1(\widetilde{\pmb{u}},0,\widetilde{\pmb{w}}) \equiv 0$. This clearly implies that equations \eqref{eq:basicnormaform} hold. Finally, since $ \partial_v =  U \partial_{\widetilde{v}}$ for some unit $U$, we conclude that $\partial_{\widetilde{v}}$ belongs to $\theta \cdot \mathcal{O}_p$.
\end{proof}
\section{Main Theorem \ref{thm:main2}: Overview of the proof}
\label{sec:over}
In the remainder of the article, we prove Theorem \ref{thm:main2}. We will make the ideal sheaf $\mathcal{I}$ principal by a sequence of $\theta$-admissible local blowings-up. Theorem \ref{thm:main} also follows since, by Proposition \ref{prop:AdmCenter}, $\theta$-admissible blowings-up preserve the $\mathcal{K}$-monomiality of the singular distribution $\theta$. In what follows, we always assume that the singularities of $\theta$ are log-canonical (e.g. when $\theta$ is $\mathcal{K}$-monomial). This hypothesis is only explicitly used in Lemma \ref{lem:Hir1} below and can also be removed if we consider a different kind of transform of $\theta$ instead of the strict transform (see the notion of analytic strict transform introduced in \cite{BeloT}).  Our proof of Theorem \ref{thm:main2} has three main steps:

\medskip\noindent
{\bf Step 1.} Reduction to type $1$ at every point.

\medskip
The following proposition will be proved in Section \ref{sec:type}.
\begin{proposition}
(First Step - Reduction of type, c.f \cite[Proposition 5.2]{Bel1}) Let $(M,\theta,\mathcal{I},E)$ be a $d$-foliated ideal sheaf and $p$ a point of $M$ where the type is $2$. Then, there exists a neighborhood $M_0$ of $p$ and sequence of $\theta$-admissible blowings-up of order $\geq1$:
\[
\begin{tikzpicture}
  \matrix (m) [matrix of math nodes,row sep=3em,column sep=4em,minimum width=1em]
  {(M_r,\theta_r,\mathcal{I}_r,E_r) & \cdots & (M_0,\theta_0,\mathcal{I}_0,E_0)\\};
  \path[-stealth]
    (m-1-1) edge node [above] {$\sigma_r$} (m-1-2)
    (m-1-2) edge node [above] {$\sigma_1$} (m-1-3);
\end{tikzpicture}
\]
such that the type of $(M_r,\theta_r,\mathcal{I}_r,E_r)$ is one at every point. Moreover, the tangency order $\nu$ does not increase in any point.
\label{prop:Hia}
\end{proposition}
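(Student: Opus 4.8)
The plan is to reduce the statement to resolving the \emph{stabilised tangency ideal} and to exploit that the latter is $\theta$-invariant. Fix the point $p$; by upper semicontinuity of $\nu$ pick a connected neighbourhood $M_0$ of $p$ on which $\nu\le\nu(p)=:N$, and set $J:=H(\theta,\mathcal{I},N)$ over $M_0$. The recursion defining $H$ together with the choice of $N$ gives $H(\theta,\mathcal{I},N)=H(\theta,\mathcal{I},N+1)$, i.e.\ $\theta[J]\subseteq J$; so $J$ is a $\theta$-invariant ideal sheaf, and since $\mathcal{I}=H(\theta,\mathcal{I},0)\subseteq J$ we have $V(J)\subseteq V(\mathcal{I})$ and, for $q\in M_0$, the foliated ideal sheaf is of type $1$ at $q$ precisely when $J\cdot\mathcal{O}_q=\mathcal{O}_q$, that is when $q\notin V(J)$. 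We may assume $\mathcal{I}$ is divisible by no non-unit (such a factor persists in every transform and would already defeat Theorem \ref{thm:main2}); then the same holds for $J\supseteq\mathcal{I}$, so $V(J)$ has codimension $\ge 2$. It thus suffices to build a sequence of $\theta$-admissible blowings-up of order $\ge 1$ after which the birational transform of $J$ is the unit ideal, while keeping $\nu$ from increasing.

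The resolution of $J$ will use only $\theta$-invariant centres. Since $J$ is $\theta$-invariant and $\theta$ is $\mathcal{K}$-monomial, Corollary \ref{cor:InvaBase} furnishes, at each point, a monomial coordinate system $(\pmb{u},\pmb{w})$, a monomial basis $\{\partial_1,\dots,\partial_d\}$ and generators $f_1,\dots,f_n$ of $J$ with $\partial_i f_j=\zeta_{i,j}f_j$; for the regular members $\partial_i=\partial_{w_\bullet}$ of the basis the eigenvalue vanishes, so each $f_j$ is independent of $w_{k+1},\dots,w_m$. Hence, locally, $J=J_0\cdot\mathcal{O}$ with $J_0\subseteq\mathbb{K}\{u_1,\dots,u_k\}$, and $V(J)$ is a cylinder (in the $\pmb{w}$-directions) over a subvariety of $\pmb{u}$-space invariant under the commutative linear local group generated by the singular part $\theta_u=\langle\,\sum_j\alpha_{i,j}u_j\partial_{u_j}\,\rangle$. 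Applying a canonical resolution algorithm (cf.\ \cite{Ko}) to $J_0$ in the variables $\pmb{u}$, relative to the SNC divisor $E=\{u_1\cdots u_l=0\}$, produces centres which — by functoriality/equivariance of canonical resolution — are $\theta_u$-invariant and, being $\pmb{w}$-cylindrical, $\theta$-invariant; by Example \ref{ex:Kinds}(1) the corresponding blowings-up are $\theta$-admissible, their centres lie in $V(J)\subseteq V(\mathcal{I})$ so they are of order $\ge 1$, and by Proposition \ref{prop:AdmCenter} the transform of $\theta$ stays $\mathcal{K}$-monomial, so the construction iterates. By termination of the underlying resolution of $J_0$, after finitely many steps the birational transform of $J$ is the unit ideal and its zero locus is empty.

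To control $\nu$ I would prove, for a $\theta$-invariant blowing-up $\sigma$ of order $\ge 1$, the transformation rule $H(\widetilde\theta,\mathcal{I}^c,n)=\big(H(\theta,\mathcal{I},n)\big)^c$ for every $n$, where $(-)^c$ denotes birational transform. The mechanism: $\widetilde\theta$ is tangent to the exceptional divisor $F$, and for a $\theta$-invariant centre the lifts of a monomial basis do not acquire a pole along $F$, hence they generate $\widetilde\theta$; writing $f^{\ast}=x\,f^c$ with $x$ a local equation of $F$, this yields $\widetilde\partial_0(f^c)\equiv(\partial_0 f)^c\ (\mathrm{mod}\ \mathcal{I}^c)$ whenever $f\in\mathcal{I}$ and $\widetilde\partial_0\in\widetilde\theta$ lifts $\partial_0\in\theta$, so that $\widetilde\theta[\mathcal{I}^c]+\mathcal{I}^c=(\theta[\mathcal{I}])^c+\mathcal{I}^c$ and $H(\widetilde\theta,\mathcal{I}^c,1)=(H(\theta,\mathcal{I},1))^c$. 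Applying this with $\mathcal{I}$ replaced successively by $H(\theta,\mathcal{I},n)$ — which also has order $\ge 1$ at the centre, since $V(H(\theta,\mathcal{I},n))\supseteq V(J)$ — gives the rule by induction on $n$. As $(H(\theta,\mathcal{I},n))^c$ is already constant for $n\ge N$, so is $H(\widetilde\theta,\mathcal{I}^c,n)$; composing along the whole sequence yields $\nu_r(q)\le\nu(\sigma_r\circ\cdots\circ\sigma_1(q))$ for every $q\in M_r$ (so $\nu$ does not increase at any point), and for $n\ge N$ the ideal $H(\theta_r,\mathcal{I}_r,n)$ equals the $r$-fold birational transform of $J$, namely $\mathcal{O}_{M_r}$; hence $(M_r,\theta_r,\mathcal{I}_r,E_r)$ is of type $1$ at every point.

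I expect the main obstacle to be the step carried out in the second paragraph: producing a resolution of $J$ that uses only $\theta$-invariant centres. In characteristic zero this should follow from equivariance of canonical resolution under the smooth local action of the group generated by $\theta_u$, but two points require care — the group action is only analytic (its exponents lie in $\mathcal{K}$, not necessarily in $\mathbb{Q}$), and the monomial coordinate systems are local and must be re-selected after each blowing-up, which one arranges using Lemma \ref{lem:Gmonloc} and Proposition \ref{prop:AdmCenter}. The transformation rule of the third paragraph, and in particular the ``no pole'' property of lifts of a monomial basis under a $\theta$-invariant centre, is the remaining technical ingredient; this is exactly where $\theta$-\emph{invariance} (and not mere $\theta$-admissibility) of the centres is essential — for general $\theta$-admissible centres $\nu$ can genuinely increase, as in Example \ref{ex:main}.
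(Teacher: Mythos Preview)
Your proposal follows essentially the same three-part strategy as the paper: (i) set $J=H(\theta,\mathcal{I},N)$, which is $\theta$-invariant; (ii) resolve $J$ by a sequence of blowings-up whose centres are $\theta$-invariant (hence $\theta$-admissible and of order $\ge 1$, since $V(J)\subset V(\mathcal{I})$); (iii) prove the transformation rule $H(\widetilde\theta,\mathcal{I}^c,n)=(H(\theta,\mathcal{I},n))^c$ under $\theta$-invariant blowings-up, and iterate it to conclude that the final $H(\theta_r,\mathcal{I}_r,N)$ is the unit ideal while $\nu$ never increases. Your computation in the third paragraph is exactly the paper's Lemma \ref{lem:25}.

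The one place where your route diverges is step (ii), and here the paper's argument is both simpler and more general. You invoke the $\mathcal{K}$-monomial structure (Corollary \ref{cor:InvaBase}) to reduce $J$ to an ideal $J_0$ in the $\pmb{u}$-variables, and then appeal to equivariance of canonical resolution under the torus-like action generated by $\theta_u=\langle\sum_j\alpha_{i,j}u_j\partial_{u_j}\rangle$; as you correctly flag, this step is delicate because the group is only analytic and the monomial charts must be re-selected after each blow-up. The paper avoids all of this via Lemma \ref{lem:Fol}: for a \emph{reduced} ideal, $\theta$-invariance is equivalent to \emph{geometric} invariance (every leaf meeting the zero set lies in it). At any point of a prospective centre, the leaf is locally the plane spanned by the regular directions $\partial_{x_1},\dots,\partial_{x_t}$ --- the singular generators vanish at the point and contribute nothing to the leaf there. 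Since $J$ (and its successive transforms, by Lemma \ref{lem:Hir1}) is $\theta$-invariant, its generators may be chosen independent of $\pmb{x}$; functoriality of canonical resolution under the smooth projection $(\pmb{x},\pmb{y})\mapsto\pmb{y}$ then forces the centre to be $\pmb{x}$-cylindrical, hence to contain the leaf. Thus geometric invariance, and with it full $\theta$-invariance, comes for free --- no separate $\theta_u$-equivariance is needed, and the argument uses only the log-canonical hypothesis (so that $\widetilde\theta=\sigma^\ast\theta$, cf.\ Lemma \ref{lem:Hir1}), not $\mathcal{K}$-monomiality. Your codimension-$\ge 2$ observation about $V(J)$ plays no role in either argument.
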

The above result is first proved in \cite{BeloT}. For completeness, we provide a proof in here but we follow the more direct approach of \cite[Proposition 5.2]{Bel1}, where the theorem is proved under the additional hypothesis that the singular distributions $\theta$ has leaf dimension one (the proof in here is, mutatis mutandis, the same of \cite[Proposition 5.2]{Bel1}). The result is a consequence of the functoriality of resolution of singularities (see, e.g \cite{BM2}, for a statement of the functorial property).

Note that a point where the type is one satisfies the conclusion of Lemma \ref{lem:BasicNormalForm}. In this case, we will say that $(M,\theta,\mathcal{I},E)$ is in \textit{Weierstrass-Thirnhaus form} at $p$, i.e. there exists a coordinate system $(\pmb{u},v,\pmb{w})$ of $p$ and a set of generators $(g_1,\ldots,g_n)$ of $\mathcal{I}\cdot \mathcal{O}_p$ such that the vector field $\partial_v$ belongs to $\theta \cdot \mathcal{O}_{p}$ and the functions $g_i$ respect equation \eqref{eq:basicnormaform}.

\medskip\noindent
{\bf Step 2.} Reduction to prepared normal form.

\medskip
The following proposition will be proved in Section \ref{sec:prep}. 
 
\begin{proposition}[Second Step - Prepared normal form]\label{prop:prepnormal}
Suppose that the type of $(M,\theta,\mathcal{I},E)$ is $1$ at every point and that $\nu$ takes a maximal value $\nu_{max}>0$ at $M$. Let $\Sigma$ be the maximal locus set of $\nu$. Furthermore, suppose that Theorem $\ref{thm:main2}$ holds for any foliated ideal-sheaf $(N,\omega,\mathcal{J},F)$ with $dim N < dim M$. Then, by a finite collection of local $\theta$-admissible blowings-up, we can reduce to the case that, for every point $q\in \Sigma$, the foliated ideal sheaf $(M,\theta,\mathcal{I},E)$ has Weierstrass-Thirnhaus form of Lemma \ref{lem:BasicNormalForm}, where the coefficients $a_{i,j}$ satisfies the following additional condition:
\[
a_{i,j}(\pmb{u},\pmb{w}) = \pmb{u}^{\pmb{r}_{i,j}} b_{i,j}(\pmb{u},\pmb{w})
\]
where $b_{i,j}(\pmb{u},\pmb{w})$ is either a unit (and $\pmb{r}_{i,j} \neq 0$) or zero. Finally, the blowings-up involved do not increase the value of $\nu$ over any point.
\end{proposition}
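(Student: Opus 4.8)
The plan is to reduce to the Weierstrass--Tschirnhaus form at each point of $\Sigma$ and then \emph{monomialize the coefficients $a_{i,j}$ by resolving an auxiliary ideal on the transverse slice $\{v=0\}$ and pulling that resolution back to $M$ along the $\partial_v$-direction}. So fix $q\in\Sigma$. Since $\nu(q)=\nu_{max}>0$, Step~1 (Proposition~\ref{prop:Hia}) lets us assume the type is $1$ everywhere, and Lemma~\ref{lem:BasicNormalForm} provides a coordinate system $(\pmb{u},v,\pmb{w})$ centered at $q$ with $\partial_v\in\theta\cdot\mathcal O_q$ and generators $(g_1,\dots,g_n)$ of $\mathcal I\cdot\mathcal O_q$ of the form \eqref{eq:basicnormaform}; when $\theta$ is $\mathcal K$-monomial one moreover arranges $(\pmb{u},v,\pmb{w})$ to be a monomial coordinate system. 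The crucial structural observation is that the coefficients $a_{i,j}=a_{i,j}(\pmb{u},\pmb{w})$ do not depend on $v$: they are pulled back, via the local projection $\pi\colon(\pmb{u},v,\pmb{w})\mapsto(\pmb{u},\pmb{w})$, from functions on the smooth hypersurface $N:=\{v=0\}$, which has dimension $\dim M-1$. After shrinking the neighborhood of $q$ we may also assume that $\partial_v^{\,\nu_{max}}g_1$ (which equals $\nu_{max}!\,U$ modulo $v$, with $U$ a unit) is nonvanishing.

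Next I would set up and solve the lower-dimensional problem. Since $\partial_v\in\theta$, the foliation $\theta$ descends to a foliation $\overline{\theta}$ on $N$ of leaf dimension $d-1$ (take a local basis $\{\partial_v,\eta_1,\dots,\eta_{d-1}\}$ of $\theta$, replace each $\eta_l$ by $\eta_l-\eta_l[v]\,\partial_v$, and restrict to $N$), and $\overline{\theta}$ lies in a class of singularities to which Theorem~\ref{thm:main2} applies — again $\mathcal K$-monomial when $\theta$ is, by Definition~\ref{def:MSD}, and in the general setting of Theorem~\ref{thm:main2} this restriction-stability should be read into the hypothesis on the class. Put $\overline{E}:=E\cap N$, and let $\overline{f}$ be the product of those $a_{i,j}$ which are not identically zero, viewed as a function on $N$. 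The inductive hypothesis (Theorem~\ref{thm:main2} in dimension $<\dim M$) applied to $(N,\overline{\theta},(\overline{f}),\overline{E})$ near $q$ yields a finite collection of sequences of $\overline{\theta}$-admissible local blowings-up after which $\overline{f}$ is, in the transformed monomial coordinates of $N$, an exceptional monomial times a unit. Since in a regular local ring every divisor of a monomial times a unit is again such, each individual $a_{i,j}$ then has the form $\pmb{u}^{\pmb{r}_{i,j}}b_{i,j}$ with $b_{i,j}$ a unit or zero.

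It remains to transport these modifications back to $M$. Each center $\overline{\mathcal C}\subset N$ produced above is lifted to the cylinder $\mathcal C:=\pi^{-1}(\overline{\mathcal C})\subset M$, which contains the full $v$-direction, so that locally $\mathcal C=\overline{\mathcal C}\times\{v\text{-line}\}$ and its blowing-up is $\overline{\sigma}\times\mathrm{id}$. I would then verify three things. First, the blowing-up of $\mathcal C$ is $\theta$-admissible: decomposing the $\overline{\theta}$-admissibility of $\overline{\mathcal C}$ as $\overline{\theta}=\overline{\theta}_{\mathrm{inv}}\oplus\overline{\theta}_{\mathrm{tr}}$ in the sense of Remark~\ref{rk:IntAC}, the lifted decomposition of $\theta$ has invariant part $\langle\partial_v\rangle$ together with the lift of $\overline{\theta}_{\mathrm{inv}}$, and totally transverse part the lift of $\overline{\theta}_{\mathrm{tr}}$, so Definition~\ref{def:tadm} holds; hence by Proposition~\ref{prop:AdmCenter} (resp.\ the class-invariance hypothesis of Theorem~\ref{thm:main2}) the transform $\widetilde{\theta}$ is again $\mathcal K$-monomial (resp.\ in the given class) and $\partial_v\in\widetilde{\theta}$. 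Second, the tg-order does not increase: $g_1$ has a pure $v^{\nu_{max}}$ term with unit coefficient, so $\mathcal C\not\subset\operatorname{supp}\mathcal I$ and one takes total transforms; because $\partial_v$ commutes with the cylinder blowing-up and $\partial_v^{\,\nu_{max}}g_1$ is nonvanishing, $\partial_v^{\,\nu_{max}}(\sigma^{*}g_1)$ is nonvanishing, so $H(\widetilde{\theta},\widetilde{\mathcal I},\nu_{max})$ stays the unit ideal, which keeps $\nu\le\nu_{max}$ and the type equal to $1$; moreover $\sigma^{*}g_1=v^{\nu_{max}}\sigma^{*}U+\sum_{j\le\nu_{max}-2}v^{j}a_{1,j}^{*}$ is again in Weierstrass--Tschirnhaus form with the coefficients $a_{1,j}^{*}$ already prepared. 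Third, over each point of the transform of $\Sigma$ one has $\nu=\nu_{max}>0$, and if some coefficient $a_{i,j}$ with $j<\nu$ were a unit there then $\partial_v^{\,j}g_i\in H(\theta,\mathcal I,j)$ would be a unit, forcing $\nu\le j<\nu$; hence all the coefficients vanish over $\Sigma$ and $\pmb{r}_{i,j}\neq0$ whenever $b_{i,j}$ is a unit. Carrying this out at finitely many points covering the compact part of $\Sigma$ under consideration produces the required finite collection of sequences of local $\theta$-admissible blowings-up.

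The step I expect to be the main obstacle is the lifting argument: verifying rigorously, from Definition~\ref{def:tadm} and the formulas for $\Gamma_{\theta,k}$, that the cylinder centers $\pi^{-1}(\overline{\mathcal C})$ are genuinely $\theta$-admissible — so that the class of singularities is preserved — \emph{and} that they never raise the tg-order of Definition~\ref{def:inv}, while at the same time confirming that after each such blowing-up one is again in a Weierstrass--Tschirnhaus form adapted to a monomial coordinate system, so that the induction over $\Sigma$ and the order-dropping step of Section~\ref{sec:Drop} can proceed. The passage from ``$\overline{f}$ monomialized'' to ``each $a_{i,j}$ individually a monomial times a unit with $\pmb{r}_{i,j}\neq0$'' is the other point that needs care.
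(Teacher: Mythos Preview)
Your proposal is correct and follows essentially the same approach as the paper: project along the $\partial_v$-direction to a hypersurface $N$, apply the inductive hypothesis to the principal ideal generated by the product of the nonzero $a_{i,j}$, and lift the resulting $\overline\theta$-admissible centers back to $M$ as $\partial_v$-invariant cylinders. You supply more detail than the paper on the verification of $\theta$-admissibility of the lifted centers, the non-increase of $\nu$, and the condition $\pmb{r}_{i,j}\neq 0$, but these are precisely the points the paper treats briefly or leaves implicit.
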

\begin{remark}
Note that, if $dim M =1$, the inductive hypothesis \textit{``Theorem $\ref{thm:main2}$ is true any foliated ideal-sheaf $(N,\omega,\mathcal{J},F)$ with $dim N < dim M$''} is trivially true.
\end{remark}
In case the foliated ideal sheaf $(M,\theta,\mathcal{I},E)$ satisfies the thesis of Proposition \ref{prop:prepnormal} at a point $p$, we will say that $p$ is a \textit{prepared} point and that the Weierstrass-Thirnhaus form in Lemma \ref{lem:BasicNormalForm} is prepared at $p$.

\medskip\noindent
{\bf Step 3.} Further admissible blowings-up to decrease the maximal value of the invariant $\nu$.

\medskip
The following Proposition \ref{prop:Dropping} will be proved in Section \ref{sec:Drop}.
\begin{proposition}[Third Step - Dropping the tg-order]
\label{prop:Dropping}
Suppose that $(M,\theta,\mathcal{I},E)$ satisfies the prepared normal form (see Proposition \ref{prop:prepnormal}) at a point $p$. Then, for a small enough neighborhood $M_0$ of $p$, there exists a sequence of $\theta$-admissible blowings-up $\pmb{\tau}: (M_r,\theta_r,\mathcal{I}_r,E_r) \rightarrow (M_0,\theta_0,\mathcal{I}_0,E_0)$ such that, for all point $q$ in $M_r$, the invariant $\nu(q)$ is strictly smaller than $\nu(p)$.
\end{proposition}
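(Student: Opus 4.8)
The plan is to work in the prepared Weierstrass-Thirnhaus coordinate system $(\pmb{u},v,\pmb{w})$ at $p$, where $g_1 = v^\nu U + \sum_{j=0}^{\nu-2} v^j \pmb{u}^{\pmb{r}_{1,j}} b_{1,j}$ with each $b_{1,j}$ a unit or zero, and $\partial_v \in \theta\cdot\mathcal{O}_p$. The governing idea (as in \cite{Cut2,AB}) is to emulate a surface of maximal contact: since $\partial_v \in \theta$, the generalized Fitting ideals and the tangency sequence $H(\theta,\mathcal{I},n)$ can be read off from $v$-derivatives, so controlling $\nu$ reduces to monomializing the $\pmb{u}$-part of the coefficients. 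Concretely, I would first attach to each prepared point $p$ a monomial invariant built from the exponents $\pmb{r}_{1,j}$ — roughly the ``distance to resolution'' of the ideal $(v^\nu,\, \{\pmb{u}^{\pmb{r}_{1,j}}\}_{j<\nu-1})$ in the $(\pmb{u},v)$-variables, measured after clearing the common exceptional factor. The claim is that a single $\theta$-admissible blowing-up of a coordinate subspace $\{u_{i}=v=0\}$ (or $\{u_i = u_j = 0\}$) — which is $\theta$-invariant or $\theta$-totally transverse in the monomial coordinates, hence $\theta$-admissible by Example \ref{ex:Kinds}(1)--(2) — either strictly drops this monomial invariant while keeping the point prepared and keeping $\nu$ non-increasing, or exhibits a point where $\nu$ has already dropped. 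This is exactly the mechanism illustrated in Example \ref{ex:main}, where one principalizes $(v^2,\pmb{u}^{\pmb{r}})$.

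The key steps, in order, are: (1) Show that $\theta$-admissibility is preserved: the centers I use are intersections of coordinate hyperplanes among the $u_i$ and $v$, so $\mathcal{I}_{\mathcal{C}}$ is generated by a subset of the coordinates and Case I/II of Lemma \ref{lem:AdmissibleCenter} apply verbatim; by Proposition \ref{prop:AdmCenter} the transform $\theta_r$ stays $\mathcal{K}$-monomial (resp.\ in the prescribed class). (2) Show $\nu$ does not increase: because $\partial_v\in\theta$ and the chosen centers either contain the $v$-axis direction appropriately or are transverse to it in a controlled way, the leading $v$-adic structure of $g_1$ is preserved after dividing by the exceptional monomial — this is the ``$v$ is left invariant'' phenomenon from Example \ref{ex:main}. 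One must check this in every chart of every blowing-up in the sequence. (3) Show the monomial invariant strictly decreases and that preparedness is restored after finitely many blowings-up (possibly re-applying Proposition \ref{prop:prepnormal}, which is legitimate since its blowings-up also do not increase $\nu$); since the invariant is a nonnegative integer (or lies in a well-ordered set), the process terminates, and termination forces $\nu(q) < \nu(p)$ everywhere. (4) Globalize from the germ at $p$ to a neighborhood $M_0$ using the openness of the relevant conditions (Lemma \ref{lem:Gmonloc}) and the upper semicontinuity of $(\nu,\mathrm{type})$, shrinking $M_0$ as needed.

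The main obstacle I expect is Step (2) combined with Step (3): ensuring that the center chosen to drop the monomial invariant never causes $\nu$ to increase, and simultaneously never permanently destroys the prepared normal form. Unlike the leaf-dimension-one case \cite{Bel1}, the transform of $\theta$ does not obey a clean dichotomy, so after a blowing-up the coefficients $a_{i,j}$ for $i\ge 2$ (and the ``error'' unit $U$) can acquire $v$-dependence or new monomial factors that must be re-cleared; the delicate point is to order the blowings-up — first those that make the $\pmb{u}^{\pmb{r}_{1,j}}$ totally ordered by divisibility, then those that lower $\nu$ — so that the two invariants interact monotonically. This is precisely why the algorithm must choose a distinguished direction (the $v$-axis, equivalently the choice of $\partial_v$) and hence is only local, and it is where the bulk of the case analysis in Section \ref{sec:Drop} will be spent.
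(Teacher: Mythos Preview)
Your broad strategy --- use the distinguished direction $\partial_v\in\theta$ and blow up coordinate strata in the $(\pmb u,v)$-variables to control the monomial coefficients of the prepared normal form --- is the same as the paper's. But you have inserted an iterative layer (a step-by-step ``monomial invariant'' plus possible re-application of Proposition~\ref{prop:prepnormal}) that the paper shows is unnecessary, and your closing worry about re-preparation reveals exactly the observation you are missing.

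The paper's argument is non-iterative. One forms the single monomial ideal
\[
\mathcal J \;=\; \bigl(v^{\nu},\ v^{j}\,\pmb u^{\pmb r_{i,j}}\,b_{i,j}\bigr)
\]
(using \emph{all} generators $g_i$, not only $g_1$) and takes \emph{one} sequence of combinatorial blowings-up with respect to the divisor $F=E\cup\{v=0\}$ that principalizes $\mathcal J$. Because every center is a stratum of $F$, $\theta$-admissibility is immediate (Remark~\ref{rk:combinatorial}: decompose $\theta=\{\omega,\partial_v\}$; every stratum is $\omega$-invariant and is either $\partial_v$-invariant or $\partial_v$-transverse). Crucially, combinatorial blowings-up in $(\pmb u,v)$ act monomially on $(\pmb u,v)$ and leave $\pmb w$ fixed, so the prepared structure is \emph{automatically} preserved throughout: the coefficients $a_{i,j}(\pmb u,\pmb w)$ never acquire dependence on the new $v$-like variable, and no re-preparation via Proposition~\ref{prop:prepnormal} is ever needed. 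This is precisely the point that dissolves your ``main obstacle'' in Step~(2)--(3).

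The endpoint analysis is then purely combinatorial. At any point $q$ in the preimage of $p$ one has affine coordinates $(\bar{\pmb x},\pmb w)$ with $(\pmb u,v)=\bar{\pmb x}^{\pmb A}$; the paper classifies $q$ by whether the submatrix $\pmb A_1$ (the exponents of the surviving exceptional variables in $\pmb u$) has full rank (Lemmas~\ref{lem:claim1}--\ref{lem:claim2}). In the full-rank case there is a translated coordinate $z$ with $\partial_z\in\theta_r$, and principality of $\mathcal J^\ast$ forces either the $v^\nu$-term to dominate (then a $z^{\nu-1}$-coefficient becomes a unit, so $\nu$ drops by at least one) or some $v^{j_1}\pmb u^{\pmb r_{i_1,j_1}}$-term to dominate (then $\nu$ drops to $j_1<\nu$). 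In the non-full-rank case the exponent vectors $S_\nu,\,S_{i,j}$ are pairwise distinct, so the dominating monomial already generates $\mathcal I_r$ locally and $\nu$ drops to $0$. No intermediate control of $\nu$ along the sequence is required; only the final value matters.

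In short: your plan would likely work, but replacing the iterative invariant and the re-preparation loop by a single combinatorial principalization of $\mathcal J$, followed by the rank dichotomy of Lemmas~\ref{lem:claim1}--\ref{lem:claim2}, both simplifies the argument and removes the obstacle you anticipated.
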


We can now prove the main result of this work:

\begin{proof}[Proof of Theorem \ref{thm:main2}]
The proof of the theorem follows from Propositions \ref{prop:Hia}, \ref{prop:prepnormal} and \ref{prop:Dropping} by induction on the dimension of $M$ and the maximal value of the invariant $\nu$. In particular, note that when $(\nu,type) = (0,0)$, we conclude that the pull-back of $\mathcal{I}$ is principal.
\end{proof}

\section{Main Theorem \ref{thm:main2} Dropping the type}
\label{sec:type}

We follow the notation of section \ref{sec:over} and we prove Proposition \ref{prop:Hia} in this section. The result was first proved in \cite{BeloT} and a more direct proof, under the additional hypothesis that the leaf dimension of $\theta$ is one, is given in \cite[Proposition 5.2]{Bel1}. For completeness, we will follow \cite{Bel1} and we will make the necessary adaptations. 

The main preliminary result which is necessary to prove the proposition is the following:

\begin{lemma}(c.f \cite[Proposition 4.1]{Bel1}) 
\label{lem:HironakaS}
Let $(M,\theta, \mathcal{I},E)$ be a foliated ideal sheaf and $M_0$ a relatively compact open set of $M$. Suppose that $\mathcal{I}_0:=\mathcal{I} \cdot \mathcal{O}_{M_0}$ is invariant by $\theta_0:=\theta \cdot \mathcal{O}_{M_0}$, i.e., $\theta_0 [ \mathcal{I}_0]\allowbreak \subset \mathcal{I}_0$. Then, there exists a sequence of $\theta$-admissible blowings-up of order $\geq1$:
\[
\begin{tikzpicture}
  \matrix (m) [matrix of math nodes,row sep=3em,column sep=3em,minimum width=1em]
  {(\widetilde{M},\widetilde{\theta},\widetilde{\mathcal{I}},\widetilde{E}) = (M_r,\theta_r,\mathcal{I}_r,E_r) & \cdots & (M_0,\theta_0,\mathcal{I}_0,E_0)\\};
  \path[-stealth]
    (m-1-1) edge node [above] {$\sigma_r$} (m-1-2)
    (m-1-2) edge node [above] {$\sigma_1$} (m-1-3);
\end{tikzpicture}
\]
such that $\widetilde{\mathcal{I}} =\mathcal{O}_{M}$.
\end{lemma}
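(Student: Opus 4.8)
The plan is to deduce the lemma from a \emph{functorial} principalization (embedded resolution) algorithm for ideal sheaves, of the kind recalled in \cite{Ko}, applied to $\mathcal{I}_0$ on $M_0$ relative to the SNC divisor $E$. Such an algorithm produces a canonical finite sequence of admissible blowings-up of order $\geq 1$, with smooth centers $\mathcal{C}_j$ having normal crossings with the (accumulated) exceptional and boundary divisor, after which the controlled transform of $\mathcal{I}_0$ — which is exactly the birational transform $\mathcal{I}^c$ in the sense of the excerpt, since all centers lie in the cosupport — becomes the full structure sheaf. The point of the proof is not the existence of this sequence, but the observation that under the hypothesis $\theta_0[\mathcal{I}_0]\subset\mathcal{I}_0$ every center produced by the algorithm is automatically $\theta$-admissible; granting this, the displayed sequence is the desired one and $\widetilde{\mathcal{I}}=\mathcal{O}_M$.

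The core step is therefore: \emph{if $\mathcal{I}$ is $\theta$-invariant, then each center of a functorial resolution of $\mathcal{I}$ is $\theta$-invariant}, so that the corresponding blowing-up is $\theta$-admissible by Example \ref{ex:Kinds}(1). First I would argue this locally. At a point $p$ and for $\partial\in\theta\cdot\mathcal{O}_p$, the analytic flow $\varphi_t$ of $\partial$ exists on a small neighborhood; since $\partial[\mathcal{I}_p]\subset\mathcal{I}_p$ the flow preserves $\mathcal{I}$, and since $\theta\subset Der_M(-\log E)$ it preserves $E$. Functoriality of the algorithm with respect to local analytic isomorphisms then forces $\varphi_t$ to map the first center $\mathcal{C}_1$ to itself; differentiating at $t=0$ shows $\partial$ is tangent to $\mathcal{C}_1$ along $\mathcal{C}_1$, i.e.\ $\mathcal{C}_1$ is $\theta$-invariant. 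After the first blowing-up, the transform $\widetilde{\theta}$ (strict transform intersected with $Der(-\log\widetilde{E})$) still satisfies $\widetilde{\theta}[\widetilde{\mathcal{I}}]\subset\widetilde{\mathcal{I}}$ for the birational transform $\widetilde{\mathcal{I}}$ — this is a direct local computation in the blow-up charts, using that $\partial$ lifts to a vector field tangent to the exceptional divisor and that the birational transform is generated by $x^{-1}f^{\ast}$ — and the next center of the algorithm, being the first center of the functorial resolution of $\widetilde{\mathcal{I}}$, is $\widetilde{\theta}$-invariant by the same flow argument. Iterating along the whole sequence shows every blowing-up is $\theta$-admissible (with $k_0=0$).

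Finiteness and termination are then routine: functorial resolution is locally finite, and relative compactness of $M_0$ in $M$ makes the sequence globally finite; termination of the algorithm yields $\widetilde{\mathcal{I}}=\mathcal{O}_M$. The main obstacle I expect is the rigorous bookkeeping around the functoriality statement: one must use a version of the resolution algorithm that (i) is equivariant under local analytic isomorphisms preserving the prescribed SNC divisor $E$, and (ii) whose ``controlled/birational transform'' convention matches the order-$\geq 1$ birational transform used here, so that the algorithm's output really is $\mathcal{O}_M$ rather than merely a principal monomial ideal. A secondary technical point is justifying the infinitesimal-to-local passage (existence of the analytic flow, and the fact that tangency of all $\partial\in\theta$ along $\mathcal{C}_j$ is the \emph{definition} of a $\theta$-invariant center used in Example \ref{ex:Kinds}(1)); both are handled exactly as in \cite[Proposition 4.1]{Bel1} and \cite{BeloT}, and I would simply adapt that argument verbatim since it does not use the leaf dimension.
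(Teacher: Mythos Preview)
Your proposal is correct and follows essentially the same approach as the paper: both use functoriality of a canonical resolution algorithm to show that every center is $\theta$-invariant (hence $\theta$-admissible with $k_0=0$), and both propagate the invariance $\theta[\mathcal{I}]\subset\mathcal{I}$ through each blow-up. The only packaging difference is that the paper formalizes this via two auxiliary lemmas (equivalence of geometric and algebraic invariance for reduced ideals, and preservation of invariance under $\theta$-invariant blow-ups using the log-canonical hypothesis) and then argues in local coordinates where the regular generators of $\theta$ are coordinate vector fields, whereas you invoke the analytic flow of each $\partial\in\theta$ directly as the local isomorphism to which functoriality is applied---these are the same argument, the paper's coordinate version being the concrete instance of your flow (translation in the rectified coordinates).
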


This result is a consequence of the functoriality of resolution of singularities. To finish the proof of Proposition \ref{prop:Hia}, we will just need to use the above lemma over $H(\theta,\mathcal{I},\nu)$ and make the necessary computations in order to control the pair of invariant $(\nu,type)$ (mutatis mutandis, the same argument of \cite[section 5.3]{Bel1}).

\subsection{Proof of Lemma \ref{lem:HironakaS}} 
In order to prove the lemma, we introduce the notion of geometrical invariance: an ideal sheaf $\mathcal{I}$ is \textit{geometrically invariant} by $\theta$ if every leaf of the foliation generated by $\theta$ that intersects $V(\mathcal{I})$ is totally contained in $V(\mathcal{I})$.
\begin{lemma}(c.f \cite[Lemma 4.2]{Bel1})  
Let $\theta$ be an involutive singular distribution and $\mathcal{I}$ a reduced ideal sheaf. Then $\mathcal{I}$ is $\theta$-invariant if, and only if, $\mathcal{I}$ is geometrically invariant by $\theta$.
\label{lem:Fol}
\end{lemma}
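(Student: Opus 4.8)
\textbf{Proof plan for Lemma \ref{lem:Fol}.}

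The plan is to prove both implications separately, working at the level of stalks at an arbitrary point $p$, and to exploit the fact that $\mathcal{I}$ is reduced so that $V(\mathcal{I})$ is a reduced analytic set whose ideal sheaf is exactly $\mathcal{I}$. First I would set up the local picture: fix $p \in V(\mathcal{I})$, and use the structure theory of reduced analytic germs together with the Stefan--Sussmann theorem (already invoked in the introduction) so that through $p$ there passes a well-defined leaf $L_p$ of the singular foliation generated by $\theta$, of dimension equal to the rank of $\theta \cdot \mathcal{O}_p$ as a vector subspace of $T_pM$ when $p \notin S(\theta)$, and handled by a limiting/closure argument on $S(\theta)$.

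For the direction ``$\theta$-invariant $\Rightarrow$ geometrically invariant'': suppose $\theta[\mathcal{I}] \subset \mathcal{I}$. The key step is that for any $\partial \in \theta \cdot \mathcal{O}_p$ and any $f \in \mathcal{I} \cdot \mathcal{O}_p$, the local flow $\varphi_t$ of $\partial$ satisfies $\frac{d}{dt}(f \circ \varphi_t) = (\partial f)\circ \varphi_t$, and since $\partial f \in \mathcal{I}$ as well, an inductive argument on Taylor coefficients in $t$ shows $f \circ \varphi_t \in \mathcal{I}$ for all small $t$; hence the flow preserves $V(\mathcal{I})$. Since the leaf through a point of $V(\mathcal{I})$ is generated (near $p$) by iterated flows of vector fields in $\theta$, the whole leaf-germ lies in $V(\mathcal{I})$; spreading this over $V(\mathcal{I})$ by connectedness of leaves gives full containment. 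One must be a little careful at singular points of $\theta$ and of $V(\mathcal{I})$, but reducedness of $\mathcal{I}$ means $f \circ \varphi_t$ vanishing on $V(\mathcal{I})$ forces $f \circ \varphi_t \in \mathcal{I}$, so no nilpotent obstruction arises.

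For the converse ``geometrically invariant $\Rightarrow$ $\theta$-invariant'': suppose every leaf meeting $V(\mathcal{I})$ lies in $V(\mathcal{I})$. Take $\partial \in \theta \cdot \mathcal{O}_p$ and $f \in \mathcal{I} \cdot \mathcal{O}_p$; I want $\partial f \in \mathcal{I} \cdot \mathcal{O}_p$, i.e., $\partial f$ vanishes on $V(\mathcal{I})$ (using reducedness). Pick any $q \in V(\mathcal{I})$ near $p$; the flow line $t \mapsto \varphi_t(q)$ of $\partial$ stays inside the leaf $L_q$, which by hypothesis is inside $V(\mathcal{I})$, so $f(\varphi_t(q)) \equiv 0$, and differentiating at $t=0$ gives $(\partial f)(q) = 0$. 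As $q$ ranges over $V(\mathcal{I})$ and $\mathcal{I}$ is reduced, $\partial f \in \mathcal{I}$. The main obstacle in both directions is making the leaf/flow argument rigorous across the singular set $S(\theta)$ where leaf dimension can drop and $\theta$ is not locally free: one either restricts to $M \setminus S(\theta)$, where everything is a genuine regular foliation and the flow argument is clean, and then uses that $S(\theta)$ is a proper analytic subset together with the coherence of $\mathcal{I}$ and continuity to conclude on all of $M$; or one cites the corresponding statement in \cite[Lemma 4.2]{Bel1} and only indicates the (identical) adaptation. I would take the first route, isolating the regular locus, proving the equivalence there by the flow computation above, and then invoking that a section of a coherent sheaf vanishing on a dense open subset of the reduced set $V(\mathcal{I})$ vanishes on all of it.
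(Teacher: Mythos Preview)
Your proposal is correct and essentially follows the same strategy as the paper: for the direction ``geometrically invariant $\Rightarrow$ $\theta$-invariant'' your flow argument (differentiate $f(\varphi_t(q))\equiv 0$ at $t=0$ and use reducedness) is exactly the paper's argument, just phrased in terms of flows rather than restriction to the leaf; for the direction ``$\theta$-invariant $\Rightarrow$ geometrically invariant'' the paper chooses local coordinates $(\pmb{x},\pmb{y})$ straightening the regular part of $\theta$ at $p$ and observes that $\theta$-invariance forces generators of $\mathcal{I}$ to be independent of $\pmb{x}$, which is the static version of your Taylor/flow computation.

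One remark: your proposed detour through the regular locus $M\setminus S(\theta)$ followed by a density/coherence extension is unnecessary. The argument works pointwise at every $p$, singular or not. For the second implication, any $\partial\in\theta\cdot\mathcal{O}_p$ has a genuine local flow whose orbit through $q\in V(\mathcal{I})$ lies in the Stefan--Sussmann leaf $L_q\subset V(\mathcal{I})$, regardless of whether $q\in S(\theta)$; for the first implication, the iterated derivatives $\partial^k f\in\mathcal{I}$ give $f(\varphi_t(p))\equiv 0$ by analyticity in $t$ at every $p\in V(\mathcal{I})$. The paper simply splits the local generators of $\theta$ into regular ones (handled as above) and singular ones (which vanish at $p$, so $(\partial_i f)(p)=0$ trivially), avoiding any limiting argument.
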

\begin{proof}
To prove the if implication, let $p$ be a point in $V(\mathcal{I})$, $\mathcal{L}$ be the leaf of $\theta$ passing through $p$ and $\{\partial_1, \ldots, \partial_d\}$ be vector fields that generate $\theta\cdot \mathcal{O}_p$. Without loss of generality, we can assume that there exists a number $0 \leq t \leq d$ and a coordinate system $(\pmb{x},\pmb{y}) = (x_1, \ldots, x_t, y_{t+1}, \ldots, y_m)$ such that $\partial_i = \partial_{x_i}$ for $i\leq t$ and $\partial_i$ is singular for $i> t$. In this case, it is clear that there exists a set of generators $(f_1(\pmb{y}), \ldots, f_r(\pmb{y}))$ which are independent of the $\pmb{x}$ coordinates and the result is now obvious.

To prove the only if implication, let us assume that $\mathcal{I}$ is a reduced ideal sheaf which is geometrically invariant by $\theta$. We claim that $V(\mathcal{I}) \subset V(\theta(\mathcal{I}))$, which is enough to prove the lemma since it would imply that:
\[
\theta(\mathcal{I}) \subset \sqrt{\theta(\mathcal{I})} \subset \sqrt{\mathcal{I}} = \mathcal{I}
\]
In order to prove the claim, let $p$ be a point of $ V(\mathcal{I})$, $\mathcal{L}$ be the leaf of $\theta$ passing through $p$, $f$ be an arbitrary function in $\mathcal{I}$ and $\{\partial_1, \ldots, \partial_d \}$ be vector fields that generates $\theta\cdot \mathcal{O}_p$. Since, by hypothesis, $\mathcal{L} \subset V(\mathcal{I})$, we conclude that $f|_{\mathcal{L}} \equiv 0$. Moreover, since $\theta$ is tangent to $\mathcal{L}$, we conclude that $\partial_i(f)|_{\mathcal{L}} = \partial_i|_{\mathcal{L}}(f|_{\mathcal{L}}) =0$ for any $i$ and, in particular, $p \in V(\theta(f))$. Since the choice of $f \in \mathcal{I}$ was arbitrary, we conclude that $p$ belongs to $V(\theta(\mathcal{I}))$ as we wanted to prove.
\end{proof}
We now state a preliminary lemma:
\begin{lemma}(see \cite[Lemma 4.3]{Bel1}) 
Let $(M,\theta,\mathcal{I},E)$ be a foliated ideal sheaf and let us consider a $\theta$-invariant blowing-up of order $\geq1$ $\sigma: (\widetilde{M},\widetilde{\theta},\widetilde{\mathcal{I}},\widetilde{E}) \longrightarrow (M,\theta,\mathcal{I},E)$. Then $\widetilde{\mathcal{I}}$ is invariant by $\widetilde{\theta}$.
\label{lem:Hir1}
\end{lemma}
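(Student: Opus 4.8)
The plan is to verify, locally around each point of the center, that every germ of vector field in $\widetilde\theta$ carries $\widetilde{\mathcal I}=\mathcal I^c$ into itself. Throughout I use that $\mathcal I$ is itself $\theta$-invariant, which is the standing hypothesis in the situation where this lemma is applied (see Lemma~\ref{lem:HironakaS}, and cf.\ \cite[Lemma~4.3]{Bel1}). The statement being local, I would fix a point $p$ of $M$; if $p\notin\mathcal C$ the morphism $\sigma$ is a biholomorphism near $p$ and the claim is trivial, so assume $p\in\mathcal C$. In a chart of the blowing-up with local equation $x=0$ for the exceptional divisor $F$, one has $\mathcal I_F=(x)$ with $x$ a non-zero divisor, and since $\sigma$ has order $\ge 1$ the total transform satisfies $\mathcal I^{\ast}=\sigma^{\ast}\mathcal I\subseteq(x)$ and $\widetilde{\mathcal I}=\mathcal I^c=x^{-1}\mathcal I^{\ast}$, that is $\mathcal I^{\ast}=\mathcal I_F\cdot\widetilde{\mathcal I}$.

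The first step is to describe $\widetilde\theta$. Since $\mathcal C$ is $\theta$-invariant, Lemma~\ref{lem:Fol} shows that $\mathcal I_{\mathcal C}$ is $\theta$-invariant, hence every $\partial$ in $\theta\cdot\mathcal O_p$ is tangent to $\mathcal C$; being moreover tangent to $E$ (as $\theta\subseteq Der_M(-logE)$) and the blowing-up being admissible, the $\sigma$-related lift $\widehat\partial$ of $\partial$ is a \emph{holomorphic} vector field on $\widetilde M$ tangent to $\widetilde E$. I would then argue, choosing a system of generators of $\theta$ adapted to $\mathcal C$ exactly as in the ``invariant'' case in the proofs of Lemma~\ref{lem:AdmissibleCenter} and Proposition~\ref{prop:AdmCenter}, that these lifts $\widehat\partial$ require no division by $x$ and already generate $\widetilde\theta$ inside $Der_{\widetilde{M}}(-log\widetilde{E})$; consequently every $\widetilde\partial\in\widetilde\theta$ is, locally, an $\mathcal O_{\widetilde M}$-linear combination of such lifts.

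The second step is the identity $\widehat\partial(\sigma^{\ast}f)=\sigma^{\ast}(\partial f)$ for $f\in\mathcal I$, which is the defining property of $\sigma$-relatedness; since $\partial f\in\mathcal I$ by invariance of $\mathcal I$, this gives $\widehat\partial(\mathcal I^{\ast})\subseteq\mathcal I^{\ast}$, hence $\widetilde\partial(\mathcal I^{\ast})\subseteq\mathcal I^{\ast}$ for every $\widetilde\partial\in\widetilde\theta$ by the first step. Also $\widetilde\partial(\mathcal I_F)\subseteq\mathcal I_F$ because $\widetilde\theta\subseteq Der_{\widetilde{M}}(-log\widetilde{E})$ and $F$ is one of the components of $\widetilde E$. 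The third step is then purely formal: if a derivation $\delta$ preserves both $\mathcal I_F=(x)$ and $\mathcal I^{\ast}=x\,\widetilde{\mathcal I}$, then writing $\delta(x)=xg$ one gets, for $h\in\widetilde{\mathcal I}$, that $\delta(xh)=x\,(gh+\delta(h))$ lies in $\mathcal I^{\ast}=x\,\widetilde{\mathcal I}$, so cancelling the non-zero divisor $x$ and using $gh\in\widetilde{\mathcal I}$ yields $\delta(h)\in\widetilde{\mathcal I}$. Taking $\delta=\widetilde\partial$ gives $\widetilde\theta[\widetilde{\mathcal I}]\subseteq\widetilde{\mathcal I}$, which would complete the proof.

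The hard part is the claim in the first step that, for a $\theta$-invariant center, the strict transform $\widetilde\theta$ is generated by the plain lifts $\widehat\partial$ of generators of $\theta$ — i.e.\ that no saturation (division by powers of the exceptional equation) is needed. This is the only point that uses something about $\theta$ beyond involutivity, and I expect it to be settled exactly as in the invariant case of Proposition~\ref{prop:AdmCenter}, where a monomial (more generally, an adapted) basis of $\theta$ transforms into an adapted basis of $\widetilde\theta$; once it is granted, Steps~2 and 3 are formal and require only that $\mathcal C$ and $\mathcal I$ be $\theta$-invariant and that $\sigma$ have order $\ge 1$.
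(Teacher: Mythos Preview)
Your argument is correct and follows essentially the same route as the paper: both reduce to showing that $\widetilde\theta$ is generated by the plain lifts $\sigma^{\ast}\partial$ of generators of $\theta$ (no saturation by the exceptional equation), and then apply a Leibniz-type computation using $\widetilde\theta[\mathcal I_F]\subset\mathcal I_F$ and $\widetilde\theta[\mathcal I^{\ast}]\subset\mathcal I^{\ast}$ to conclude $\widetilde\theta[\mathcal I_F^{-1}\mathcal I^{\ast}]\subset\mathcal I_F^{-1}\mathcal I^{\ast}$. Your local coordinate version of Step~3 is exactly the sheaf-theoretic identity the paper writes as $\theta^{\ast}[\mathcal I^{\ast}\cdot\mathcal I_F^{-1}]\subset\theta^{\ast}[\mathcal I^{\ast}]\mathcal I_F^{-1}+\mathcal I^{\ast}\theta^{\ast}[\mathcal I_F^{-1}]\subset\widetilde{\mathcal I}$.

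The one point to adjust is your justification of Step~1. You invoke Proposition~\ref{prop:AdmCenter}, but that result is specific to $\mathcal K$-monomial distributions, whereas the lemma is stated (and used) for general $\theta$. The paper handles this by invoking the standing hypothesis of Section~\ref{sec:over} that $\theta$ has log-canonical singularities: for a $\theta$-invariant center this guarantees $\widetilde\theta=\sigma^{\ast}\theta$ directly (indeed, the paper remarks that this is the \emph{only} place the log-canonical hypothesis is used). So your identification of Step~1 as ``the hard part'' is exactly right, but the resolution is the log-canonical assumption rather than the monomial computation of Proposition~\ref{prop:AdmCenter}.
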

\begin{proof}
In order to be self-contianed, we reproduce the proof of \cite[Lemma 4.3]{Bel1}. Since the blowing-up is $\theta$-invariant, the ideal sheaf $\mathcal{I}_{\mathcal{C}}$ is $\theta$-invariant. Thus:
\[
\begin{array}{ccc}
\theta [ \mathcal{I}_{\mathcal{C}}] \subset \mathcal{I}_{\mathcal{C}} & \Rightarrow & \theta^{\ast}[\mathcal{I}_{F}] \subset \mathcal{I}_{F}
\end{array}
\]
Now, since $\theta$ has only log-canonical singularities, $\widetilde{\theta}=\sigma^{\ast}\theta$ and we conclude that:
\[
\begin{aligned}
\widetilde{\theta}[\widetilde{\mathcal{I}}] + \widetilde{\mathcal{I}} &= \theta^{\ast}[\mathcal{I}^{\ast} \cdot \mathcal{I}_F^{-1}] + \widetilde{\mathcal{I}} \\
&\subset \theta^{\ast}[ \mathcal{I}^{\ast}] \mathcal{I}_F^{-1} + \mathcal{I}^{\ast} \theta^{\ast}[\mathcal{I}_F^{-1}] + \widetilde{\mathcal{I}} = \widetilde{\mathcal{I}}%\\
%&= \widetilde{\mathcal{I}}
\end{aligned}
\]
which concludes the lemma.
\end{proof}
\noindent
Now, we are ready to start the proof of Lemma \ref{lem:HironakaS} (c.f proof of \cite[Lemma 4.1]{Bel1}).
\begin{proof}[Proof of Lemma \ref{lem:HironakaS}]
By the usual Hironaka's resolution of singularities Theorem, there exists a resolution of singularities of $(M_0,\theta_0,\mathcal{I}_0,E_0)$:
\[
\begin{tikzpicture}
  \matrix (m) [matrix of math nodes,row sep=3em,column sep=3em,minimum width=1em]
  { (\widetilde{M},\widetilde{\theta},\widetilde{\mathcal{I}},\widetilde{E})=(M_r,\theta_r,\mathcal{I}_r,E_r) & \cdots & (M_0,\theta_0,\mathcal{I}_0,E_0)\\};
  \path[-stealth]
    (m-1-1) edge node [above] {$\sigma_r$} (m-1-2)
    (m-1-2) edge node [above] {$\sigma_1$} (m-1-3);
\end{tikzpicture}
\]
where $\sigma_i : (M_i,\theta_i,\mathcal{I}_i,E_i) \longrightarrow (M_{i-1},\theta_{i-1},\mathcal{I}_{i-1},E_{i-1})$ has center $\mathcal{C}_i$. We claim that every center $\mathcal{C}_i$ is $\theta_{i-1}$ invariant, which finishes the proof. We prove the claim by induction: Suppose that the centers $\mathcal{C}_i$ are $\theta_{i-1}$-invariant for $i<k$. We need to verify that $\mathcal{C}_k$ is also $\theta_{k-1}$-invariant (including for $k=1$). Since $\mathcal{C}_k$ is regular, by Lemma $\ref{lem:Fol}$, we only need to verify that $\mathcal{C}_k$ is geometrically invariant by $\theta_{k-1}$.

To this end, let $\mathcal{L}$ be a connected leaf of $\theta_{k-1}$ with non-empty intersection with $\mathcal{C}_{k}$. Fix $p \in \mathcal{L}\cap \mathcal{C}_k$ and let $\{\partial_1. \ldots, \partial_d\}$ be vector fields that generate $\theta\cdot \mathcal{O}_p$. Without loss of generality, we can assume that there exists a number $0 \leq t \leq d$ and a coordinate system $(\pmb{x},\pmb{y}) = (x_1, \ldots, x_t, y_{t+1}, \ldots, y_m)$ centered at $p$ such that $\partial_i = \partial_{x_i}$ for $i\leq t$ and $\partial_i$ is singular for $i> t$. In particular, $\{\partial_1, \ldots, \partial_t\}$ locally generates $\mathcal{L}$. Now, note that $\mathcal{I}_{k-1}$ is $\theta_{k-1}$-invariant because of the induction hypotheses and a recursive use Lemma \ref{lem:Hir1}. So, it is easy to see that there exists a set $\{f_1(\pmb{y}),...,f_n(\pmb{y})\}$ of local generators of $\mathcal{I}_{k-1}. \mathcal{O}_p$ which is independent of the $\pmb{x}$ coordinates. By the functoriality of Hironaka`s resolution of singularity Theorem, the center $\mathcal{C}_k$ is locally geometrically invariant by $\theta$. Since the choice of $p$ in the intersection $\mathcal{C}_k \cap \mathcal{L}$ was arbitrarily, we conclude that $\mathcal{L} \subset \mathcal{C}_k$, which ends the proof.
\end{proof}
\subsection{Proof of Proposition \ref{prop:Hia}}
Let $M_0$ be a sufficiently small relatively compact neighborhood of $p$ where the maximal value of the tg-order is equal to $\nu:=\nu(p)$. By hypothesis, $H(\theta_0,\mathcal{I}_0,\nu)$ is a $\theta$-invariant ideal-sheaf. So, set 
$\mathcal{C}l:=H(\theta,\mathcal{I},\nu)$ and, by Lemma $\ref{lem:HironakaS}$, there exists a $\theta$-invariant sequence of blowings-up of order $\geq1$:
\[
\begin{tikzpicture}
  \matrix (m) [matrix of math nodes,row sep=3em,column sep=3em,minimum width=2em]
  {(\widetilde{M},\widetilde{\theta},\widetilde{\mathcal{C}l},\widetilde{E}) =(M_r,\theta_r,\mathcal{C}l_r,E_r) & \cdots  & (M_0,\theta_0,\mathcal{C}l_0,E_0)\\};
  \path[-stealth]
    (m-1-1) edge node [above] {$\sigma_r$} (m-1-2)
    (m-1-2) edge node [above] {$\sigma_1$} (m-1-3);
\end{tikzpicture}
\]
such that $\widetilde{\mathcal{C}l} = \mathcal{O}_{\widetilde{M}}$. Furthermore:
\begin{claim}
 The sequence of blowings-up $\vec{\sigma}$ is of order $\geq1$ in respect to $\mathcal{I}_0$ and $\mathcal{C}l_{j} = H(\theta_j,\mathcal{I}_j,\nu)$ for all $j  \leq r$.
 \label{cl:1a}
\end{claim}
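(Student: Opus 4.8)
The plan is to prove both assertions simultaneously by induction on $j$, exploiting the fact that each blowing-up $\sigma_j$ is $\theta$-invariant and that $H(\theta,\mathcal{I},\nu)$ is itself $\theta$-invariant (so that Lemma \ref{lem:Hir1} applies at every stage). First I would observe that, since $\mathcal{C}l_0 = H(\theta_0,\mathcal{I}_0,\nu)$ by definition and the center $\mathcal{C}_1$ is contained in $V(\mathcal{C}l_0) \subset V(\mathcal{I}_0)$ (the center of any blowing-up in the resolution of $\mathcal{C}l_0$ sits inside its support, and $\mathcal{C}l_0 \supset \mathcal{I}_0$ forces $V(\mathcal{C}l_0)\subset V(\mathcal{I}_0)$), the first blowing-up is of order $\geq 1$ with respect to $\mathcal{I}_0$. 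The inductive step would show: if $\mathcal{C}l_{j-1} = H(\theta_{j-1},\mathcal{I}_{j-1},\nu)$ and $\sigma_j$ is of order $\geq 1$ with respect to $\mathcal{I}_{j-1}$, then $\mathcal{C}l_j = H(\theta_j,\mathcal{I}_j,\nu)$ and $\sigma_{j+1}$ is of order $\geq 1$ with respect to $\mathcal{I}_j$.

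The heart of the inductive step is the commutation of the operator $H(\theta,-,\nu)$ with birational transform under a $\theta$-invariant blowing-up of order $\geq 1$. The key computation is the identity $H(\theta_j,\mathcal{I}_j,n)^{\ast} = \mathcal{I}_{F_j}^{n}\cdot$ (something), more precisely one checks by induction on $n$ that the birational transform of $H(\theta_{j-1},\mathcal{I}_{j-1},n)$ (as a marked ideal of appropriate multiplicity) equals $H(\theta_j,\mathcal{I}_j,n)$, using that $\widetilde{\theta}=\sigma^{\ast}\theta$ (valid because $\theta$ is log-canonical, as in Lemma \ref{lem:Hir1}) and the Leibniz-type relation $\theta^{\ast}[\mathcal{I}_F^{-1}\cdot \mathcal{I}^{\ast}] \subseteq \mathcal{I}_F^{-1}\cdot\theta^{\ast}[\mathcal{I}^{\ast}] + \mathcal{I}_F^{-1}\mathcal{I}^{\ast}$ together with $\theta^{\ast}[\mathcal{I}_F]\subseteq \mathcal{I}_F$. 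Applying this with $n=\nu$ and $n=\nu+1$, and using that $H(\theta_{j-1},\mathcal{I}_{j-1},\nu)=H(\theta_{j-1},\mathcal{I}_{j-1},\nu+1)$ (the defining property of $\nu$, which persists since $\nu$ is the maximal value on $M_0$ and the type/tg-order do not increase — or simply because $\mathcal{C}l_{j-1}$ is $\theta_{j-1}$-stable so $H(\theta_{j-1},\mathcal{C}l_{j-1},1)=\mathcal{C}l_{j-1}$), one concludes $\mathcal{C}l_j = \mathcal{C}l_{j-1}^c = H(\theta_{j-1},\mathcal{I}_{j-1},\nu)^c = H(\theta_j,\mathcal{I}_j,\nu)$. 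Finally, since the resolution of $\mathcal{C}l_0$ is such that every center $\mathcal{C}_{j+1}$ lies in $V(\mathcal{C}l_j) = V(H(\theta_j,\mathcal{I}_j,\nu)) \subseteq V(\mathcal{I}_j)$, the blowing-up $\sigma_{j+1}$ is of order $\geq 1$ with respect to $\mathcal{I}_j$, completing the induction.

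I expect the main obstacle to be the bookkeeping in the commutation identity between $H(\theta,-,n)$ and birational transform: one must be careful that the transform of $H(\theta,\mathcal{I},n)$ is the \emph{birational} (not total) transform, which requires knowing at each step that $\mathcal{C}l_{j-1}$ is still $\theta_{j-1}$-invariant (so that the center $\mathcal{C}_j\subseteq V(\mathcal{C}l_{j-1})$ makes $\sigma_j$ of order $\geq 1$ with respect to $\mathcal{C}l_{j-1}$, and Lemma \ref{lem:Hir1} keeps $\mathcal{C}l_j$ $\theta_j$-invariant) — this is precisely where the recursive structure of the argument and the $\theta$-invariance hypothesis are used in tandem. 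A secondary subtlety is ensuring that $\sigma_j$ being of order $\geq 1$ with respect to $\mathcal{C}l_{j-1}$ indeed implies it is of order $\geq 1$ with respect to $\mathcal{I}_{j-1}$; this follows from the elementary inclusion $\mathcal{I}_{j-1}\subseteq \mathcal{C}l_{j-1}$, hence $V(\mathcal{C}l_{j-1})\subseteq V(\mathcal{I}_{j-1})$, so any center inside $V(\mathcal{C}l_{j-1})$ lies in $V(\mathcal{I}_{j-1})$.
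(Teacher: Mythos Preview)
Your proposal is correct and follows essentially the same approach as the paper: induction on the index of the blowing-up, using at each step that the center lies in $V(\mathcal{C}l_{j-1})\subset V(\mathcal{I}_{j-1})$ to get order $\geq 1$, and then the commutation identity $H(\widetilde{\theta},\widetilde{\mathcal{I}},i)=H(\theta,\mathcal{I},i)^{\ast}\cdot\mathcal{I}_F^{-1}$ for $i\leq\nu$ to identify $\mathcal{C}l_j$ with $H(\theta_j,\mathcal{I}_j,\nu)$. The only organizational difference is that the paper isolates that commutation identity as a separate lemma (Lemma~\ref{lem:25}) proved once by induction on $i$, whereas you fold it into the inductive step; the ingredients you list ($\widetilde{\theta}=\sigma^{\ast}\theta$ from log-canonicity, $\theta^{\ast}[\mathcal{I}_F]\subset\mathcal{I}_F$, and the Leibniz-type manipulation) are exactly those used there.
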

\noindent
If we assume the claim, the result is now obvious. Indeed, if we take the same sequence of blowing-up in respect to $(M_0,\theta_0, \allowbreak \mathcal{I}_0,E_0)$, we obtain a $\theta$-invariant sequence of blowings-up of order $\geq1$:
\[
\begin{tikzpicture}
  \matrix (m) [matrix of math nodes,row sep=3em,column sep=3em,minimum width=2em]
   {(\widetilde{M},\widetilde{\theta},\widetilde{\mathcal{I}},\widetilde{E}) =(M_r,\theta_r,\mathcal{I}_r,E_r) & \cdots  & (M_0,\theta_0,\mathcal{I}_0,E_0)\\};
  \path[-stealth]
    (m-1-1) edge node [above] {$\sigma_r$} (m-1-2)
    (m-1-2) edge node [above] {$\sigma_1$} (m-1-3);
\end{tikzpicture}
\]
such that $ H(\widetilde{\theta},\widetilde{\mathcal{I}},\nu) = \widetilde{\mathcal{C}l} = \mathcal{O}_{\widetilde{M}}$, which is the desired result. Now, in order to prove the claim, the main step is the following lemma:
\begin{lemma}
Let $\sigma: (\widetilde{M},\widetilde{\theta},\widetilde{\mathcal{I}},\widetilde{E}) \longrightarrow (M,\theta,\mathcal{I},E)$ be a $\theta$-invariant blowing-up with center contained in $V(\mathcal{C}l)$, where $\mathcal{C}l:=H(\theta,\mathcal{I},\nu)$. Then
\[
H(\widetilde{\theta},\widetilde{\mathcal{I}}, i) = H(\theta,\mathcal{I}, i)^{\ast}  \mathcal{I}_F^{-1}
\]
for every $i  \leq \nu$, where $F$ is the exceptional divisor created by $\sigma$. In particular $\widetilde{\mathcal{C}l} = H(\widetilde{\theta},\widetilde{\mathcal{I}},\nu)$.
\label{lem:25}
\end{lemma}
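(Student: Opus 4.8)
\textbf{Proof proposal for Lemma \ref{lem:25}.}

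The plan is to prove the identity $H(\widetilde{\theta},\widetilde{\mathcal{I}},i) = H(\theta,\mathcal{I},i)^{\ast}\,\mathcal{I}_F^{-1}$ by induction on $i$, for $0 \le i \le \nu$. The base case $i=0$ is just the definition of the birational transform, since the blowing-up has order $\ge 1$ (its center lies in $V(\mathcal{C}l) \subseteq V(\mathcal{I})$, using that $\mathcal{I} = H(\theta,\mathcal{I},0) \subseteq H(\theta,\mathcal{I},\nu) = \mathcal{C}l$): we have $\widetilde{\mathcal{I}} = \mathcal{I}^{\ast}\mathcal{I}_F^{-1}$. For the inductive step, suppose $H(\widetilde{\theta},\widetilde{\mathcal{I}},i) = H(\theta,\mathcal{I},i)^{\ast}\mathcal{I}_F^{-1}$ for some $i < \nu$; write $\mathcal{H}_i := H(\theta,\mathcal{I},i)$ for brevity. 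Then
\[
H(\widetilde{\theta},\widetilde{\mathcal{I}},i+1) = H(\widetilde{\theta},\widetilde{\mathcal{I}},i) + \widetilde{\theta}[H(\widetilde{\theta},\widetilde{\mathcal{I}},i)] = \mathcal{H}_i^{\ast}\mathcal{I}_F^{-1} + \widetilde{\theta}[\mathcal{H}_i^{\ast}\mathcal{I}_F^{-1}].
\]
The goal is to show this equals $\mathcal{H}_{i+1}^{\ast}\mathcal{I}_F^{-1} = (\mathcal{H}_i + \theta[\mathcal{H}_i])^{\ast}\mathcal{I}_F^{-1}$.

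The heart of the argument is a Leibniz-rule computation analogous to the one in the proof of Lemma \ref{lem:Hir1}. Since $\theta$ has log-canonical singularities, the blowing-up is $\theta$-invariant (so $\mathcal{I}_F$ is $\widetilde{\theta}$-invariant, i.e.\ $\widetilde{\theta}[\mathcal{I}_F] \subseteq \mathcal{I}_F$, hence $\widetilde{\theta}[\mathcal{I}_F^{-1}] \subseteq \mathcal{I}_F^{-1}$) and $\widetilde{\theta} = \sigma^{\ast}\theta$ as sheaves of derivations. Applying the product rule,
\[
\widetilde{\theta}[\mathcal{H}_i^{\ast}\mathcal{I}_F^{-1}] \subseteq \widetilde{\theta}[\mathcal{H}_i^{\ast}]\,\mathcal{I}_F^{-1} + \mathcal{H}_i^{\ast}\,\widetilde{\theta}[\mathcal{I}_F^{-1}] \subseteq (\theta[\mathcal{H}_i])^{\ast}\mathcal{I}_F^{-1} + \mathcal{H}_i^{\ast}\mathcal{I}_F^{-1},
\]
where $\widetilde{\theta}[\mathcal{H}_i^{\ast}] = (\theta[\mathcal{H}_i])^{\ast}$ follows from $\widetilde{\theta} = \sigma^{\ast}\theta$ (a local generator $\partial$ of $\theta$ pulls back to a generator of $\widetilde{\theta}$, and $(\partial f)^{\ast} = \widetilde{\partial}(f^{\ast})$ by naturality of the differential). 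This gives the inclusion $H(\widetilde{\theta},\widetilde{\mathcal{I}},i+1) \subseteq \mathcal{H}_{i+1}^{\ast}\mathcal{I}_F^{-1}$. For the reverse inclusion, one shows $(\theta[\mathcal{H}_i])^{\ast}\mathcal{I}_F^{-1} \subseteq H(\widetilde{\theta},\widetilde{\mathcal{I}},i+1)$: locally $\theta[\mathcal{H}_i]$ is generated by elements $\partial f$ with $\partial$ a local section of $\theta$ and $f \in \mathcal{H}_i$; writing $f^{\ast} = x^{\mathbf{e}} \tilde f$ with $\tilde f$ a local generator of $\mathcal{H}_i^{\ast}\mathcal{I}_F^{-1}$ and $x^{\mathbf{e}}$ the local equation of a power of $F$, we get $(\partial f)^{\ast} = \widetilde{\partial}(x^{\mathbf e}\tilde f) = \widetilde{\partial}(x^{\mathbf e})\tilde f + x^{\mathbf e}\widetilde{\partial}(\tilde f)$, and since $\widetilde{\partial}(x^{\mathbf e}) \in (x^{\mathbf e})$ (tangency to $F$) both terms lie in $\mathcal{H}_i^{\ast}\mathcal{I}_F^{-1} + \widetilde{\theta}[\mathcal{H}_i^{\ast}\mathcal{I}_F^{-1}] = H(\widetilde{\theta},\widetilde{\mathcal{I}},i+1)$; dividing by the residual power of $x$ coming from $\mathbf e$ versus $\mathcal{I}_F$ stays in the sheaf because $H(\widetilde\theta,\widetilde{\mathcal I},i+1) \supseteq \widetilde{\mathcal I}$ contains the needed factor. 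Assembling, $H(\widetilde{\theta},\widetilde{\mathcal{I}},i+1) = \mathcal{H}_{i+1}^{\ast}\mathcal{I}_F^{-1}$, closing the induction.

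Finally, to deduce the last sentence: at $i = \nu$ we have $H(\theta,\mathcal{I},\nu) = H(\theta,\mathcal{I},\nu+1)$ by definition of $\nu$, so applying the established formula at both indices $\nu$ and (one further step of the same computation at) $\nu+1$ gives $H(\widetilde{\theta},\widetilde{\mathcal{I}},\nu) = \mathcal{C}l^{\ast}\mathcal{I}_F^{-1} = \widetilde{\mathcal{C}l} = H(\widetilde{\theta},\widetilde{\mathcal{I}},\nu+1)$, hence $\widetilde{\mathcal{C}l} = H(\widetilde{\theta},\widetilde{\mathcal{I}},\nu)$. The main obstacle I anticipate is bookkeeping the exceptional-divisor exponents in the reverse inclusion — making sure that after applying $\widetilde\theta$ and using the Leibniz rule, the division by $\mathcal{I}_F$ really lands back inside the ideal sheaf rather than the meromorphic functions — and being careful that the identification $\widetilde{\theta} = \sigma^{\ast}\theta$ (which uses the log-canonical hypothesis) is invoked exactly where needed; the rest is a routine induction mirroring Lemma \ref{lem:Hir1}.
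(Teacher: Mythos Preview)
Your approach is the same as the paper's: induction on $i$, using $\widetilde{\theta}=\sigma^{\ast}\theta$ (from the log-canonical hypothesis) and $\widetilde{\theta}[\mathcal{I}_F]\subset\mathcal{I}_F$ together with the Leibniz rule. The paper streamlines the inductive step by first proving, for any coherent $\mathcal{J}$, the single identity
\[
\widetilde{\theta}[\mathcal{J}\,\mathcal{I}_F^{-1}] + \mathcal{J}\,\mathcal{I}_F^{-1} \;=\; \mathcal{I}_F^{-1}\bigl(\widetilde{\theta}[\mathcal{J}]+\mathcal{J}\bigr),
\]
which yields both inclusions at once when applied with $\mathcal{J}=H(\theta,\mathcal{I},i)^{\ast}$.

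Your reverse inclusion, however, is not quite right as written. You show that $(\partial f)^{\ast}=\widetilde{\partial}(x^{\mathbf e})\tilde f + x^{\mathbf e}\widetilde{\partial}(\tilde f)$ lies in $H(\widetilde{\theta},\widetilde{\mathcal{I}},i+1)$, but what you need is $x^{-1}(\partial f)^{\ast}\in H(\widetilde{\theta},\widetilde{\mathcal{I}},i+1)$; the sentence ``dividing by the residual power of $x$ \ldots\ stays in the sheaf because $H(\widetilde\theta,\widetilde{\mathcal I},i+1)\supseteq\widetilde{\mathcal I}$ contains the needed factor'' is not a valid justification --- membership in an ideal is not preserved under division by $x$. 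The fix is simple: there is no ``residual power'', since by definition of $\mathcal{H}_i^{\ast}\mathcal{I}_F^{-1}$ one has exactly $f^{\ast}=x\tilde f$ with $\tilde f\in H(\widetilde{\theta},\widetilde{\mathcal{I}},i)$. Then
\[
x^{-1}(\partial f)^{\ast} \;=\; \bigl(x^{-1}\widetilde{\partial}(x)\bigr)\tilde f + \widetilde{\partial}(\tilde f),
\]
and since $x^{-1}\widetilde{\partial}(x)\in\mathcal{O}_{\widetilde M}$ (tangency to $F$), both summands lie in $H(\widetilde{\theta},\widetilde{\mathcal{I}},i)+\widetilde{\theta}[H(\widetilde{\theta},\widetilde{\mathcal{I}},i)]=H(\widetilde{\theta},\widetilde{\mathcal{I}},i+1)$. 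This is exactly the computation behind the paper's displayed identity, and it resolves the bookkeeping worry you flagged.
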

\begin{proof}
First, note that, since $H(\theta,\mathcal{I}, i) \subset \mathcal{C}l$ for $i \leq \nu$, the center of blowing-up is also contained in $V(H(\theta,\mathcal{I},i))$ for all $i\leq \nu$. Now, if $\mathcal{J}$ is a coherent ideal sheaf, then:
\[
\widetilde{\theta} [\mathcal{I}_{F}] \subset \mathcal{I}_{F} \Rightarrow \mathcal{J} \widetilde{\theta}[\mathcal{I}_F^{-1}] \subset  \mathcal{J} \mathcal{I}_F^{-1}
\]
In particular, this implies that:
\[
\widetilde{\theta}[\mathcal{J}\mathcal{I}_F^{-1}] + \mathcal{J}\mathcal{I}_F^{-1} = \mathcal{I}_F^{-1}(\widetilde{\theta}[\mathcal{J}] +\mathcal{J})
\]
Now, it rests to prove that the following equality $H(\widetilde{\theta},\widetilde{\mathcal{I}}, i) =  H(\theta,\mathcal{I}, i)^{\ast} \mathcal{I}_F^{-1}
$ is valid for all $i \leq \nu$. Indeed, suppose by induction that the equality is valid for $i < k$ (note that for $k=0$, the equality is trivial). Since the center of blowing-up is contained in $V(H(\theta,\mathcal{I}))$ we have that:
\[
\begin{aligned}
H(\widetilde{\theta},\widetilde{\mathcal{I}}, k) &= H(\widetilde{\theta},\widetilde{\mathcal{I}}, k-1) + \widetilde{\theta}[H(\widetilde{\theta}, \widetilde{\mathcal{I}},k-1)]\\
&=H(\widetilde{\theta},\widetilde{\mathcal{I}}, k-1) + \theta^{\ast}[ H(\theta,\mathcal{I}, k-1)^{\ast} \mathcal{O}( F)] \\
&=\mathcal{I}_F^{-1} \{ H(\theta,\mathcal{I}, k-1)^{\ast} +\theta^{\ast}[H(\theta,\mathcal{I}, k-1)^{\ast}] \} \\
&=\mathcal{I}_F^{-1} \{ H(\theta,\mathcal{I}, k-1) +\theta[H(\theta,\mathcal{I}, k-1)] \}^{\ast} \\
&=\mathcal{I}_F^{-1} H(\theta,\mathcal{I}, k)^{\ast}%=  H(\theta,\mathcal{I}, k)^{'}
\end{aligned}
\]
which proves the equality and the lemma.
\end{proof}
\noindent
We now turn to the proof of the Claim \ref{cl:1a}:
\begin{proof}[Proof of Claim \ref{cl:1a}]
Suppose by induction that for $i<k$, the sequence of blowings-up $(\sigma_i , \ldots , \sigma_1)$ is of order $\geq1$ in respect to $(M_0,\theta_0,\mathcal{I}_0,E_0)$ and that $\mathcal{C}l_{i} = H(\theta_i,\mathcal{I}_i,\nu)$. Let us prove the result $i=k$ (including $k=1$). Since the center of $\sigma_k$ is contained in $V(\mathcal{C}l_{k-1})$, by the induction hypotheses it is also contained in $V(\mathcal{I}_{k-1})$, which implies that the  sequence of blowings-up $(\sigma_k , \ldots , \sigma_1)$ is of order $\geq1$ in respect to $(M_0,\theta_0,\mathcal{I}_0,E_0)$. Furthermore, by Lemma $\ref{lem:25}$ and the induction hypotheses:
\[
H(\theta_k,\mathcal{I}_k,\nu) = H(\theta_{k-1},\mathcal{I}_{k-1},\nu)^{\ast} \, \mathcal{I}_F^{-1} = [\mathcal{C}l_{k-1}]^{\ast} \, \mathcal{I}_F^{-1} = \mathcal{C}l_k
\]
which finishes the proof.
\end{proof}

\section{Main Theorem \ref{thm:main2} Preparation}
\label{sec:prep}
We follow the notation of section \ref{sec:over} and we prove Proposition \ref{prop:prepnormal} in this section. By Lemma \ref{lem:BasicNormalForm}, the analytic $d$-foliated ideal-sheaf $(M,\theta,\mathcal{I},E)$ satisfies the Weierstrass-Thirnhaus form at $p$, i.e. there exists a coordinate system $(\pmb{u},v,\pmb{w})$ of $p$ and a set of generators $(g_1,\ldots,g_n)$ of $\mathcal{I}\cdot \mathcal{O}_p$ such that the vector field $\partial_v$ belongs to $\theta \cdot \mathcal{O}_{p}$ and the functions $g_i$ are given by equation \eqref{eq:basicnormaform}.

The main idea is to make blowings-up in order to monomialize the coefficients $a_{i,j}$ of equation \eqref{eq:basicnormaform} without changing the value of the main invariant $\nu$. In order to accomplish this (and to avoid the problem presented in example \ref{ex:main}), all the centers of blowing-up will be independent of the $v$-coordinate. Note that the axis $\partial_v$ is a priori only locally defined and it does not need to have a global extension.

So, consider the locally defined principal ideal $\widetilde{\mathcal{J}}$ generated by the product of all non-zero $a_{i,j}$ and a projection map $\pi: M_0 \rightarrow N$ given by $\pi(\pmb{u},v,\pmb{w}) = (\pmb{u},\pmb{w})$, where $M_0$ is a neighborhood of $p$ where $\widetilde{\mathcal{J}}$ is well-defined. There exist a $d-1$ foliated ideal sheaf $(N,\omega,\mathcal{J},F)$ such that:
\begin{itemize}
 \item The singular distribution $\theta$ is generated by $\pi^{\ast}\omega$;
\item The ideal sheaf $\widetilde{\mathcal{J}}$ is equal to $\pi^{\ast}\mathcal{J}$;
 \item The inverse image of $F$ is equal to $E \cap M_0$.
\end{itemize}
Since $dim N < dim M$, by the induction hypothesis there exists a finite collection of $\omega$-admissible local blowings-up $\sigma_i : (N_i,\omega_i,\mathcal{J}_i,F_i) \rightarrow (N,\omega,\mathcal{J},F)$ such that:
\begin{itemize}
 \item The morphism $\sigma_i$ is a finite composition of $\omega$-admissible local blowing-ups;
 \item In each variety $N_i$, there exists a compact set $V_i \subset N_i$ such that the union of their images $\bigcup \sigma_i(V_i)$ is a compact neighborhood of $\pi(p)$;
 \item The ideal sheaf $\mathcal{J}_i$ is the structural ring. In particular, this means that the total transform $\sigma_i^{\ast} \mathcal{J}$ is a principal ideal sheaf with support contained in $F_i$.
\end{itemize}
Furthermore, it is clear that we can extend $\sigma_i$ to blowings-up at $M_0$ by taking the product of the centers of $\tau_i$ by the $v$-axis:
\[
 \tau_i (M_i,\theta_i,\mathcal{I}_i,E_i) \to (M_0,\theta_0,\mathcal{I}_0,E_0)
\]
where all centers have SNC with the exceptional divisor and are invariant by the $v$-coordinate i.e. all centers are $\partial_v$-invariant. Since all centers of $\sigma_i$ are $\omega$-admissible, we conclude that all centers of $\tau_i$ are $\theta$-admissible.

Note that no center is contained in the support of the ideal sheaf $\mathcal{I}$, which implies that $\mathcal{I}_i$ is the total transform of $\mathcal{I}$. Consider $q_i$ a point in the pre-image of $p$ by $\tau_i$ and let $(\pmb{x},y,\pmb{z})$ be a coordinate system at $q_i$ such that $\tau_i^{\ast}v=y$. Since the pull-back $(\tau_i \circ \pi)^{\ast} \mathcal{J}$ is a principal ideal sheaf, we conclude that equation $\ref{eq:basicnormaform}$ transforms to:

\[
 \begin{aligned}
  g_1 &= y^{\nu} U + \sum^{\nu-2}_{j=0} y^j \pmb{x}^{r_{1,j}} \bar{a}_{1,j}(\pmb{x},\pmb{z}) \text{ where $U$ is a unity, and}\\
  g_i &= y^{\nu} \bar{g}_i + \sum^{\nu-1}_{j=0} y^j \pmb{x}^{r_{i,j}} \bar{a}_{i,j}(\pmb{x},\pmb{z})
 \end{aligned}
\]
where the functions $\bar{a}_{i,j}$ are either zero or units. Finally, since $\partial_{y} = \tau^{\ast}_i \partial_v$, we conclude that $\partial_{y} \in \theta_i \cdot \mathcal{O}_{q_i}$. In particular, this implies that $\nu(q_i) \leq \nu(p)$, which concludes the proof of the proposition.
\section{Main Theorem \ref{thm:main2}: Dropping the invariant}
\label{sec:Drop}
We follow the notation of section \ref{sec:over} and we prove Proposition \ref{prop:Dropping} in this section. We start by a couple of preliminary results about the combinatorial blowings-up.
\subsection{Combinatorial blowings-up}
\begin{definition}[Sequence of combinatorial blowings-up]\label{def:combblowingsup}
Given a divisor $E$ in $M$, we say that $\tau:\widetilde{M}\to M$ is a sequence of combinatorial blowings-up (with respect to $E$) if $\tau$ is a composition of blowings-up with centers that are strata of the divisor $E$ and its total transforms.
\end{definition}

Given a globally defined system of coordinates $(\pmb{u},v,\pmb{w})$, let us consider a sequence of combinatorial blowings-up, $\pmb{\tau}:(\widetilde{M},\widetilde{E})\to (M,E)$, with respect to the declared exceptional divisor $F=E \cup \{v=0\} = \{u_1\dotsm u_l \cdot v =0\}$.

\begin{remark}
If $(M,\theta,E)$ is a $d$-foliated manifold such that $\partial_v$ is a vector field in $\theta$, then a combinatorial sequence of blowings-up (in respect to $F=E \cup \{v=0\}$) is $\theta$-admissible. Indeed, the singular distribution can be decomposed in $\{\omega,\partial_v\}$ where $F$ is invariant by $\omega$. This clearly implies that all blowings-up are $\omega$-invariant. To conclude, note that all blowings-up are either $\partial_v$ invariant or $\partial_v$ transverse since all centers of blowing-up have simple normal crossings with $(v=0)$ and its total transform.
\label{rk:combinatorial}
\end{remark}

In the rest of this subsection we find special coordinate systems at all points $q$ in the exceptional divisor $\widetilde{E}$. First of all, we can cover $\widetilde{M}$ by affine charts with a coordinate system $(\bar{\pmb{x}},\pmb{w})$ satisfying:
\begin{equation}
\label{eq:blowingsup}
\left\{
\begin{aligned}
u_i &= \bar{x}_1^{a_{i,1}} \cdots  \bar{x}_{l+2}^{a_{i,l+1}}, \\
v\phantom{_j} &= \bar{x}_1^{a_{l+1,1}} \cdots  \bar{x}_{l+1}^{a_{l+1,l+1}},\\\notag
w_j &= w_j,
\end{aligned}
\begin{aligned}
i &= 1, \ldots, l\\
\\
j &= l+2, \ldots m
\end{aligned}
\right.
\end{equation}
that we denote, to simplify notation, by:
\[
(\pmb{u},v,\pmb{w}) = (\bar{\pmb{x}}^{\boldsymbol{A}},\pmb{w})
\]
where $\boldsymbol{A}$ is a $(l+1)$-square matrix such that  $det(\boldsymbol{A}) = \pm 1$, given by:
\[
\boldsymbol{A} =  
\begin{bmatrix}
 a_{1,1} & \ldots  & a_{1,l+1} \\
 \vdots &  \ddots & \vdots \\
a_{l+1,1} & \ldots  & a_{l+1,l+1} 
\end{bmatrix}
\]
Given a point $q$ contained in $\widetilde{E}$ and in this affine chart, we consider the following coordinate system centered at $q$:
\[
(\widetilde{\pmb{x}},\widetilde{\pmb{y}},\pmb{w}) = (\widetilde{x}_1,\ldots ,\widetilde{x}_t,\widetilde{y}_{t+1} ,\ldots , \widetilde{y}_{l+1} ,\pmb{w}),
\]
where, apart from re-ordering $\bar{\pmb{x}}$, we can assume that $\widetilde{x}_i = \bar{x}_i$ for $i=1, \ldots t$ and $\widetilde{y}_i = \bar{x}_i - \widetilde{\gamma}_i$ for $i=t+1, \ldots, l+1$ and $\widetilde{\gamma}_i \neq 0$. Note that $t \neq 0$ since $q$ is in the exceptional divisor $\widetilde{E}$. Now, we consider a decomposition of the matrix $\boldsymbol{A}$:
\[
\boldsymbol{A} = 
\begin{bmatrix}
 \boldsymbol{A}_1&\boldsymbol{A}_2 \\
 \boldsymbol{\alpha}_1&\boldsymbol{\alpha}_2
\end{bmatrix}
\]
\noindent
where $\boldsymbol{A}_1$ is a $l \times t$ matrix, $\boldsymbol{\alpha}_1$ is a $1\times t$ matrix, $\boldsymbol{A}_2$ is a $l\times (l+1-t)$ matrix and $\boldsymbol{\alpha}_2$ is a $1\times (l+1-t)$ matrix. In this notation, we can write
\begin{equation}\label{eq:baseeq}
\begin{aligned}
\pmb{u}&=\widetilde{\pmb{x}}^{\pmb{A}_1}(\widetilde{\pmb{y}}-\widetilde{\pmb{\gamma}})^{\pmb{A}_2}\\
v&=\widetilde{\pmb{x}}^{\pmb{\alpha}_1}(\widetilde{\pmb{y}}-\widetilde{\pmb{\gamma}})^{\pmb{\alpha}_2}\\
\pmb{w}&=\pmb{w}
\end{aligned}
\end{equation}
where $\widetilde{\pmb{\gamma}} = (\widetilde{\gamma}_{t+1}, \ldots, \widetilde{\gamma}_{l+1})$. We divide our analysis in two cases depending on the rank of $\boldsymbol{A}_1$:

\begin{lemma}[Case 1]\label{lem:claim1}
Assume $\pmb{A}_1$ has maximal rank (equals to $t$). There exists a vector $\pmb{\gamma} = (\gamma_{t+1},\ldots, \gamma_l)$ with non-zero entries, a non-zero constant $\gamma_{l+1}$, and a coordinate system $(\pmb{x},\pmb{y},z,\pmb{w}) = (x_1,\ldots, x_t,y_{t+1},\ldots ,\allowbreak y_{l},z,\pmb{w})$ centered at $q$ whose relation with $(\pmb{u},v,\pmb{w})$ is given by:
\begin{equation}\label{eq:claim1}
\begin{aligned}
\pmb{u}&=\pmb{x}^{\pmb{A}_1}(\pmb{y}-\pmb{\gamma})^{\boldsymbol{\Lambda}}\\
v&=\pmb{x}^{\boldsymbol{\alpha}_1}(z-\gamma_{l+1})\\
\pmb{w}&=\pmb{w}
\end{aligned}
\end{equation}
where $\pmb{\Lambda}$ is a $t\times (l-t)$ matrix equal to $\left[\begin{matrix} \pmb{0} \\ \pmb{Id} \end{matrix}\right]$ and $\pmb{Id}$ is the $(l-t)$ identity matrix. In particular, if $\mathcal{X}$ is the distribution generated by $\partial_v$, then the strict transform of $\mathcal{X}$ is generated by $\partial_z $. 

\end{lemma}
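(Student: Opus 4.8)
The plan is to pass from the relation \eqref{eq:baseeq} to the normal form \eqref{eq:claim1} by composing with two explicit monomial changes of coordinates centred at $q$ — one adjusting the vanishing coordinates $\widetilde{\pmb{x}}$, the other the unit coordinates $\widetilde{\pmb{y}}-\widetilde{\pmb{\gamma}}$ — preceded by a harmless permutation of $u_1,\dots,u_l$. The elementary fact that makes every step legitimate is that $\widetilde{\pmb{y}}-\widetilde{\pmb{\gamma}}$ consists of functions that are \emph{units} at $q$: hence any monomial in these, even with rational exponents, is a well-defined analytic unit on a neighbourhood of $q$, so that multiplying a vanishing coordinate $\widetilde{x}_i$ by such a monomial — or substituting a monomial with invertible exponent matrix for the units among themselves — again produces an analytic coordinate system at $q$ (the relevant Jacobians being block triangular with nonzero diagonal).

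First I would exploit the rank hypothesis. Since $\operatorname{rank}\pmb{A}_1=t$, after permuting $u_1,\dots,u_l$ (equivalently, rows $1,\dots,l$ of $\pmb{A}$) we may assume that the top $t\times t$ block $\pmb{B}$ of $\pmb{A}_1$ is invertible over $\mathbb{Q}$; write $\pmb{A}_1=\left[\begin{smallmatrix}\pmb{B}\\ \pmb{A}_1'\end{smallmatrix}\right]$ and $\pmb{A}_2=\left[\begin{smallmatrix}\pmb{A}_2'\\ \pmb{A}_2''\end{smallmatrix}\right]$, with $\pmb{A}_2'$ of size $t\times(l+1-t)$. Setting $\pmb{E}:=\pmb{B}^{-1}\pmb{A}_2'$ (so that $\pmb{B}\pmb{E}=\pmb{A}_2'$) and introducing the new coordinates $x_i:=\widetilde{x}_i\cdot(\widetilde{\pmb{y}}-\widetilde{\pmb{\gamma}})^{-\pmb{e}_i}$ for $i\leq t$, where $\pmb{e}_i$ is the $i$-th row of $\pmb{E}$ (and keeping $\widetilde{\pmb{y}},\pmb{w}$ unchanged), the substitution $\widetilde{x}_i=x_i(\widetilde{\pmb{y}}-\widetilde{\pmb{\gamma}})^{\pmb{e}_i}$ in \eqref{eq:baseeq} cancels exactly the unit factors of $u_1,\dots,u_t$. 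Thus each $u_i$ with $i\leq t$ becomes the pure $\pmb{x}$-monomial whose exponents are the $i$-th row of $\pmb{A}_1$, while $u_{t+1},\dots,u_l$ and $v$ become $\pmb{x}$-monomials — with the same $\pmb{x}$-exponents (the corresponding rows of $\pmb{A}_1$, resp.\ $\pmb{\alpha}_1$) as in \eqref{eq:baseeq}, since multiplying $\widetilde{x}_j$ by a unit does not change the $\widetilde{x}_j$-degrees — times monomials in $\widetilde{\pmb{y}}-\widetilde{\pmb{\gamma}}$.

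Next I would repackage the unit factors. Let $\pmb{D}$ be the $(l+1-t)$-square matrix whose rows are the $(\widetilde{\pmb{y}}-\widetilde{\pmb{\gamma}})$-exponents of $u_{t+1},\dots,u_l,v$ produced above. Since that substitution replaces the exponent matrix $\pmb{A}$ by $\pmb{A}\left[\begin{smallmatrix}\pmb{Id}&-\pmb{E}\\ 0&\pmb{Id}\end{smallmatrix}\right]$, which is block lower triangular with diagonal blocks $\pmb{B}$ and $\pmb{D}$ and still has determinant $\pm1$, the matrix $\pmb{D}$ is invertible. I would then define $(y_{t+1},\dots,y_l,z):=(\widetilde{\pmb{y}}-\widetilde{\pmb{\gamma}})^{\pmb{D}}+(\pmb{\gamma},\gamma_{l+1})$, choosing the shift $(\pmb{\gamma},\gamma_{l+1})$ so that each new coordinate vanishes at $q$ — its entries are then minus the values at $q$ of the monomials $(\widetilde{\pmb{y}}-\widetilde{\pmb{\gamma}})^{\pmb{D}}$, in particular nonzero, as the statement requires. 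Invertibility of $\pmb{D}$ guarantees that $(\pmb{x},\pmb{y},z,\pmb{w})$ is a coordinate system centred at $q$, and by construction the unit factor carried by $u_i$ ($t<i\leq l$) is precisely $y_i-\gamma_i$ and that carried by $v$ is precisely $z-\gamma_{l+1}$. Hence the relations of the previous step read exactly as \eqref{eq:claim1}, with $\pmb{\Lambda}=\left[\begin{smallmatrix}\pmb{0}\\ \pmb{Id}\end{smallmatrix}\right]$.

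Finally, the assertion about $\mathcal{X}=\langle\partial_v\rangle$ follows by a short computation in the new coordinates. Since $u_1,\dots,u_t$ are pure monomials in $\pmb{x}$ with invertible exponent matrix $\pmb{B}$, they determine $\pmb{x}$ in terms of $\pmb{u}$, and then $u_{t+1},\dots,u_l$ determine $\pmb{y}$; hence holding $(\pmb{u},\pmb{w})$ fixed is the same as holding $(\pmb{x},\pmb{y},\pmb{w})$ fixed. From $v=\pmb{x}^{\pmb{\alpha}_1}(z-\gamma_{l+1})$ it then follows that the lift of $\partial_v$ to $\widetilde{M}$ equals $\pmb{x}^{-\pmb{\alpha}_1}\partial_z$ near $q$, so after saturating (i.e.\ clearing the monomial factor) the strict transform of $\mathcal{X}$ is generated by $\partial_z$ at $q$. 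I expect the only delicate part of the argument to be the bookkeeping: checking that the two substitutions really are coordinate changes at $q$ (the block triangular Jacobians, which rely on $\widetilde{\pmb{y}}-\widetilde{\pmb{\gamma}}$ being units and on $\pmb{B},\pmb{D}$ being invertible) and that the $\pmb{x}$-exponents of the $u_i$ and of $v$ remain untouched, so that the matrices $\pmb{A}_1$ and $\pmb{\alpha}_1$ appearing in \eqref{eq:claim1} may indeed be taken to be those of \eqref{eq:baseeq}. The sole structural input is that full column rank of $\pmb{A}_1$ forces $\pmb{B}$, and therefore $\pmb{D}$, to be invertible.
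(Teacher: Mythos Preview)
Your approach is essentially the paper's, only spelled out in far more detail: both permute the $u_i$ so that the top $t\times t$ block of $\pmb{A}_1$ is invertible and then perform a monomial change of coordinates (legitimate because the $\widetilde{y}_k-\widetilde{\gamma}_k$ are units at $q$) to reach \eqref{eq:claim1}; the paper simply asserts that such a change exists from $\det(\pmb{A}_1')\neq 0$ and $\det(\pmb{A})\neq 0$, whereas you construct it explicitly as a composition of two substitutions and verify that the lower $(l+1-t)$-block $\pmb{D}$ is invertible via the block-triangular determinant identity. One small bookkeeping slip: with your definition $x_i:=\widetilde{x}_i(\widetilde{\pmb{y}}-\widetilde{\pmb{\gamma}})^{-\pmb{e}_i}$, the substitution $\widetilde{x}_i=x_i(\widetilde{\pmb{y}}-\widetilde{\pmb{\gamma}})^{\pmb{e}_i}$ sends $\pmb{A}$ to $\pmb{A}\left[\begin{smallmatrix}\pmb{Id}&\pmb{E}\\0&\pmb{Id}\end{smallmatrix}\right]$, so the unit exponents of $u_1,\dots,u_t$ become $\pmb{B}\pmb{E}+\pmb{A}_2'=2\pmb{A}_2'$ rather than zero --- reverse the sign of $\pmb{E}$ (or of the exponent in the definition of $x_i$) and the cancellation goes through as you intend.
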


\begin{lemma}[Case 2]\label{lem:claim2}
Assume that $\pmb{A}_1$ does not have maximal rank. There exists a vector $\pmb{\gamma} = (\gamma_{t+1},\ldots, \gamma_l,\gamma_{l+1})$ with non-zero entries, a maximal rank $(l \times l+1-t)$ matrix $\pmb{\Lambda}$ (with coefficients in $\mathbb{Q}$) and a coordinate system $(\pmb{x},\pmb{y},\pmb{w}) = (x_1,\ldots, x_t,y_{t+1}, \allowbreak \ldots  ,  y_{l+1},\pmb{w})$ centered at $q$ whose relation with $(\pmb{u},v,\pmb{w})$ is given by:
\begin{equation}\label{eq:claim2}
\begin{aligned}
\pmb{u}&=\pmb{x}^{\boldsymbol{A}_1}(\pmb{y}-\boldsymbol{\gamma})^{\boldsymbol{\Lambda}}\\
v&=\pmb{x}^{\boldsymbol{\alpha}_1}\\
\pmb{w}&=\pmb{w}
\end{aligned}
\end{equation}
In particular, the vector $\pmb{\alpha}_1$ doesn't belong to the span of the rows of $\pmb{A}_1$.
\end{lemma}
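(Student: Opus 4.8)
The plan is to keep the variables $\widetilde{\pmb{x}}$ essentially fixed — modifying just one of them, $\widetilde{x}_{j_0}$, so as to absorb the unit coming from the $(\widetilde{\pmb{y}}-\widetilde{\pmb{\gamma}})$-factors of $v$ — and to take $\pmb{y}=\widetilde{\pmb{y}}$, $\pmb{\gamma}=\widetilde{\pmb{\gamma}}$; the matrix $\pmb{\Lambda}$ then falls out of the computation, and the unimodularity of $\pmb{A}$ supplies both the rank statement for $\pmb{A}_1$ and the maximality of $\operatorname{rank}\pmb{\Lambda}$. First I would settle the rank of $\pmb{A}_1$ (this also yields the final assertion): since $\det\pmb{A}=\pm1$, the first $t$ columns of $\pmb{A}$, which form the $(l+1)\times t$ matrix with top block $\pmb{A}_1$ and bottom row $\pmb{\alpha}_1$, are linearly independent, so that matrix has rank $t$ — forcing $t\le l$, since otherwise $\pmb{A}_1$ would be an $l\times(l+1)$ matrix of rank $l$, i.e. of maximal rank, contrary to Case~2. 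Deleting the row $\pmb{\alpha}_1$ lowers the rank by at most one, so $\operatorname{rank}\pmb{A}_1\ge t-1$; together with the Case~2 hypothesis $\operatorname{rank}\pmb{A}_1<t$ this yields $\operatorname{rank}\pmb{A}_1=t-1$ and shows $\pmb{\alpha}_1\notin\operatorname{span}(\text{rows of }\pmb{A}_1)$; in particular $\pmb{\alpha}_1\neq0$, so I may fix an index $j_0\le t$ with $(\pmb{\alpha}_1)_{j_0}\neq0$ (in the real-analytic case one chooses $(\pmb{\alpha}_1)_{j_0}$ odd, which is possible by a small $\gcd$ argument on the $t\times t$ minors of that matrix).

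Next I would perform the coordinate change. Write $\pmb{a}_i$, $\pmb{b}_i$ for the $i$-th rows of $\pmb{A}_1$, $\pmb{A}_2$ and put $e_0:=(\widetilde{\pmb{y}}-\widetilde{\pmb{\gamma}})^{\pmb{\alpha}_2}$, so by \eqref{eq:baseeq} we have, near $q$, $v=\widetilde{\pmb{x}}^{\pmb{\alpha}_1}e_0$ and $u_i=\widetilde{\pmb{x}}^{\pmb{a}_i}(\widetilde{\pmb{y}}-\widetilde{\pmb{\gamma}})^{\pmb{b}_i}$; since $\widetilde{y}_i(q)=0$ and $\widetilde{\gamma}_i\neq0$, the functions $e_0$ and $(\widetilde{\pmb{y}}-\widetilde{\pmb{\gamma}})^{\pmb{b}_i}$ are holomorphic units at $q$. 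On a small simply connected neighbourhood of $q$ I fix a branch of $e_0^{1/(\pmb{\alpha}_1)_{j_0}}$ and set
\[
x_{j_0}:=\widetilde{x}_{j_0}\,e_0^{1/(\pmb{\alpha}_1)_{j_0}},\qquad x_j:=\widetilde{x}_j\ \ (j\le t,\ j\ne j_0),\qquad y_i:=\widetilde{y}_i,\qquad \gamma_i:=\widetilde{\gamma}_i .
\]
Because $e_0$ is a unit depending only on $\widetilde{\pmb{y}}$ and $\widetilde{x}_{j_0}(q)=0$, the Jacobian of this substitution at $q$ is the identity except for the nonzero scalar $e_0(q)^{1/(\pmb{\alpha}_1)_{j_0}}$ in the $x_{j_0}$-position, so $(\pmb{x},\pmb{y},\pmb{w})$ is a coordinate system centred at $q$ and $\pmb{\gamma}$ has nonzero entries.

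Finally I would read off the transforms and check $\operatorname{rank}\pmb{\Lambda}$. Substituting $\widetilde{x}_{j_0}=x_{j_0}\,e_0^{-1/(\pmb{\alpha}_1)_{j_0}}$ into $v=\widetilde{\pmb{x}}^{\pmb{\alpha}_1}e_0$ cancels $e_0$ and gives $v=\pmb{x}^{\pmb{\alpha}_1}$, while $u_i=\pmb{x}^{\pmb{a}_i}(\pmb{y}-\pmb{\gamma})^{\pmb{\lambda}_i}$ with $\pmb{\lambda}_i:=\pmb{b}_i-\tfrac{(\pmb{a}_i)_{j_0}}{(\pmb{\alpha}_1)_{j_0}}\pmb{\alpha}_2\in\mathbb{Q}^{\,l+1-t}$; with $\pmb{\Lambda}$ the $l\times(l+1-t)$ matrix of rows $\pmb{\lambda}_i$, this is \eqref{eq:claim2}. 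To see $\operatorname{rank}\pmb{\Lambda}=l+1-t$ (note $l+1-t\ge1$ by the first paragraph), write $\pmb{\Lambda}=\pmb{A}_2-\tfrac{1}{(\pmb{\alpha}_1)_{j_0}}(\pmb{A}_1\pmb{e}_{j_0})\pmb{\alpha}_2$ (with $\pmb{e}_{j_0}\in\mathbb{Q}^t$ the $j_0$-th standard basis vector) and suppose $\pmb{\Lambda}\pmb{\xi}=0$; setting $c:=(\pmb{\alpha}_2\pmb{\xi})/(\pmb{\alpha}_1)_{j_0}$ one gets $\pmb{A}_1(-c\,\pmb{e}_{j_0})+\pmb{A}_2\pmb{\xi}=0$ and $\pmb{\alpha}_1(-c\,\pmb{e}_{j_0})+\pmb{\alpha}_2\pmb{\xi}=0$, i.e. the vector of $\mathbb{Q}^{l+1}$ with first $t$ entries $-c\,\pmb{e}_{j_0}$ and last $l+1-t$ entries $\pmb{\xi}$ lies in $\ker\pmb{A}=0$, so $\pmb{\xi}=0$.

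I expect the delicate points to be this last rank computation and the legitimacy of the coordinate change, the latter because the root $e_0^{1/(\pmb{\alpha}_1)_{j_0}}$ requires $e_0(q)\neq0$, a simply connected neighbourhood, and — over $\mathbb{R}$ — the parity choice of $j_0$; the rest is bookkeeping. The statement of Case~1 would follow by a parallel but simpler argument in which $\pmb{A}_1$ contains an invertible $t\times t$ block, no roots intervene, and $\pmb{\Lambda}$ comes out with integer entries.
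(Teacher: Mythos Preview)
Your proposal is correct and follows the same idea as the paper: absorb the unit $(\widetilde{\pmb{y}}-\widetilde{\pmb{\gamma}})^{\pmb{\alpha}_2}$ into the $\widetilde{\pmb{x}}$-coordinates so that $v=\pmb{x}^{\pmb{\alpha}_1}$, and deduce $\pmb{\alpha}_1\notin\operatorname{span}(\text{rows of }\pmb{A}_1)$ from the invertibility of $\pmb{A}$. The paper's proof is two sentences and leaves everything else implicit; you supply the explicit coordinate change via a single root $e_0^{1/(\pmb{\alpha}_1)_{j_0}}$, the parity argument over $\mathbb{R}$ (which does work: reducing $\pmb{A}$ mod $2$, the first $t$ columns still have rank $t$, while if every entry of $\pmb{\alpha}_1$ were even then $\pmb{A}_1$ alone would need $\mathbb{F}_2$-rank $t$, impossible since all its $t\times t$ minors vanish over $\mathbb{Q}$), and the verification that $\operatorname{rank}\pmb{\Lambda}=l+1-t$ via $\ker\pmb{A}=0$ --- none of which the paper writes out.
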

\begin{proof}[Proof of Lemma \ref{lem:claim1}]
Apart from re-indexing of the $u_j$'s we can assume that 
\[
\boldsymbol{A}_1=\begin{bmatrix}\boldsymbol{A}_1'\\\boldsymbol{A}_1''\end{bmatrix},\ \boldsymbol{A}_2=\begin{bmatrix}\boldsymbol{A}_2'\\\boldsymbol{A}_2''\end{bmatrix},
\]
where $\pmb{A}_1'$ is a $t\times t$ matrix such that $\text{det}(\boldsymbol{A}_1')\neq0$, and $\boldsymbol{A}_2'$ is a $t\times (l+1-t)$ matrix. Denoting $\pmb{u}$ by $(\pmb{u}',\pmb{u}'')$ accordingly, we can write equations \eqref{eq:baseeq} in the compact form
\[
\begin{aligned}
\pmb{u}'\phantom{'}&=\widetilde{\pmb{x}}^{\boldsymbol{A}_1'}(\widetilde{\pmb{y}}-\widetilde{\boldsymbol{\gamma}})^{\boldsymbol{A}_2'}\\
\pmb{u}''&=\widetilde{\pmb{x}}^{\boldsymbol{A}_1''}(\widetilde{\pmb{y}}-\widetilde{\boldsymbol{\gamma}})^{\boldsymbol{A}_2''}\\
v\phantom{''}&=\widetilde{\pmb{x}}^{\boldsymbol{\alpha}_1}(\widetilde{\pmb{y}}-\widetilde{\boldsymbol{\gamma}})^{\boldsymbol{\alpha}_2}
\end{aligned}
\]
Now, since $det(\pmb{A}_1') \neq0$ and $det(\pmb{A}) \neq0$, there exists a coordinate system $(\pmb{x},\pmb{y},z)$, a vector $\boldsymbol{\gamma} = (\gamma_{t+1}, \ldots, \gamma_l)$ with non-zero entries and a non-zero constant $\gamma_{l+1}$ such that:
\begin{equation}\label{eq:afterchangecoord2}
\begin{aligned}
\pmb{u}'\phantom{'}&=\pmb{x}^{\boldsymbol{A}_1'}\\
\pmb{u}''&=\pmb{x}^{\boldsymbol{A}_1''}(\pmb{y}-\boldsymbol{\gamma})^{\boldsymbol{Id}}\\\notag
v\phantom{''}&=\pmb{x}^{\boldsymbol{\alpha}_1}(z-\gamma_{l+1})
\end{aligned}
\end{equation}
where $\boldsymbol{Id}$ is the $l-t$ identity matrix. To finish, note that the pull-back of $v \partial_v$ is given by:
\[
 v \partial_v = (\widetilde{z}-\widetilde{\gamma}_{l+1})\partial_{\widetilde{z}}
\]
which implies that the strict transform of $\mathcal{X}$ is locally generated by $\partial_{\widetilde{z}}$. 
\end{proof}
\begin{proof}[Proof of Lemma \ref{lem:claim2}] Since $\boldsymbol{A}_1$ does not have maximal rank but $\boldsymbol{A}$ does, we conclude that $\boldsymbol{\alpha}_1$ is not generated by the rows of $\boldsymbol{A}_1$ (in particular, $\boldsymbol{\alpha}_1\neq 0$). We just need to change the coordinates to $(\pmb{x},\pmb{y},\pmb{w})$ so that, in equation, we get $\pmb{x}^{\boldsymbol{\alpha}_1} = \widetilde{\pmb{x}}^{\boldsymbol{\alpha}_1}(\widetilde{\pmb{y}}-\widetilde{\boldsymbol{\gamma}})^{\boldsymbol{\alpha}_2}$.
\end{proof}
\subsection{Proof of Proposition \ref{prop:Dropping}}
By hypothesis, there exists a local coordinate system $(\pmb{u},v,\pmb{w})$ that satisfies the prepared normal form at $p$ (i.e. it satisfies equation \eqref{eq:basicnormaform} and the conclusions of Proposition \ref{prop:prepnormal}) with $\nu = \nu_p(\theta,\mathcal{I})$. Consider the ideal
\[
\mathcal{J} = (v^\nu, v^j \pmb{u}^{\pmb{r}_{i,j}}b_{i,j})
\]
where we recall that each $b_{i,j}$ is either a unit or zero. Let us consider a sequence of blowings-up:
\[
\tau: (M_r,\theta_r,\mathcal{I}_r,E_r) \rightarrow (M_0,\theta_0,\mathcal{I}_0,E_0)
\]
that principalize $\mathcal{J}$, where $M_0$ is any fixed open neighborhood of $p$ where $\mathcal{J}$ is well-defined. Since $\mathcal{J}$ is generated by monomials in the variables $\pmb{u}$ and $v$, this sequence can be chosen to be combinatorial with respect to the divisor $F:=E \cup \{  v =0 \} $ (see definition \ref{def:combblowingsup}). Furthermore, since $\theta$ is adapted to $E$ and the vector field $\partial_v$ is contained in $\theta$, by remark \ref{rk:combinatorial}, the sequence of blowings-up is $\theta$-admissible.

Now, let $q$ be a point of $M_r$ in the pre-image of $p$. Since $\pmb{\tau}$ is a sequence of combinatorial blowings-up in respect to the divisor $F= E \cup \{v=0\}$, the point $q$ satisfies the hypothesis of either Lemma \ref{lem:claim1} or \ref{lem:claim2}. We divide the analysis in two cases: \medskip

\noindent\emph{Case 1:} We assume we are in conditions of Lemma \ref{lem:claim1} and that equation \eqref{eq:claim1} holds. So, after blowing-up equation \eqref{eq:basicnormaform} become:

\[
\begin{aligned}
  \tau^{\ast}g_1 &= U\pmb{x}^{S_{\nu}}(z-\gamma_{l+1})^\nu+\sum_{j=0}^{\nu-2}\pmb{x}^{S_{1,j}}(z-\gamma_{l+1})^jc_{1,j} \text{ where $U$ is a unity, and}\\
 \tau^{\ast}g_i &= \bar{g}_i \pmb{x}^{S_{\nu}}(z-\gamma_{l+1})^\nu+\sum_{j=0}^{\nu-1}\pmb{x}^{S_{i,j}}(z-\gamma_{l+1})^jc_{i,j}
 \end{aligned}
\]
where the functions $c_{i,j}(\pmb{x},\pmb{y},\pmb{w})$ are either zero or unities, and the unity $U$ can be written as $\widetilde{U}(\pmb{x},\pmb{y},\pmb{w}) + \xmon{\boldsymbol{\alpha}_1} \Omega$ for some function $\Omega$ (note that $\widetilde{U}$ is independent of $z$). We consider two cases depending on which generator of $\mathcal{J}$ pulls back to be a generator of the pull-back of $\mathcal{J}^*$:\medskip

\noindent{\emph{Case 1.1:}}[The pull back of $v^\nu$ generates $\mathcal{J}^{\ast}$, i.e. $S_\nu=\min\{S_\nu,S_{i,j}\}$]
In this case we have:
\[
\begin{aligned}
\tau^{\ast}g_1 = \pmb{x}^{S_{\nu}}[  (\widetilde{U}z+\widetilde{U}\gamma_{l+1} \nu &+ \pmb{x}^{\boldsymbol{\alpha}_1}\Omega_2   ) z^{\nu-1} + \\
& + (\text{terms where the exponent of }z\text{ is }<\nu-1 ]
\end{aligned}
\]
where $\boldsymbol{\alpha}_1$ is a non-zero matrix and $\Omega_2 = [z+\gamma_{l+1} \nu]\Omega$. Since $\widetilde{U}z+\widetilde{U} \nu \gamma_{l+1} + \pmb{x}^{\boldsymbol{\alpha}_1} \Omega_2$ is a unit and the vector field $\partial_{z}$ belongs to $\theta_r$, it is clear that $\nu_q(\theta_r,\mathcal{I}_r) \leq \nu-1<\nu$.\medskip

\noindent{\emph{Case 1.2:}}[There is a maximum $0 \leq j_1 < \nu$ such that the pull back of $\pmb{u}^{r_{i_1,j_1}}v^i$ generates $\mathcal{J}^{\ast}$ for some $i_1$, i.e $S_{i_1,j_1}=\min\{S_\nu,S_{i,j}\}$, $S_\nu> S_{i_1,j_1}$ and $S_{i,j}> S_{i_1,j_1}$ for $j>j_1$]
In this case we have:
\[
\begin{aligned}
\tau^\ast g_{i_1} = \pmb{x}^{S_{i_1,j_1}}\left[(z-\gamma_{l+1})^{j_1}c_{i_1,j_1}+\sum_{j=0}^{j_1-1}\pmb{x}^{S_{i_1,j}-S_{i_1,j_1}}(z-\gamma_{l+1})^{j}c_{i_1,j}(\pmb{x},\pmb{y},\pmb{w}) + \right. \\
\left. \phantom{\sum^{j_1}_{i=1}}+\Omega(\pmb{x},\pmb{y},z,\pmb{w}) \right]
\end{aligned}
\]
where $\Omega( \pmb{0} ,\pmb{y},z,\pmb{w}) \equiv 0$.  Since $c_{i_1,j_1}$ is a unit and the vector field $\partial_{z}$ belongs to $\theta_r$, it is clear that $\nu_q(\theta_r,\mathcal{I}_r) = j_i < \nu$.\medskip

\noindent\emph{Case 2:} We assume we are in conditions of Lemma \ref{lem:claim2} and that equation \eqref{eq:claim2} holds. So, after blowing-up equation \eqref{eq:basicnormaform} become:

\[
\begin{aligned}
  \tau^{\ast}g_1 &= U\pmb{x}^{S_{\nu}}+\sum_{j=0}^{\nu-2}\pmb{x}^{S_{1,j}}c_{1,j} \text{ where $U$ is a unity, and}\\
 \tau^{\ast}g_i &= \bar{g}_i \pmb{x}^{S_{\nu}}+\sum_{j=0}^{\nu-1}\pmb{x}^{S_{i,j}}c_{i,j}
 \end{aligned}
\]
where the functions $c_{i,j}(\pmb{x},\pmb{y},\pmb{w})$ are either zero or unities. We remark that:
\begin{align*}
S_\nu\phantom{j}&= \nu \boldsymbol{\alpha}_1\\
S_{i,j}&= j\boldsymbol{\alpha}_1+\pmb{r}_{i,j}\boldsymbol{A}_1\text{, for }j=0,\ldots,\nu-1.
\end{align*}
So, for a fixed $i$, each $S_\nu$ and $S_{i,j}$ is a sum in the rows of $\boldsymbol{A}_1$ with a different multiple of $\boldsymbol{\alpha}_1$. This means that the exponents $S_\nu$ and $S_{i,j}$ are all distinct because $\boldsymbol{\alpha}_1$ is linearly independent with the rows of $\boldsymbol{A}_1$. Therefore, for each $i$, the generator of $\mathcal{J}^*$ can only be one of the monomials $\pmb{x}^{S_\nu}$ or $\pmb{x}^{S_{i,j}}$. So, let $(i_1,j_1)$ be an index such that $S_{i_1,j_1}=\min\{S_\nu,S_{i,j}\}$ (where we consider, possibly, $(i_1,j_1) = (\nu,1)$). Then:
\[
\tau^\ast g_{i_1} =  \pmb{x}^{S_{i_1,j_1}}U
\]
where $U$ is a unit. In this case, it is clear that $\mathcal{I}_r$ is generated by $\pmb{x}^{S_{i_1,j_1}}$ and that the invariant $\nu$ has dropped to zero.
\section*{Acknowledgments}
I would like to thank Edward Bierstone for the useful suggestions and for reviewing the manuscript. The structure of this manuscript is strongly influenced by him. I would also like to express my gratitude to Daniel Panazzolo for the useful discussions concerning the problem and its applications. Finally, I would like to thank the anonymous reviewer for several very useful comments and, in particular, for suggesting a different title for the manuscript.
\bibliographystyle{alpha}

\begin{thebibliography}{999}
\bibitem{BeloT} A. Belotto da Silva (2013) \textit{Resolution of singularities in foliated spaces}, PhD thesis. Universit\'{e} de Haute-Alsace, France.
\bibitem{Bel1} A. Belotto, \textit{Global resolution of singularities subordinated to a $1$-dimensional foliation}, Journal of Algebra, Volume 447, 2016, Pages 397-423;
\bibitem{Bel2}  A. Belotto da Silva,
\textit{Local monomialization of a system of first integrals}, preprint (2014) arXiv:1411.5333 [math.CV]
\bibitem{AB} A. Belotto da Silva; E. Bierstone, V. Grandjean and P. Milman, \textit{Resolution of singularities of the cotangent sheaf of a singular variety}, preprint (2015) arXiv:1504.07280 [math.AG].

\bibitem{BM} E. Bierstone and P.D. Milman,
\textit{Canonical desingularization in characteristic zero by
blowing up the maximum strata of a local invariant},
Invent. Math. \textbf{128} (1997), 207--302.


\bibitem{BM2} E. Bierstone and P.D. Milman, \textit{Functoriality in resolution of singularities}, Publ. Res. Inst. Math. Sci. 44 (2008), no. 2, 609-639.

\bibitem{Can} F. Cano, \textit{Reduction of the singularities of codimension one singular foliations in dimension three}. Ann. of Math. (2) 160 (2004), no. 3, 907-1011.

\bibitem{Cut2} S. Cutkosky, \textit{Monomialization of morphisms from 3-folds to surfaces}. Lecture Notes in Mathematics, 1786. Springer-Verlag, Berlin, 2002.
\bibitem{Cut1} S. Cutkosky, \textit{Local Monomialization of Analytic Maps}, preprint (2015), arXiv:1504.01299 [math.AG]
\bibitem{Hir} H. Hironaka, \textit{Resolution of singularities of an algebraic variety over a field of characteristic zero}. I, II. Ann. of Math. (2) 79 (1964), 109-203; ibid. (2) 79 1964 205-326.
\bibitem{Hor} L. Hörmander, \textit{An introduction to complex analysis in several variables}, North-Holland Publishing Co. Amsterdam, 1973.
\bibitem{Ko} J. Koll\`{a}r, \textit{Lectures on resolution of singularities}. Annals of Mathematics Studies, 166. Princeton University Press, Princeton, NJ, 2007.
\bibitem{Mc} M. McQuillan, \textit{Canonical models of foliations}, Pure and applied mathematics quarterly, Vol. 4(3), 2008, pp. 877-1012.
\bibitem{McP} M. McQuillan and D. Panazzolo, \textit{Almost \'{E}tale resolution of foliations}. Journal of Differential Geometry 95, no. 2 (2013), 279-319.

\bibitem{Ste} P. Stefan, \textit{Accessibility and foliations with singularities}. Bull. Amer. Math. Soc. 80 (1974), 1142-1145.
\bibitem{Suss} H. Sussmann, \textit{Orbits of families of vector fields and integrability of distributions}. Trans. Amer. Math. Soc. 180 (1973), 171-188.
\bibitem{Pan2} D. Panazzolo, \textit{Resolution of singularities of real-analytic vector fields in dimension three}. Acta Math. 197 (2006), no. 2, 167-289.
\bibitem{Vil4} O. Villamayor, \textit{Constructiveness of Hironaka's resolution}. Ann. Sci. \'{E}cole Norm. Sup. (4) 22 (1989), no. 1, 1-32.
\bibitem{Vil2} O. Villamayor, \textit{Resolution in families}. Math. Ann. 309 (1997), no. 1, 1-19.

\end{thebibliography}

\end{document}